
%

\documentclass{amsart}

\usepackage{hyperref}
\usepackage{stmaryrd}

\newtheorem{theorem}{Theorem}[section]
\newtheorem{lemma}[theorem]{Lemma}
\newtheorem{corollary}[theorem]{Corollary}
\newtheorem*{claim}{Claim}
\newtheorem*{pp}{Proposition}

\theoremstyle{definition}
\newtheorem{definition}[theorem]{Definition}
\newtheorem{example}[theorem]{Example}
\newtheorem{p}[theorem]{Problem}
\newtheorem{prop}[theorem]{Proposition}

\theoremstyle{remark}
\newtheorem{remark}[theorem]{Remark}

\numberwithin{equation}{section}

\begin{document}

\title{Area-minimizing Hypersurfaces in Singular Ambient Manifolds}


\author{Yihan Wang}
\address{Peking University, Beijing, China}
\curraddr{}
\email{2001110016@pku.edu.cn}
\thanks{}

\def\ZZ{{\mathbb Z}}
\def\QQ{{\mathbb Q}}
\def\RR{{\mathbb R}}
\def\CC{{\mathbb C}}
\subjclass[2020]{28A75, 53A10, 53C23}

\keywords{Area-minimizing hypersurface, regularity,
tangent cone, nonnegative scalar curvature, singular ambient space.}

\date{}

\dedicatory{}

\begin{abstract}
We study area-minimizing hypersurfaces in singular ambient manifolds whose tangent cones have nonnegative scalar curvature on their regular parts. We prove that the singular set of the hypersurface has codimension at least 3 in our settings. We also give an example in which $n$-dimensional minimizing hypersurface in an ambient space as given above may have singularities of dimension $n-3$. This shows that our regularity result is sharp.
\end{abstract}

\maketitle

\section{Introduction}

Regularity of area-minimizing surfaces has been widely studied since the last century, particularly, regularity of minimizing hypersurfaces in a smooth ambient space was well understood, and it has been proved (see \cite{Fleming1962},\cite{Simons1968},\cite{Bombieri1969},\cite{Federer1970}) that singularities of minimizing hypersurfaces are of codimension at least 7 when the ambient space is a smooth $(n+1)$-dimensional manifolds, so when $n\geq 7$, a minimizing hypersurface may have singularities. Recently, regularity of generic minimizing hypersurfaces have been studied and important results have been established when $n$ is not more than $9$ thanks to works of \cite{Chodosh2023new} and \cite{Chodosh2023} (also see \cite{Smale1993}). We also like to mention that the theory of minimizing hypersurfaces have had many important applications, most notably, it has been used in proving positive mass theorems (see \cite{Yau1979}, \cite{Schoen1979} and \cite{Yau2017}).

In this paper, we will generalize classical regularity theory for area-minimizing hypersurfaces to the case of singular ambient spaces. We will see that because of presence of singularity in ambient spaces, regularity of minimizing hypersurfaces differs from the smooth case in a substantial way.

Let us start with a well-known example. Consider a 2-dimensional cone $C$ with an isolated singular point $O$ (the vertex) whose cross section is a closed Jordan curve $\gamma$. Given any two points $p,q$ on $\gamma$, if the length of $\gamma$ is strictly less than $2\pi$, the minimizing geodesic joining $p$ and $q$ is smooth, that is, it does not pass through the singular point $O$. How about area-minimizing hypersurfaces in spaces with conic singularity? Note that the 2-dimensional cone $C$ is developable, so it is flat, and it is the curvature of the cross section of $C$ that counts in this minimizing problem. Therefore, a natural generalization is to consider whether an area-minimizing hypersurface in an $(n+1)$-dimensional cone contains the singular vertex of the cone, assuming some additional conditions on the ambient space. As a first step, we will discuss whether there exists area-minimizing hypercones in 3-dimensional cones. And we will prove a new nonexistence theorem of such 2-dimensional minimizing hypercones assuming the ambient space has nonnegative scalar curvature (see Theorem \ref{thm3.2}).

Recall that the non-existence of stable minimal hypercones in $\RR^{n+1}$ $(n\le 6)$ plays an important role in classical regularity theory of area-minimizing hypersurfaces (see \cite{Simons1968}). Here, analogously, we can obtain a regularity result for area-minimizing hypersurfaces in singular ambient spaces, using Federer’s dimension reduction argument. However, some other difficulties arise in this process when generalizing classical results from smooth ambient spaces to singular ambient spaces. So modifications are necessary here, for instance, the compactness (Lemma \ref{cpt}), the monotonicity formula (Lemma \ref{mono}), the density estimates (Lemma \ref{lowerbound}), and so on. These results will be proved using the language of integral currents in geometric measure theory (see \cite{Simon1983}) in the following sections.  

These new challenges lead us to introduce the following class $\mathcal{F}=\cup_n\mathcal{F}_n$ of singular spaces which are natural extensions of 2-dimensional flat cones and 3-dimensional scalar nonnegative cones. We will introduce a family ${\mathcal F}_n$ of admissible spaces which may have singularities. First ${\mathcal F}_n$ contains all the smooth Riemannan manifolds of dimension $n+1$ as well as all the metric cones $C(\Sigma)$ of dimension $n+1$ such that its vertex $O$ is the only singularity and its scalar curvature on the regular part $C(\Sigma)\backslash \{O\}$ is non-negative. (If $n=1$, the cone angle is assumed to be no larger than $2\pi$, additionally.) By a metric cone, we mean a complete metric space $C$ which is isometrically embedded into some Euclidean space and satisfies: $\eta_{0,\lambda,\#}C=C$ for all $\lambda>0$, where $\eta_{p,\lambda,\#}$ is the scaling (see Section 2.3). Of course, if $C$ is a $k$-dimensional cone ($k< n+1$) with isolated singularity and non-negative scalar curvature outside the vertex, then $\RR^{n+1-k}\times C$ belongs to ${\mathcal F}_n$. Then we can define inductively other admissible spaces. Suppose that we have defined ${\mathcal F}_{n-1}$, then we can define ${\mathcal F}_n$ as follows: For any $M'\in {\mathcal F}_{n-1}$, the product $\RR\times M'\in {\mathcal F}_n$ is admissible. We say that $M\in\mathcal{F}_n$ if, roughly speaking, $M$ has scalar nonnegative tangent cones with good structures, and there are estimates on the densities and singular sets of $M$. The precise definition of $\mathcal{F}_n$ will be given in the next section (see Definition \ref{admissible}).

It is not hard to see that admissible spaces in ${\mathcal F}$ include a large number of RCD($0,N$) metric measure spaces, projective varieties with only the klt singularities as well as Gromov Hausdorff limit spaces of Riemannian manifolds with uniformly bounded Ricci curvature. If the elements of $\mathcal{F}_n$ are all area-minimizing hypersurfaces of $(n+2)$-manifolds, then almost all the conditions are satisfied, e.g., if $\Sigma$ is a minimizing hypersurface in $\RR^8$, then classical result implies that $\Sigma $ can only have isolated singularities and satisfies the hypotheses on tangent cones in the definition of $\mathcal{F}_6$.

The following regularity theorem is for singular ambient spaces and will be our main result:
\begin{theorem}\label{main1}
For $M^{n+1}\in\mathcal{F}_n$ (see Definition \ref{admissible}), let $\Sigma^n$ be an area-minimizing hypersurface in $M^{n+1}$. If (6a) in Definition \ref{admissible} holds, then the singular points of $\Sigma^n$ form a set of Hausdorff dimension at most $n-3$. In particular, $\Sigma^2$ is always smooth in $M^3$.
\end{theorem}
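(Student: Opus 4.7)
The plan is to adapt Federer's classical dimension reduction argument to the present singular ambient setting, using the modified compactness (Lemma~\ref{cpt}), monotonicity formula (Lemma~\ref{mono}) and density lower bound (Lemma~\ref{lowerbound}) as substitutes for their Euclidean analogues, together with the inductive structure of $\mathcal{F}$ to propagate admissibility through blow-ups. The decisive new input is the nonexistence of $2$-dimensional area-minimizing hypercones inside $3$-dimensional ambient cones with nonnegative scalar curvature on the regular part, i.e.\ Theorem~\ref{thm3.2}.

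The first step is to extract tangent cones. At any singular point $p$ of $\Sigma$, I would form the blow-up sequence $(\eta_{p,\lambda_k})_{\#}\Sigma$ with $\lambda_k \to 0$. Lemma~\ref{cpt} gives subsequential convergence to an integer-multiplicity area-minimizing current $C$, and Lemma~\ref{mono} forces $C$ to be a cone. Condition~(6a) of Definition~\ref{admissible}, together with the inductive definition of $\mathcal{F}$, ensures that the ambient tangent cone at $p$ lies in $\mathcal{F}_n$, so $C$ is a minimizing hypercone inside an admissible cone. The lower density bound from Lemma~\ref{lowerbound} prevents $C$ from being trivial.

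Arguing by contradiction, suppose $\dim_{\mathcal H}(\mathrm{sing}\,\Sigma) > n-3$. A standard Federer iteration, passing to successive tangent cones at points of maximal-dimensional singular stratum and using the compactness and monotonicity above at each stage, then produces an area-minimizing hypercone $C'$ in an admissible ambient cone whose singular set has dimension at least $n-2$. Translation invariance along these directions, deduced as usual from monotonicity applied at two distinct cone vertices, lets me split off a Euclidean factor and write $C' = \RR^{n-2} \times C''$ inside $\RR^{n-2} \times M''$, where $M'' \in \mathcal{F}_2$ is a $3$-dimensional metric cone with isolated vertex singularity and nonnegative scalar curvature on its regular part, and $C''$ is a $2$-dimensional area-minimizing hypercone in $M''$. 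Theorem~\ref{thm3.2} rules out such a $C''$, a contradiction. Hence $\dim_{\mathcal H}(\mathrm{sing}\,\Sigma) \leq n-3$; for $n=2$ this gives $\mathrm{sing}\,\Sigma^2 = \emptyset$.

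The main obstacle will be verifying that each ingredient of Federer's machinery survives the singularity of $M$. One has to check that (i) blow-up limits of $\Sigma$ remain area-minimizing in the blow-up of $M$, (ii) monotonicity and the density lower bound are uniform near singular points of $M$ and not only near its smooth points, and (iii) iterated tangent cones of the ambient space stay inside $\mathcal{F}$ so that the dimension reduction remains within the class; it is precisely to guarantee (iii) that condition~(6a) of Definition~\ref{admissible} is imposed and that $\mathcal{F}_n$ is defined by induction on $n$. Once these foundations are in place, the argument collapses to a single invocation of Theorem~\ref{thm3.2}.
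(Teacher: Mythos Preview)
Your strategy matches the paper's: Federer dimension reduction in the singular setting, terminating in Theorem~\ref{thm3.2}. But there is a real gap in the first step. You assert that after one blow-up of $\Sigma$ at $p$, ``Lemma~\ref{mono} forces $C$ to be a cone.'' Lemma~\ref{mono}, however, gives monotonicity only for a minimizer inside an ambient \emph{metric cone}, and only at its vertex; there is no monotonicity formula for $\Sigma$ in the original $M\in\mathcal{F}_n$, and the paper is explicit that none is available (``a monotonicity formula can only be established for metric cones''). Hence the first blow-up limit is merely an area-minimizing current $\Sigma_\infty$ in the ambient tangent cone $C_p$ (via Lemma~\ref{cpt}), with no reason to be conical. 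The paper remedies this by blowing up \emph{again}: once inside $C_p$, Lemma~\ref{mono} applies at the vertex, and a second blow-up of $\Sigma_\infty$ at $0$ yields the minimizing cone (Corollary~\ref{cor2}). Your caveat~(ii) senses the issue but misdiagnoses it as a uniformity check; without the double blow-up the tangent-cone step, and hence the whole reduction, never gets started.

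A secondary point: the splitting $C'=\RR^{n-2}\times C''$ inside $\RR^{n-2}\times M''$ is not achieved in one shot. The paper peels off one $\RR$-factor at a time, alternating between splitting the ambient tangent cone (Lemma~\ref{cylinder}, built into condition~(5) of Definition~\ref{admissible}) and splitting the minimizer (monotonicity at two spine points, then Proposition~\ref{split}), while Lemma~\ref{limsing} and Corollary~\ref{exist} carry the positive $\mathcal{H}^{n-3+d}$-measure of $\mathrm{Sing}(\Sigma)\cap\mathrm{Sing}(M)$ through each limit. Also note that the paper first disposes of $\mathrm{Sing}(\Sigma)\cap\mathrm{Reg}(M)$ by the classical codimension-$7$ bound, so the reduction is run only on $\mathrm{Sing}(\Sigma)\cap\mathrm{Sing}(M)$; your sketch runs it on all of $\mathrm{Sing}(\Sigma)$, which would require the ambient tangent cone at a \emph{regular} point of $M$ to play along.
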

\begin{remark}
    Since the condition (6a) is equivalent to saying that $M$ itself has codimension 3 singularities, the theorem demonstrates that the minimizing hypersurface inherits this codimension 3 property from the ambient space, which, consequently, makes it possible to start an induction process when considering a sequence of dimension decreasing minimizing hypersurfaces.
    
    If (6b) holds instead of (6a) in Definition \ref{admissible}, then the Hausdorff dimension of $\text{Sing}(\Sigma^n)$ is no larger than $n-2$, respectively. Moreover, if we drop the assumption (7) in the definition of $M^{n+1}\in \mathcal{F}$, then the Hausdorff dimension bound still holds, but for singular points with positive lower densities. See the discussion in Section 2.
\end{remark}

The study of sets of finite perimeter, especially minimizing boundaries, in singular ambient spaces has been carried out in \cite{Ambrosio2019},\cite{Mondino2021},\cite{Kinnunen2013}, focusing on the so called RCD($K,N$) spaces. And recently in \cite{Francesco2023}, the authors establish a monotonicity formula for perimeter minimizing sets in RCD($0,N$) metric measure cones and generalize some classical regularity results to RCD spaces. Compared to these works, our results require nonnegative scalar curvatures instead of Ricci, and therefore can be extended to a large family of metric measure spaces. However, our methods only work for metric spaces that can be isometrically embedded into some larger Euclidean spaces, since the compactness of minimizing currents are based on classical (extrinsic) geometric measure theory (see Lemma \ref{cpt} and \cite{Simon1983}).  

Our next step is to consider non-existence of minimizing hypercones in higher dimensional singular spaces, in order to improve the regularity results above. However, we will show that our bound for dimension in Theorem \ref{main1} is actually optimal by the following concrete example which was first pointed out by Frank Morgan in \cite{Morgan2002}:
\begin{example}\label{main2}
    If $S^n(\lambda)$ is the standard sphere $S^n$ with radius $\lambda \le 1$, $C(S^n(\lambda))$ is the cone with vertex $O$ and cross section $S^n(\lambda)$, $C(S^{n-1}(\lambda))$ is the totally geodesic hypercone in $C(S^n(\lambda))$, then $C(S^{n-1}(\lambda))$ is area-minimizing in $C(S^n(\lambda))$ if and only if $\frac{n\lambda}{2\sqrt{n-1}}\geq 1$.
\end{example}
\begin{remark}
     If $\lambda=1$, then $C(S^n(\lambda))$ is just $\mathbb{R}^{n+1}$. Therefore, this theorem tells us that $C(S^1(\lambda))$ is not area-minimizing in $C(S^2(\lambda))$ for any $\lambda \le 1$ except for the trivial case $\lambda=1$. However, $C(S^{n-1}(\lambda))$ can be area-minimizing in $C(S^n(\lambda))$ if $n\geq 3$ and $\lambda$ is sufficiently close to 1.
\end{remark}

These facts help us understand more about the regularity of area-minimizing hypersurfaces in singular ambient spaces.

Now we briefly explain the main idea of proofs. We start with a conical manifold having an isolated singular point. And then we apply a Hardy-Littlewood-Sobolev type inequality to the stability inequality in the dimension two case (Section 3). By direct computation and detailed analysis, we prove the non-existence of minimizing hypercones in 3-dimensional scalar nonnegative ambient cones with isolated singularities. This result is new in our settings, to the best of our knowledge. Finally, we use standard blow-up arguments (see \cite{Federer1970},\cite{Giusti1984},\cite{Simon1983}) in general cases (Section 4) and result in Theorem \ref{main1}. Note that in the case of singular ambient spaces, a monotonicity formula can only be established for metric cones. Therefore we have to iterate the blow-up procedure in order to obtain local regularity results. And such modification of Federer's dimension reduction argument is similar to the one in \cite{Francesco2023}.

As for Example \ref{main2}, we will give an alternative proof closely related to our discussion of minimizing cones in saclar nonnegative spaces, when $2\le n\le 6$. Indeed, since $S^n$ is rotationally symmetric, we can reduce the problem to an ODE by the moving plane method (see \cite{Alexandrov1962},\cite{Schoen1983}). And the behavior of the ODE (Section 5) gives the exact bound in Example \ref{main2}.





\section{Preliminaries}
In this part, we recall classical results of area-minimizing hypersurfaces in geometric measure theory and extend some of them to singular ambient spaces. And we give the precise definition of $\mathcal{F}=\cup_n\mathcal{F}_n$ ($\mathcal{F}_n$ is the class of admissible spaces mentioned in the introduction) which we are interested in. 

\begin{definition}\label{admissible}
    We define $\mathcal{F}_1$ to be the class of all 2-dimensional smooth manifolds as well as flat cones whose cone angles are less than $2\pi$. And then we say $M\in {\mathcal F}_n(n\geq 2)$ if it satisfies:\\
\vskip 0.1in
\noindent
(1) $M$ is the metric completion of a $(n+1)$-dimensional orientable Riemannian manifold $(M_0, g)$ isometrically embedded in some Euclidean space $\RR^{n+l}$. And the singular set of $M$ has $\mathcal{H}^n$ measure $0$, where $\mathcal{H}^n$ is the $n$-dimensional Hausdorff measure. (See also Definition \ref{singular}.)\\
\vskip 0.1in
\noindent
(2) Existence of tangent cones, that is, for any $p\in M$ and $\{\lambda_i\}\rightarrow 0$, there exists a subsequence $\{\lambda_i\}$ (not labeled) such that $\eta_{p,\lambda_i,\#}M$ converge (in the sense of integral currents) to a metric cone $C$, i.e., $\eta_{0,\lambda,\#}C=C$, $\forall \lambda>0$. In addition, the regular part of $C$ is connected and orientable. Such a $C$ is referred as a tangent cone of $M$ at $p$.\\
\vskip 0.1in
\noindent
(3) For any tangent cone $C$ of $M$, the scalar curvature is non-negative on its regular part, i.e.,
$\text{Scal}_C(q)\geq 0$ for any $q$ outside the singular set of $C$.\\
\vskip 0.1in
\noindent
(4) (Locally $C^1$ convergence to tangent cones) In (2), the regular part of $\eta_{p,\lambda_i,\#}M$ converges to the tangent cone $C$ in the $C^1$ sense locally away from the singular set of $C$.\\
\vskip 0.1in
\noindent
(5) For any tangent cone $C$ of $M$, if $p\in C\backslash \{0\}$ and $\lambda_i\mapsto 0$, then by taking a subsequence if necessary, $\eta_{p,\lambda_i,\#}C$ converge to a tangent cone $C'$ of $C$ at $p$. Then $C'$ is of the form $\RR \times \bar C$, and we require $\bar C\in {\mathcal F}_{n-1}$.\\
\vskip 0.1in
\noindent
(6) The density of $M$ has a uniform upper bound. That is, $$\Theta_M\,:=\sup_{x\in M, r>0}\frac{\mu_M(B_r(x))}{r^{n+1}}<\infty,$$
where $\mu_M$ is the induced measure on $M$.
And at least one of the following holds:\\
\textbf{a.} The singular set $\text{Sing}(C)$ of the tangent cone $C$ of $M$ has its Hausdorff dimension no larger than $n-2$.\\
\textbf{b.} The tangent cone $C$ of $M$ satisfies the following density bound for $p\in\text{Sing}(C)$:
$$\Theta_{C}(p)\,:=\,\overline{\lim_{r \to 0}}\, \frac{\mu(B_r(p))}{\omega_{n+1} r^{n+1}}\,\le\, 1-\epsilon,$$
where $\mu$ is the induced measure on $C$, $\omega_{n+1}$ is the volume of the unit ball in $\RR^{n+1}$ and $\epsilon > 0$ is an uniform constant, particularly, it is independent of $C$ and $p$.\\
\vskip 0.1in
\noindent 
(7) The induced measure $\mu_M$ on $M$ is doubling, and $M$ supports a $(1,1)$-Poincare inequality. (See Definition \ref{doubling}.)\\  
\end{definition}

\subsection{Area-minimizing Currents}
Let $U$ be an open subset of $\mathbb{R}^{k+l}$, $\mathbb{I}_k(U)$ be the set of integral $k$-currents in $U$. And suppose $A\subseteq U$ is any fixed subset, $T\in \mathbb{I}_k(U)$.

\begin{definition}
    We say that $T$ is minimizing in $A$ if
    $$\mathbb{M}_W(T)\le \mathbb{M}_W(S),$$
    whenever $W\subset\subset U$, $\partial S=\partial T$ in $U$, and spt$(S-T)$ is a compact subset of $A\cap W$. Here, $\mathbb{M}_W(T)$ is the mass of $T$ in $W$.
\end{definition}

\begin{remark}
    Classical theory for minimizing currents focuses on the case $A=U$ or $A=N\cap U$ for some smooth manifold $N$ embedded in $\mathbb{R}^{k+l}$. And in the following discussions, we will allow $A$ to be a manifold with singularities.
\end{remark}

\subsection{Riemannian Manifolds with Singularities}
Recall that when $T$ is an integral $k$-current, we define the singular and regular part of $T$ as follows:
$$\text{spt}(T)\setminus\text{spt}(\partial T)=\text{Reg}(T)\cup \text{Sing}(T),$$
$$\text{Reg}(T)=\{x \mid \text{spt}(T) \text{ is a }k\text{-dimensional submanifold in some neighborhood of }x\},$$
$$\text{Sing}(T)=\text{spt}(T)\setminus(\text{spt}(\partial T)\cup\text{Reg}(T)).$$
It is well-known that if $T\in\mathbb{I}_k$ is minimizing in $A$, and $A$ is a smooth $(k+1)$-dimensional manifold, then the Hausdorff dimension of $\text{Sing}(T)$ is no larger than $k-7$. And our main goal is to extend the regularity result to the case when $A$ has singularities.

\begin{definition}\label{singular}
    We say that $(M,g)$ is an $n$-dimensional (orientable) manifold with a singular set $S$, if $M$ is a multiplicity one integral current in some $\mathbb{R}^{n+l}$ with singular set $\text{Sing}(M)=S$, $\mathcal{H}^{n-1}(S)=0$, and $\text{Reg}(M)$ is an $n$-dimensional (orientable) Riemannian manifold endowed with the metric $g$ induced from the Euclidean metric of $\RR^{n+l}$. (Here $M$ is also viewed as a subset of $\mathbb{R}^{n+l}$ by abuse of notation.)
\end{definition}

\subsection{Tangent Cones and Densities}
Let $T\in\mathbb{I}_k$, $x\in$ spt$(T)$, and $\{\lambda_i\}$ be a sequence of positive numbers decreasing to $0$. Then we can define the "blow-up" of $T$ at $x$ as follows:\par
Let $\eta_{x,\lambda_i}(y)=\frac{y-x}{\lambda_i}$ be a dilation of $\mathbb{R}^{k+l}$, and consider a sequence of the push forward currents $(\eta_{x,\lambda_i})_{\#}T$. After passing to a subsequence (not labeled), we obtain:
$$(\eta_{x,\lambda_i})_{\#}T\rightharpoonup C\in\mathbb{I}_k,\quad i\rightarrow\infty,$$
where the convergence is in the weak sense, and $C$ is called the $tangent$ $cone$ of $T$ at $x$. Note that the existence of tangent cones of $T$ is guaranteed by certain compactness theorem for integral currents in classical geometric measure theory, provided $T$ satisfies some boundedness conditions. However, the uniqueness of tangent cones at a given point remains open, even if $T$ is assumed to be area-minimizing. We refer the reader to \cite{Simon1983} for more background knowledge.\par
Another important concept is the density function and the (upper) density of a current $T$ at the point $x\in\text{spt}(T)\setminus\text{spt}(\partial T)$:
$$\Theta_T(x,r)=\Theta_T^k(x,r)=\frac{\mathbb{M}(T\llcorner B_r(x))}{\omega_kr^k},\quad 0<r<\text{dist}(x,\text{spt}(\partial T)).$$
$$\Theta_T(x)=\limsup_{r\rightarrow0}\Theta_T(x,r).$$

\subsection{Compactness of Minimizing Currents}
Recall that an important step in the classical regularity theory is the compactness theorem of minimizing currents. Namely, if $T_j\in\mathbb{I}_k$ are minimizing currents in $A_j$, and $T_j\rightharpoonup T$, $A_j\rightharpoonup A$, as $j\rightarrow\infty$, then $T$ is also minimizing in $A$. However, this compactness result is only proved for $A_j$, $A$ being $C^1$ embedded submanifolds, and $A_j$ $C^1$-converging to $A$. Therefore, we need to extend it to the case when singularities occur in ambient spaces.

We will focus on the multiplicity one class and prove the following compactness lemma for minimizing boundaries:

\begin{lemma}\label{cpt}
    Suppose $A_j\subseteq \mathbb{R}^{k+l}$ is a sequence of $(k+1)$-dimensional manifolds with singularities converging to some $A$ in the current sense. And assume that the convergence is locally $C^1$ away from $S=$Sing$(A)$. In addition, suppose there exists a uniform constant $\Theta$ such that:
    $$\Theta_{A_j}(x,r)\le\Theta,\quad \forall j\in\mathbb{N}_+,\: x\in A_j,\:r>0.$$
    If $T_j=\partial\llbracket E_j\rrbracket$ is a sequence of multiplicity one $k$-currents with spt$(T_j)\subseteq A_j$, each $T_j$ minimizing in $A_j$, and $E_j\subseteq A_j$ are Caccioppoli sets, $i.e.$, $\chi_{E_j}$ has locally bounded variation in $A_j$ ($T_j$ is called a minimizing boundary),
    then there exists $T=\partial\llbracket E\rrbracket\in I_k$ for some Caccioppoli set $E\subseteq A$, spt$(T)\subseteq A$, such that $T_j\rightharpoonup T$, $\mu_{T_j}\rightharpoonup\mu_T$, and $T$ is minimizing in $A$.
\end{lemma}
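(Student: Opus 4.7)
The plan is to proceed in three stages: extract a subsequential limit $T=\partial\llbracket E\rrbracket$ via BV compactness, establish minimality by constructing competitors in $A_j$ from a given competitor in $A$, and then deduce convergence of the mass measures from the minimality of the limit. For compactness I first note that the ambient density bound $\Theta_{A_j}\le\Theta$, combined with the minimality of $T_j$, yields a uniform local mass bound: for a.e.\ $r$, the cut-and-fill comparison $E_j\mapsto E_j\cup(B_r(x)\cap A_j)$ gives
$$\mathbb{M}(T_j\llcorner B_r(x))\le\mathcal{H}^k(A_j\cap\partial B_r(x)),$$
and the coarea identity $\int_0^r\mathcal{H}^k(A_j\cap\partial B_s(x))\,ds=\mu_{A_j}(B_r(x))\le\Theta r^{k+1}$ upgrades this to $\mathbb{M}(T_j\llcorner B_r(x))\le C(\Theta)r^k$. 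Together with the $L^1_{\mathrm{loc}}$ bound on $\chi_{E_j}$, BV compactness gives a subsequence and a Caccioppoli set $E\subseteq A$ with $\chi_{E_j}\to\chi_E$ in $L^1_{\mathrm{loc}}$; hence $T_j\rightharpoonup T:=\partial\llbracket E\rrbracket$, $\mathrm{spt}(T)\subseteq A$, and lower semicontinuity of mass is automatic.

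The heart of the argument---and where I expect the main obstacle---is the competitor construction, since the $C^1$ convergence $A_j\to A$ is only available away from $S:=\mathrm{Sing}(A)$, so the classical device of pushing forward by a single $C^1$ diffeomorphism $A\to A_j$ breaks down. Given any competitor $E'$ for $E$ with $E'\triangle E\subset\subset W\cap A$, fix $\epsilon>0$ and cover $S\cap\overline{W}$ by an open neighborhood $N_\epsilon$ with $\mu_A(N_\epsilon)<\epsilon$; a coarea/slicing argument over tubular distances lets me further arrange that $\mathcal{H}^k(\partial N_\epsilon\cap A)\to 0$ as $\epsilon\to 0$. On $\overline{W}\setminus N_\epsilon$, which stays at positive distance from $S$, the locally $C^1$ convergence produces, for $j$ large, diffeomorphisms $\phi_j$ from $A\cap(\overline{W}\setminus N_\epsilon)$ into $A_j$ with $\phi_j\to\mathrm{id}$ in $C^1$. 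I then set $E'_j$ equal to $\phi_j(E')$ on $A_j\setminus N_\epsilon^{(j)}$ and equal to $E_j$ inside $N_\epsilon^{(j)}$, and use a slicing along $\partial W$ (exploiting $\chi_{E_j}\to\chi_E$ in $L^1_{\mathrm{loc}}$) to force $E'_j\triangle E_j\subset\subset W\cap A_j$ with interface cost $o_j(1)$.

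The mass $\mathbb{M}_W(\partial\llbracket E'_j\rrbracket)$ then splits into three pieces: (i) the pushforward $\phi_{j\#}(\partial\llbracket E'\rrbracket\llcorner(W\setminus N_\epsilon))$, whose mass converges to $\mathbb{M}(\partial\llbracket E'\rrbracket\llcorner(W\setminus N_\epsilon))$ by $C^1$-closeness of $\phi_j$ to the identity; (ii) the piece $\partial\llbracket E_j\rrbracket\llcorner N_\epsilon^{(j)}$, bounded by $C(\Theta)\mu_{A_j}(N_\epsilon)\le C\epsilon$ via the uniform density estimate from step one; and (iii) an interface contribution bounded by $\mathcal{H}^k(\partial N_\epsilon\cap A)+o_j(1)$. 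Combining with the minimality of each $T_j$ in $A_j$ gives
$$\mathbb{M}_W(T)\le\liminf_j\mathbb{M}_W(T_j)\le\liminf_j\mathbb{M}_W(\partial\llbracket E'_j\rrbracket)\le\mathbb{M}_W(\partial\llbracket E'\rrbracket)+C\epsilon+C\mathcal{H}^k(\partial N_\epsilon\cap A),$$
and letting $\epsilon\to 0$ yields the minimality of $T$ in $A$. Finally, applying the same competitor construction with $E'=E$ gives $\limsup_j\mathbb{M}_W(T_j)\le\mathbb{M}_W(T)$, hence $\mathbb{M}_W(T_j)\to\mathbb{M}_W(T)$ and so $\mu_{T_j}\rightharpoonup\mu_T$.
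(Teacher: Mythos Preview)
Your overall strategy matches the paper's: excise a small neighborhood $N_\epsilon$ of $S=\mathrm{Sing}(A)$, use the $C^1$ convergence to transport competitors between $A$ and $A_j$ outside $N_\epsilon$, control the excision cost via coarea/slicing, and deduce measure convergence by rerunning the same argument with $E'=E$. The paper's write-up is more elaborate (it pushes $T_j$ forward to $A$ rather than $E'$ back to $A_j$, and does explicit slicing on nested sets $W_\alpha$), but the skeleton is the same.

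There is, however, a real gap in your estimate for piece (ii). You assert $\mathbb{M}(T_j\llcorner N_\epsilon)\le C(\Theta)\,\mu_{A_j}(N_\epsilon)$, but this is dimensionally wrong: your cut-and-fill bound gives $\mathbb{M}(T_j\llcorner B_r)\le C r^k$, a $k$-dimensional estimate, whereas $\mu_{A_j}$ is a $(k{+}1)$-dimensional measure, and there is no general inequality bounding the perimeter in a region by the ambient volume of that region. What is missing is the hypothesis $\mathcal{H}^k(S)=0$ built into Definition~\ref{singular}. With it, cover $S\cap\overline{W}$ by finitely many balls $B_{r_i}$ with $\sum r_i^k<\varepsilon_0$ and take $N_\epsilon$ inside their union; then your own density bound yields $\mathbb{M}(T_j\llcorner N_\epsilon)\le C\sum r_i^k\le C\varepsilon_0$. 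The same covering, combined with the ambient bound $\Theta_{A_j}\le\Theta$ and coarea, is also what makes the interface control $\mathcal{H}^k(\partial N_\epsilon\cap A_j)\le C\varepsilon_0$ work; merely demanding $\mu_A(N_\epsilon)<\epsilon$ is not enough for either (ii) or (iii). Alternatively, since $E'_j=E_j$ on $N_\epsilon$, piece (ii) can be cancelled against $\mathbb{M}_{N_\epsilon}(T_j)$ on the left, after which letting $\epsilon\to 0$ requires $\mu_T(S)=0$---again a consequence of $\mathcal{H}^k(S)=0$ and the rectifiability of $T$. The paper organizes its proof around precisely this ball covering of $S$ from the start.
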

\begin{proof}

    The existence of the weak limit $T=\partial\llbracket E\rrbracket\in \mathbb{I}_k(A\setminus\text{Sing}(A))$ follows from the standard compactness theorem for minimizing boundaries. We only need to prove that $T$ is also minimizing in $A$. And the proof is a modification of the one in \cite{Simon1983}.
    
    Let $U=B_{2R}$ be an open ball in $\RR^{k+l}$, $K\subseteq B_R$ be an arbitrary compact set and choose a smooth $\varphi$ : $U\rightarrow[0,1]$ such that $\varphi=1$ in some neighborhood of $K$, and spt$(\varphi)\subseteq \{x\in U \mid \text{dist}(x, K)<\eta\}$, where $\eta<\text{dist}(K, \partial U)$ is arbitrary. For $0\le\lambda<1$, let 
    $$W_{\lambda}=\{x\in U \mid \varphi(x)>\lambda\}.$$
    Then $K\subseteq W_{\lambda}\subset\subset U$, for each $0\le\lambda<1$.
    
    Since $\mathcal{H}^{k}(S\cap \overline{B_R})=0$, for every $\varepsilon_0>0$, there exists a covering of $S\cap \overline{B_R}$ which consists of finitely many balls $\{B_{2r_i}(p_i)\}_{i\in I}$ with radius $2r_i<\varepsilon_0$ such that
    $$\sum_{i\in I}\omega_k2^kr_i^k\le \varepsilon_0.$$
    Then by the covering lemma, we can choose a collection of balls $\{B_{2\varepsilon}(p_i')\}_{i\in I'}$ for any $\varepsilon<\min_{i\in I} \{r_i\}$, such that $\{B_{2\varepsilon}(p_i')\}_{i\in I'}$ are disjoint, and $\{B_{4\varepsilon}(p_i')\}_{i\in I'}$ cover $S\cap \overline{B_R}$. Thus, 
    $$\bigcup_{i\in I'}B_{2\varepsilon}(p_i')\subseteq\bigcup_{i\in I}B_{4r_i}(p_i),$$
    and we have:
    $$\sum_{i\in I'}\omega_k2^k\varepsilon^k\le\sum_{i\in I}\omega_k4^kr_i^k\le 2^k\varepsilon_0.$$
    
    Without loss of generality, assume $B_{2\varepsilon}(p_i')\cap S\cap \overline{B_R}\neq \emptyset$, $\forall i\in I'$. Then we can choose $q_i\in B_{2\varepsilon}(p_i')\cap S\cap \overline{B_R}$ such that:
    $$\bigcup_{i\in I'}B_{6\varepsilon}(q_i)\supseteq U_{2\varepsilon}:= \{x\in U\mid \text{dist}(x,S\cap \overline{B_R})<2\varepsilon\}.$$
    Therefore,
    $$\mu_{A_j}(U_{2\varepsilon})\le\sum_{i\in I'}\mu_{A_j}(B_{6\varepsilon}(q_i))\le\Theta\sum_{i\in I'}\omega_{k+1}6^{k+1}\varepsilon^{k+1}\le C\varepsilon_0\varepsilon,$$
    where $C$ is a constant depending only on $k$ and $\Theta$.
    
    Now since $A_j$ converges to $A$ locally away from $S=\text{Sing}(A)$, we can find for $V_{\varepsilon}$ and $A_j$ ($j$ sufficiently large) a diffeomorphism $\psi_{\varepsilon,j}$: $V_{\varepsilon}\rightarrow V_{\varepsilon}$, such that $\psi_{\varepsilon,j}$ gives a diffeomorphism from $A_j\cap V_{\varepsilon}$ onto $A\cap V_{\varepsilon}$, and $\psi_{\varepsilon,j}\rightarrow id_{V_{\varepsilon}}$, as $j\rightarrow\infty$, in the $C^1$ sense. Therefore, since $T_j\rightharpoonup T$, we also have $\psi_{\varepsilon,j,\#}T_j\rightharpoonup T$ in $V_{\varepsilon}$, as $j\rightarrow\infty$. By the equivalence of flat metric convergence and weak convergence, we know that $d_W(T, \psi_{\varepsilon,j,\#}T_j)\rightarrow0$ for each $W\subset\subset V_{\varepsilon}$ ($d_W$ is the flat metric distance), hence in particular,
    $$T-\psi_{\varepsilon,j,\#}T_j=\partial R_{\varepsilon,j},\quad R_{\varepsilon,j}=\llbracket E\rrbracket-\psi_{\varepsilon,j,\#}\llbracket E_j\rrbracket.$$
    And since $\chi_{\psi_{\varepsilon,j}(E_j)}\rightarrow \chi_{E}$ in $L_{loc}^1(V_{\varepsilon})$ by the classical theory for Caccioppoli sets (see \cite{Simon1983}), we have:
    $$\mathbb{M}_W(R_{\varepsilon,j})\rightarrow 0,\quad\forall W\subset\subset V_{\varepsilon}.$$

    From now on, unless otherwise noted, all the currents are considered to be in $V_{\varepsilon}$ rather than $U$.
    
    By the slicing theory, we can choose $0<\alpha<1$ and a subsequence (not labeled) such that
    $$\partial(R_{\varepsilon,j}\llcorner W_{\alpha})=(\partial R_{\varepsilon,j})\llcorner W_{\alpha}+P_{\varepsilon,j},$$
    where spt$(P_{\varepsilon,j})\subseteq \partial W_{\alpha}$ and $\mathbb{M}(P_{\varepsilon,j})\rightarrow0$. Futhermore, we can also choose $\alpha$ such that 
    \begin{equation}\label{alpha}
    \mathbb{M}(T\llcorner \partial W_{\alpha})=0,\quad \mathbb{M}((\psi_{\varepsilon,j,\#}T_j)\llcorner \partial W_{\alpha})=0, \quad \forall j.
    \end{equation}
    Thus, putting these together we obtain:
    $$T\llcorner W_{\alpha}=(\psi_{\varepsilon,j,\#}T_j)\llcorner W_{\alpha}+\partial (R_{\varepsilon,j}\llcorner W_{\alpha})+P_{\varepsilon,j}, \quad \mathbb{M}(R_{\varepsilon,j}\llcorner W_{\alpha})+\mathbb{M}(P_{\varepsilon,j})\rightarrow0.$$\par
    
    Now let $X=\partial\llbracket F\rrbracket\in \mathbb{I}_k(U)$ with $\partial (X-T)=0$ and spt$(X-T)\subseteq K\cap A$. We need to prove $$\mathbb{M}_{W_{\alpha}}(T)\le \mathbb{M}_{W_{\alpha}}(T+X).$$
    By our construction, we have:
    \begin{align*}
        \mathbb{M}_{W_{\beta}}(X)
        &\geq \mathbb{M}_{W_{\beta}\cap V_{\varepsilon}}(X)\\
        &= \mathbb{M}_{W_{\beta}\cap V_{\varepsilon}}(\psi_{\varepsilon,j,\#}T_j+\partial (R_{\varepsilon,j}\llcorner W_{\alpha})+P_{\varepsilon,j}+X-T)\\
        &\geq \mathbb{M}_{W_{\beta}\cap V_{\varepsilon}}(\psi_{\varepsilon,j,\#}T_j+\partial (R_{\varepsilon,j}\llcorner W_{\alpha})+X-T)-\mathbb{M}_{W_{\alpha}\cap V_{\varepsilon}}(P_{\varepsilon,j})\\
        &\geq \delta_{\varepsilon,j}\mathbb{M}_{\psi_{\varepsilon,j}^{-1}(W_{\beta}\cap V_{\varepsilon})}\left((\psi_{\varepsilon,j}^{-1})_{\#}(\psi_{\varepsilon,j,\#}T_j+\partial (R_{\varepsilon,j}\llcorner W_{\alpha})+X-T)\right)-\eta_{\varepsilon,j}\\
        &= \delta_{\varepsilon,j}\mathbb{M}_{\psi_{\varepsilon,j}^{-1}(W_{\beta}\cap V_{\varepsilon})}\left(T_j+\partial(\psi_{\varepsilon,j}^{-1})_{\#} (R_{\varepsilon,j}\llcorner W_{\alpha})+(\psi_{\varepsilon,j}^{-1})_{\#}(X-T)\right)-\eta_{\varepsilon,j},
    \end{align*}
    where $\beta>\alpha$ is arbitrary, and $\delta_{\varepsilon,j}\rightarrow 1$, $\eta_{\varepsilon,j}\rightarrow0$ as $j\rightarrow\infty$. \par
    Note that $$T_j=\partial\llbracket E_j\rrbracket,$$ $$\partial(\psi_{\varepsilon,j}^{-1})_{\#} (R_{\varepsilon,j}\llcorner W_{\alpha})=\partial\llbracket(\psi_{\varepsilon,j}^{-1}(E)\cap\psi_{\varepsilon,j}^{-1}(W_{\alpha}))\rrbracket-\partial\llbracket E_j\cap\psi_{\varepsilon,j}^{-1}(W_{\alpha})\rrbracket,$$ $$(\psi_{\varepsilon,j}^{-1})_{\#}(X-T)=\partial\llbracket(\psi_{\varepsilon,j}^{-1}(F)\cap\psi_{\varepsilon,j}^{-1}(W_{\alpha}))\rrbracket-\partial\llbracket E\cap\psi_{\varepsilon,j}^{-1}(W_{\alpha})\rrbracket,$$
    where we extend the Lipschitz map $\psi_{\varepsilon,j}^{-1}$ to be defined everywhere in $U$. (But the currents under consideration are all in $V_{\varepsilon}$).
    Hence $$T_{\varepsilon,j}:=T_j+\partial(\psi_{\varepsilon,j}^{-1})_{\#} (R_{\varepsilon,j}\llcorner W_{\alpha})+(\psi_{\varepsilon,j}^{-1})_{\#}(X-T)=\partial \llbracket F_{\varepsilon,j}\rrbracket$$ is also a boundary for some $F_{\varepsilon,j}$.\par
    Consider a new current (in $U\setminus (V_{\varepsilon}^c\cap \psi_{\varepsilon,j}^{-1}\overline{W_{\alpha}})$) associated with a set $\llbracket\widetilde{F}_{\varepsilon,j}\rrbracket$:
    $$\llbracket\widetilde{F}_{\varepsilon,j}\rrbracket:=
    \llbracket F_{\varepsilon,j}\rrbracket\llcorner\psi_{\varepsilon,j}^{-1}W_{\beta}+\llbracket E_j\rrbracket\llcorner\psi_{\varepsilon,j}^{-1}W_{\beta}^c.$$
    By the slicing theory, 
    \begin{align*}
        \partial\llbracket\widetilde{F}_{\varepsilon,j}\rrbracket&=\partial(\llbracket F_{\varepsilon,j}\rrbracket\llcorner\psi_{\varepsilon,j}^{-1}W_{\beta})+\partial(\llbracket E_j\rrbracket\llcorner\psi_{\varepsilon,j}^{-1}W_{\beta}^c)\\
        &=T_{\varepsilon,j}\llcorner\psi_{\varepsilon,j}^{-1}W_{\beta}-\langle\llbracket F_{\varepsilon,j}\rrbracket,\varphi\circ\psi_{\varepsilon,j},\beta \rangle + T_j\llcorner\psi_{\varepsilon,j}^{-1}W_{\beta}^c+\langle\llbracket E_j\rrbracket,\varphi\circ\psi_{\varepsilon,j},\beta \rangle\\
        &=T_{\varepsilon,j}\llcorner\psi_{\varepsilon,j}^{-1}W_{\beta}+T_j\llcorner\psi_{\varepsilon,j}^{-1}W_{\beta}^c,
    \end{align*}
    where $\langle T,f,t\rangle$ denotes the slice of a current $T$ by a function $f$ at time $t$ (see \cite{Simon1983}), and in the last equality we use
    $$\langle\llbracket F_{\varepsilon,j}\rrbracket,\varphi\circ\psi_{\varepsilon,j},\beta \rangle=\langle\llbracket E_j\rrbracket,\varphi\circ\psi_{\varepsilon,j},\beta \rangle, $$
    because $F_{\varepsilon,j}$ coincides with $E_j$ outside $\psi_{\varepsilon,j}^{-1}\overline{W_{\alpha}}$.
    
    By the slicing theory again, for almost every $\varepsilon'\in(\varepsilon,2\varepsilon)$,
    $$\langle \partial\llbracket\widetilde{F}_{\varepsilon,j}\rrbracket,r,\varepsilon'\rangle=-\partial\langle\llbracket\widetilde{F}_{\varepsilon,j}\rrbracket,r,\varepsilon'\rangle, $$
    where $r(x)=\text{dist}(x,S\cap \overline{B_R})$. Now if we choose $\varepsilon'$ such that $\psi_{\varepsilon,j}^{-1}W_{\beta}\cap(U\setminus V_{\varepsilon})\subset\subset U_{\varepsilon'}$, and define $$\widetilde{E}_{\varepsilon,j}:=\widetilde{F}_{\varepsilon,j}\cap U_{\varepsilon'}^c,$$
    then
    \begin{align*}
        \partial\llbracket\widetilde{E}_{\varepsilon,j}\rrbracket=\partial(\llbracket\widetilde{F}_{\varepsilon,j}\rrbracket\llcorner U_{\varepsilon'}^c)=(\partial\llbracket\widetilde{F}_{\varepsilon,j}\rrbracket)\llcorner U_{\varepsilon'}^c-\langle\llbracket\widetilde{F}_{\varepsilon,j}\rrbracket,r,\varepsilon' \rangle.
    \end{align*}
    This is a boundary in $A_j\cap U$ which can be viewed as a hypersurface obtained by gluing  $(\partial\llbracket\widetilde{F}_{\varepsilon,j}\rrbracket)\llcorner U_{\varepsilon'}^c$ and $-\langle\llbracket\widetilde{F}_{\varepsilon,j}\rrbracket,r,\varepsilon' \rangle$ along their common boundary $\langle \partial\llbracket\widetilde{F}_{\varepsilon,j}\rrbracket,r,\varepsilon'\rangle$. Since
    $$\langle\llbracket\widetilde{F}_{\varepsilon,j}\rrbracket,r,\varepsilon' \rangle+\langle\llbracket\widetilde{F}_{\varepsilon,j}^c\rrbracket,r,\varepsilon' \rangle=\langle\llbracket U\rrbracket,r,\varepsilon' \rangle,$$
    by choosing $\widetilde{F}_{\varepsilon,j}^c$ instead of $\widetilde{F}_{\varepsilon,j}$ if necessary, we can assume that:
    $$\mathbb{M}(\langle\llbracket\widetilde{F}_{\varepsilon,j}\rrbracket,r,\varepsilon' \rangle)\le\frac{1}{2}\mathbb{M}(\langle\llbracket U\rrbracket,r,\varepsilon' \rangle).$$
    And by the coarea formula as well as the volume estimate $\mu_{A_j}(U_{2\varepsilon})\le C\varepsilon_0\varepsilon$, it is possible to choose $\varepsilon'\in(\varepsilon,2\varepsilon)$ such that:
    $$\mathbb{M}(\langle\llbracket U\rrbracket,r,\varepsilon' \rangle)\le C\varepsilon_0.$$
    Therefore, we have an estimate for the mass of $\partial\llbracket\widetilde{E}_{\varepsilon,j}\rrbracket$ near the singularities:
    $$\mathbb{M}(\langle\llbracket\widetilde{F}_{\varepsilon,j}\rrbracket,r,\varepsilon' \rangle)\le C\varepsilon_0.$$
    Since $T_j=\partial\llbracket E_j\rrbracket$ is a minimizing boundary in $A_j$, and $E_j=\widetilde{F}_{\varepsilon,j}=\widetilde{E}_{\varepsilon,j}$ outside $\psi_{\varepsilon,j}^{-1}\overline{W_{\alpha}}\cup U_{\varepsilon'}$, we have:
    $$\mathbb{M}_{\psi_{\varepsilon,j}^{-1}W_{\beta}\cup U_{\varepsilon'}}(\partial\llbracket \widetilde{E}_{\varepsilon,j}\rrbracket)\geq\mathbb{M}_{\psi_{\varepsilon,j}^{-1}W_{\beta}\cup U_{\varepsilon'}}(T_j)\geq\mathbb{M}_{\psi_{\varepsilon,j}^{-1}W_{\beta}}(T_j).$$
    Thus,
    \begin{align*}
        \mathbb{M}_{\psi_{\varepsilon,j}^{-1}W_{\beta}\cap U_{\varepsilon'}^c}(T_{\varepsilon,j})
        &=\mathbb{M}_{\psi_{\varepsilon,j}^{-1}W_{\beta}\cap U_{\varepsilon'}^c}(\partial\llbracket \widetilde{F}_{\varepsilon,j}\rrbracket)\\
        &=\mathbb{M}_{\psi_{\varepsilon,j}^{-1}W_{\beta}\cap U_{\varepsilon'}^c}(\partial\llbracket \widetilde{E}_{\varepsilon,j}\rrbracket)\\
        &\geq \mathbb{M}_{\psi_{\varepsilon,j}^{-1}W_{\beta}\cap V_{\varepsilon}}(T_j)-\mathbb{M}_{U_{\varepsilon'}(\partial\llbracket\widetilde{E}_{\varepsilon,j}\rrbracket)}\\
        &=\mathbb{M}_{\psi_{\varepsilon,j}^{-1}W_{\beta}\cap V_{\varepsilon}}(T_j)-\mathbb{M}(\langle\llbracket\widetilde{F}_{\varepsilon,j}\rrbracket,r,\varepsilon' \rangle)\\
        &\geq\delta_{\varepsilon,j}\mathbb{M}_{W_{\beta}\cap V_{\varepsilon}}(\psi_{\varepsilon,j,\#}T_j)-C\varepsilon_0.
    \end{align*}
    Combining the inequality:
    $$\mathbb{M}_{W_{\beta}}(X)\geq\delta_{\varepsilon,j}\mathbb{M}_{\psi_{\varepsilon,j}^{-1}(W_{\beta}\cap V_{\varepsilon})}(T_{\varepsilon,j})-\eta_{\varepsilon,j},$$
    we obtain:
    $$\mathbb{M}_{W_{\beta}}(X)\geq\delta_{\varepsilon,j}^2\mathbb{M}_{W_{\beta}\cap V_{\varepsilon}}(\psi_{\varepsilon,j,\#}T_j)-\eta_{\varepsilon,j}-C\varepsilon_0.$$
    Letting $\beta\rightarrow\alpha$, we have:
    $$\mathbb{M}_{W_{\alpha}}(X)\geq\delta_{\varepsilon,j}^2\mathbb{M}_{W_{\alpha}\cap V_{\varepsilon}}(\psi_{\varepsilon,j,\#}T_j)-\eta_{\varepsilon,j}-C\varepsilon_0,$$
    where we use (\ref{alpha}) and the fact that spt$(X-T)\subset\subset W_{\alpha}$. Then we let $j\rightarrow\infty$ and by the lower semi-continuity of mass with respect to weak convergence:
    $$\mathbb{M}_{W_{\alpha}}(X)\geq\mathbb{M}_{W_{\alpha}\cap V_{\varepsilon}}(T)-C\varepsilon_0.$$
    Finally, let $\varepsilon_0\rightarrow0$ (note that $2\varepsilon<\varepsilon_0$), and we obtain the following inequality:
    $$\mathbb{M}_{W_{\alpha}}(X)\geq\mathbb{M}_{W_{\alpha}}(T).$$
    By the definition of $W_{\alpha}$ and the arbitrariness of $R$, $X$, $K$ and $\eta$, we conclude that $T$ is minimizing in $A$ as required.

    It remains to prove that $\mu_{T_j}\rightharpoonup\mu_T$. By choosing $X=T$ in the proof above, we obtain:
    $$\mathbb{M}_{W_{\alpha}}(T)\geq\overline{\lim_{j\to\infty}}\left(\delta_{\varepsilon,j}^2\mathbb{M}_{W_{\alpha}\cap V_{\varepsilon}}(\psi_{\varepsilon,j,\#}T_j)-\eta_{\varepsilon,j}-C\varepsilon_0\right)\geq\overline{\lim_{j\to\infty}}\mathbb{M}_{K\cap V_{\varepsilon}}(T_j)-C\varepsilon_0.$$
    After comparing $T_j$ with the slice $\langle\llbracket E_j\rrbracket, r, \varepsilon'\rangle$ in a similar way, by the minimizing property of $T_j$ and the coarea formula,
    $$\mathbb{M}_{K\cap \overline{V_{\varepsilon}^c}}(T_j)\le\mathbb{M}(\langle\llbracket E_j\rrbracket, r, \varepsilon'\rangle)\le C\varepsilon_0,$$
    for some $\varepsilon<\varepsilon'<2\varepsilon<\varepsilon_0.$ Therefore,
    $$\mathbb{M}_{K\cap V_{\varepsilon}}(T_j)\geq\mathbb{M}_{K}(T_j)-C\varepsilon_0,$$
    and hence,
    $$\mathbb{M}_{W_{\alpha}}(T)\geq\mathbb{M}_{K\cap V_{\varepsilon}}(T_j)\geq\overline{\lim_{j\to\infty}}\mathbb{M}_{K}(T_j)-C\varepsilon_0.$$
    By the arbitrariness of $\alpha$ and $\varepsilon_0$, we have:
    $$\mathbb{M}_{K}(T)\geq\overline{\lim_{j\to\infty}}\mathbb{M}_{K}(T_j).$$
    This together with the lowe semi-continuity of mass gives:
    $$\mu_T(K)=\mathbb{M}_{K}(T)=\lim_{j\to\infty}\mathbb{M}_{K}(T_j)=\lim_{j\to\infty}\mu_{T_j}(K).$$
    Thus, we have $\mu_{T_j}\rightharpoonup\mu_T$ as Radon measures, by the arbitrariness of $K$.
\end{proof}
\begin{remark}
    The general case for integral minimizing currents can be reduced to the minimizing boundary case by a local decomposition of integral currents into boundaries. (See \cite{Simon1983}.)
\end{remark}

\subsection{Doubling measure and (1,1)-Poincare inequality}
\begin{definition}\label{doubling}
    A Borel outer measure $\mu$ on a metric space $(M,d)$ is doubling if there exists a constant $C$ depending on $(M,d)$ such that for every ball $B_r(x)$, we have
    $$0<\mu(B_{2r}(x))\le C\mu_r(x)<\infty.$$
\end{definition}

Given a function $u$ and a non-negative Borel measurable function $g$, we say that $g$ is an upper gradient of $u$ if
$$\lvert u(y)-u(x)\rvert\le\int_{\gamma}gds,$$
whenever $\gamma$ is a rectifiale curve connecting $x$ and $y$.

We say that a metric space $(M,d)$ with a doubling measure $\mu$ supports a $(1,1)$-Poincare inequality, if there are constants $C$ and $\lambda\geq1$ such that, for all $u$ with upper gradient $g_u$ and $B_r$, we have
$$\int_{B_r}\lvert u-\bar{u}_{B_r}\rvert d\mu\le Cr\int_{B_{\lambda r}}g_ud\mu,$$
where
$$\bar{u}_{B_r}=\frac{1}{\mu(B_r)}\int_{B_r}ud\mu.$$

Now given a metric space $M$ with induced measure $\mu$, we can consider the minimizing boundaries in $M$. And the following density bound holds:
\begin{lemma}[\cite{Kinnunen2013} Lemma 5.1]\label{lowerbound}
    Let $T=\partial \llbracket E\rrbracket$ be a minimizing boundary in a metric measure space $M$ with induced measure $\mu$. Suppose $\mu$ is doubling, and $M$ supports a $(1,1)$-Poincare inequality. Then we have
    $$\frac{r\mathbb{M}_{B_r(x)}(T)}{\mu(B_r(x))}\geq\gamma_0>0,\quad\forall x\in \text{spt}(T),\quad r>0,$$
    where $\gamma_0$ depends on the constants in the definition of doubling measure and the Poincare inequality.
\end{lemma}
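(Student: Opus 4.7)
The plan is to deduce this density lower bound from three ingredients: the relative isoperimetric inequality produced by the $(1,1)$-Poincar\'e inequality applied to $\chi_E$, the minimality of $T$ combined with slicing, and a density-decay bootstrap. Write $V(r):=\mathbb{M}_{B_r(x)}(T)$ and $m(r):=\mu(E\cap B_r(x))$. The hypothesis $x\in\text{spt}(T)$ gives $0<m(r)<\mu(B_r(x))$ for every $r>0$, and the task is to upgrade this qualitative statement into a uniform quantitative bound on $rV(r)/\mu(B_r(x))$.

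First I would apply the $(1,1)$-Poincar\'e inequality to $\chi_E$, whose distributional upper gradient is the perimeter measure of $T$. Combined with the identity
\[
\int_{B_r(x)}\bigl|\chi_E-\tfrac{m(r)}{\mu(B_r(x))}\bigr|\,d\mu=\frac{2m(r)(\mu(B_r(x))-m(r))}{\mu(B_r(x))},
\]
this yields the relative isoperimetric inequality
\[
\min\bigl(m(r),\mu(B_r(x))-m(r)\bigr)\le C_1\, r\, V(\lambda r),
\]
where $C_1$ and $\lambda\ge 1$ depend only on the doubling and Poincar\'e constants. Next I would exploit minimality of $T$: comparing $\llbracket E\rrbracket$ with $\llbracket E\setminus B_s(x)\rrbracket$ and with $\llbracket E\cup B_s(x)\rrbracket$, and slicing by the $1$-Lipschitz function $y\mapsto d(y,x)$, gives $V(s)\le\mathcal{H}^n(\partial B_s(x)\cap E)$ and $V(s)\le\mathcal{H}^n(\partial B_s(x)\setminus E)$ for a.e.\ $s$. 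The coarea formula then produces
\[
\int_0^r V(s)\,ds\le\min\bigl(m(r),\mu(B_r(x))-m(r)\bigr),
\]
so in particular $(r/2)V(r/2)\le\min(m(r),\mu(B_r(x))-m(r))$ since $V$ is non-decreasing.

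Finally I would argue by contradiction. Suppose $r_0 V(r_0)/\mu(B_{r_0}(x))<\gamma_0$ at some scale for a sufficiently small $\gamma_0$. The isoperimetric step combined with doubling forces one of $m(r_0)$ or $\mu(B_{r_0}(x))-m(r_0)$ to be at most $\varepsilon(\gamma_0)\,\mu(B_{r_0}(x))$, with $\varepsilon(\gamma_0)\to 0$ as $\gamma_0\to 0$; say $m(r_0)\le\varepsilon(\gamma_0)\mu(B_{r_0}(x))$. Feeding this back into the minimizing comparison at the next scale and iterating along a geometric sequence $r_k\downarrow 0$ drives $m(r_k)/\mu(B_{r_k}(x))\to 0$, which by the comparison inequality in turn makes $V$ vanish in a small ball around $x$, contradicting $x\in\text{spt}(T)$.

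The hard part is the quantitative bookkeeping. The isoperimetric step returns $V$ at the enlarged scale $\lambda r$, while the minimality step returns $V$ at the reduced scale $r/2$; aligning these two scale shifts via the doubling constant so that the bootstrap strictly decreases (rather than merely preserves) the smallness parameter is what ultimately pins down the admissible $\gamma_0=\gamma_0(C_D,C_P,\lambda)$. This is exactly the iterative argument carried out in \cite{Kinnunen2013}, which I would follow.
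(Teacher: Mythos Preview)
The paper does not give its own proof of this lemma; it simply cites \cite{Kinnunen2013}, Lemma~5.1. Your outline is the standard argument from that reference: Poincar\'e applied to $\chi_E$ yields a relative isoperimetric inequality, minimality plus slicing gives $V(s)\le m'(s)$ (and the symmetric bound with $E^c$), and iterating the resulting decay forces $m(r)/\mu(B_r(x))\to 0$, contradicting $x\in\text{spt}(T)$. So your proposal is correct and is precisely the route the paper defers to.

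One small point to clean up: in a general metric measure space the comparison competitor is not quite ``$\llbracket E\setminus B_s(x)\rrbracket$'' with a literal sphere slice $\mathcal{H}^n(\partial B_s(x)\cap E)$; rather, the minimality inequality is phrased as $V(s)\le m'(s)$ for a.e.\ $s$, obtained by slicing with the distance function and using that $m$ is absolutely continuous (this is how \cite{Kinnunen2013} formulates it). Your coarea step recovers exactly this, so the content is right, but the intermediate expression $\mathcal{H}^n(\partial B_s(x)\cap E)$ should be replaced by $m'(s)$ to be meaningful in the metric setting.
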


\subsection{The Hausdorff Dimension of Singular Sets}
In this part, we list some standard properties on the Hausdorff dimension of the singular sets of minimizing hypersurfaces. (See \cite{Giusti1984} for more details.)
\begin{definition}
    Suppose $S\subseteq \RR^d, 0\le k<\infty, 0<\delta\le\infty$. Then we define
    $$\mathcal{H}_{\delta}^k(S):=\omega_k2^{-k}\inf\left\{\sum_{j=1}^{\infty}(\text{diam}S_j)^k\mid S\subseteq\cup_jS_j,\text{ diam}(S_j)<\delta\right\}.$$
    Then $H^k_{\delta}(S)$ is decreasing in $\delta$. And the $k$-dimensional Hausdorff measure is
    $$\mathcal{H}^k(S)=\lim_{\delta\rightarrow0}\mathcal{H}^k_{\delta}(S)=\sup_{\delta}\mathcal{H}^k_{\delta}(S).$$
\end{definition}

We have the following well-known criterion for a set to be Hausdorff measure zero:
\begin{lemma}[\cite{Giusti1984} Lemma 11.2]\label{hm0}
    For every $S\subset\RR^d$, $\mathcal{H}^k(S)=0$ if and only if $\mathcal{H}^k_{\infty}(S)=0$.
\end{lemma}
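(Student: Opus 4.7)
The statement is essentially a formal manipulation of the definitions; the forward direction is immediate from monotonicity, while the reverse direction uses the observation that a cover whose total $k$-weight is small must consist of pieces of small diameter.

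The plan is as follows. First, for the ``only if'' direction, I would note that for every $\delta\in(0,\infty]$ any admissible cover in the definition of $\mathcal{H}^k_\delta(S)$ (one with $\mathrm{diam}(S_j)<\delta$) is also admissible in the definition of $\mathcal{H}^k_\infty(S)$, since the constraint $\mathrm{diam}(S_j)<\infty$ is vacuous. Hence $\mathcal{H}^k_\infty(S)\le \mathcal{H}^k_\delta(S)$ for every $\delta>0$. If $\mathcal{H}^k(S)=0$, then $\mathcal{H}^k_\delta(S)=0$ for all $\delta>0$ (since $\mathcal{H}^k_\delta$ is increasing as $\delta\downarrow 0$ and its supremum is $\mathcal{H}^k(S)$), so $\mathcal{H}^k_\infty(S)=0$.

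For the converse, assume $\mathcal{H}^k_\infty(S)=0$. Fix $\delta>0$ and $\varepsilon>0$; I want to show $\mathcal{H}^k_\delta(S)\le\varepsilon$. By the hypothesis there exists a countable cover $\{S_j\}$ of $S$ with
\[
\sum_{j}\omega_k 2^{-k}(\mathrm{diam}\,S_j)^k \;<\; \min\!\left\{\varepsilon,\;\omega_k 2^{-k}\delta^k\right\}.
\]
The key observation is that each individual term of the sum is then bounded by the total, which gives
\[
\omega_k 2^{-k}(\mathrm{diam}\,S_j)^k < \omega_k 2^{-k}\delta^k,
\]
so $\mathrm{diam}(S_j)<\delta$ for every $j$. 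Consequently $\{S_j\}$ is an admissible cover for the definition of $\mathcal{H}^k_\delta(S)$, yielding $\mathcal{H}^k_\delta(S)\le\varepsilon$. Since $\varepsilon$ was arbitrary, $\mathcal{H}^k_\delta(S)=0$, and letting $\delta\downarrow 0$ gives $\mathcal{H}^k(S)=\sup_\delta\mathcal{H}^k_\delta(S)=0$.

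There is essentially no obstacle here; the only point to be careful about is ensuring that the covering sets $S_j$ may be replaced by sets of small diameter without losing the cover, which is exactly the self-bounding behavior of the individual summands used above. No additional tools (no density estimates, no Vitali-type covering arguments) are needed, since the quantity $\mathcal{H}^k_\infty$ is defined with no upper bound on diameters but its finiteness already forces each piece to have controlled size.
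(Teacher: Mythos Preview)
Your argument is correct and is exactly the standard proof. The paper itself does not prove this lemma but merely cites it from \cite{Giusti1984}; the proof there follows precisely the route you describe, using that a cover of total $k$-weight below $\omega_k 2^{-k}\delta^k$ automatically has all pieces of diameter less than $\delta$.
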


And the following proposition holds for points in a $k$-dimensional set:
\begin{prop}[\cite{Giusti1984} Proposition 11.3]\label{hkae}
    If $S\subseteq\RR^d$, then for $\mathcal{H}^k$-almost every $x\in S$, we have
    $$\limsup_{r\rightarrow0}\frac{\mathcal{H}^k_{\infty}(S\cap B_r(x))}{\omega_kr^k}\geq2^{-k}.$$
\end{prop}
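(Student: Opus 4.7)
The plan is to show $\mathcal{H}^k(E_t) = 0$ for every $t < 2^{-k}$, where
\[
E_t := \Bigl\{x \in S : \limsup_{r \to 0}\tfrac{\mathcal{H}^k_\infty(S \cap B_r(x))}{\omega_k r^k} < t\Bigr\}.
\]
By Lemma \ref{hm0} this is equivalent to $\mathcal{H}^k_\infty(E_t) = 0$, and taking a rational sequence $t \uparrow 2^{-k}$ then yields the full claim. The strategy is to reduce to pieces on which the density bound is uniform in scale and on which $\mathcal{H}^k_\delta$ collapses onto $\mathcal{H}^k_\infty$, so that one application of a covering refinement suffices.

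For each $m \in \mathbb{N}$ set
\[
E_t^m := \{x \in E_t : \mathcal{H}^k_\infty(S \cap B_r(x)) < t\,\omega_k r^k \ \text{for all } r \leq 1/m\},
\]
so that $E_t = \bigcup_m E_t^m$. By countable subadditivity of $\mathcal{H}^k_\infty$, together with the observation that $\mathbb{R}^d$ can be written as a countable union of balls of radius $<1/(2m)$, it suffices to prove $\mathcal{H}^k_\infty(A) = 0$ for every $A$ of the form $E_t^m \cap B_R(x_0)$ with $R < 1/(2m)$.

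Fix such an $A$. The key refinement step is
\[
\mathcal{H}^k_{2/m}(A) \leq 2^k t\, \mathcal{H}^k_{1/m}(A).
\]
To prove it, start from an almost optimal cover $\{U_j\}$ of $A$ with $\mathrm{diam}\,U_j = d_j < 1/m$. For each $j$ with $U_j \cap A \neq \emptyset$ choose $y_j \in U_j \cap A \subseteq E_t^m$; then $U_j \subseteq \bar B_{d_j}(y_j)$, and since $d_j < 1/m$ with $y_j \in E_t^m$, the density bound produces a cover $\{V_{j,l}\}_l$ of $S \cap B_{d_j}(y_j)$ of diameters $\leq 2 d_j < 2/m$ whose total $(\mathrm{diam})^k$-cost is arbitrarily close to $t \cdot 2^k d_j^k$. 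The family $\{V_{j,l}\}_{j,l}$ then covers $A$, and the displayed inequality follows by summing over $j$ and letting the approximation error go to $0$.

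Because $\mathrm{diam}(A) \leq 2R < 1/m < 2/m$, intersecting any cover of $A$ with $B_R(x_0)$ shows $\mathcal{H}^k_{1/m}(A) = \mathcal{H}^k_{2/m}(A) = \mathcal{H}^k_\infty(A)$. The refinement estimate collapses to $\mathcal{H}^k_\infty(A) \leq 2^k t\, \mathcal{H}^k_\infty(A)$, and because $A$ is bounded one has $\mathcal{H}^k_\infty(A) < \infty$, so $2^k t < 1$ forces $\mathcal{H}^k_\infty(A) = 0$. The main delicacy is precisely this scale bookkeeping: choosing $R < 1/(2m)$ is essential in order to make $\mathcal{H}^k_{1/m}$, $\mathcal{H}^k_{2/m}$ and $\mathcal{H}^k_\infty$ coincide on $A$, so that a single application of the refinement --- rather than a genuine iteration with potentially divergent intermediate quantities --- gives the nullity.
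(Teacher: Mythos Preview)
Your argument is correct. The paper does not supply its own proof of this proposition but simply cites it from Giusti (Proposition~11.3), and your refinement-and-contraction approach---re-covering an almost optimal cover using the density hypothesis to gain the factor $2^{k}t<1$, after localizing to pieces small enough that $\mathcal{H}^{k}_{1/m}$, $\mathcal{H}^{k}_{2/m}$ and $\mathcal{H}^{k}_{\infty}$ coincide---is essentially the standard argument found there.
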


\begin{lemma}\label{limsing}
     Suppose $A_j\subseteq \mathbb{R}^{k+l}$ is a sequence of $(k+1)$-dimensional manifolds with singularities converging to some $A$ in the current sense. And assume that the convergence is locally $C^1$ away from Sing$(A)$. 
     
     Suppose $T_j=\partial\llbracket E_j\rrbracket$ are minimizing boundaries in $A_j$ converging to $T=\partial\llbracket E\rrbracket$ in $A$ as in Lemma \ref{cpt}. For any $\alpha>0$, define
     $$T^{\alpha}:=\{x\in\text{spt}(T)\mid \Theta_T(x,r)\ge\alpha,\:\forall\: r\in (0,1]\}.$$
     Then for any compact set $K\subseteq\RR^{k+l}$ and any $s>0$, $\alpha>0$,
     $$\mathcal{H}^s_{\infty}(T^{\alpha}\cap\text{Sing}(A)\cap K)\geq\limsup_{j\rightarrow\infty}\mathcal{H}^s_{\infty}(T_j^{\alpha}\cap\text{Sing}(A_j)\cap K).$$
\end{lemma}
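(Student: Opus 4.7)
Set $\Sigma := T^{\alpha} \cap \text{Sing}(A) \cap K$ and $\Sigma_j := T_j^{\alpha} \cap \text{Sing}(A_j) \cap K$. My plan is first to establish a Kuratowski-type limit inclusion — any subsequential limit of a sequence $x_j \in \Sigma_j$ lies in $\Sigma$ — and then to convert this into the Hausdorff premeasure bound by an open-cover argument. The main inputs are the mass convergence $\mu_{T_j} \rightharpoonup \mu_T$ delivered by Lemma \ref{cpt}, the hypothesis that $A_j \to A$ locally in $C^1$ away from $\text{Sing}(A)$, and left-continuity of the Radon measure $\mu_T$ on nested open balls.

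To prove the limit inclusion, take $x_j \in \Sigma_j$ and, using compactness of $K$, extract a subsequence (not relabeled) with $x_j \to x \in K$. If $x \notin \text{Sing}(A)$, then either $x \notin \text{spt}(A)$ or $x \in \text{Reg}(A)$; in both cases the locally $C^1$ convergence gives a fixed neighborhood of $x$ in which, for large $j$, $A_j$ is a $C^1$ submanifold $C^1$-diffeomorphic to $A$, contradicting $x_j \in \text{Sing}(A_j)$ with $x_j \to x$. To see $x \in T^{\alpha}$, fix $r \in (0,1]$ with $\mu_T(\partial B_r(x)) = 0$ (a full-measure set). For $j$ so large that $|x_j - x| < \epsilon$, the inclusions $B_{r-\epsilon}(x) \subseteq B_r(x_j) \subseteq B_{r+\epsilon}(x)$ yield the sandwich
$$\mu_{T_j}(B_{r-\epsilon}(x)) \le \mu_{T_j}(B_r(x_j)) \le \mu_{T_j}(B_{r+\epsilon}(x)),$$
and $\mu_{T_j} \rightharpoonup \mu_T$ combined with $\mu_T(\partial B_{r \pm \epsilon}(x)) = 0$ for a.e.\ $\epsilon$ then gives, after letting $\epsilon \to 0$, the convergence $\mu_{T_j}(B_r(x_j)) \to \mu_T(B_r(x))$. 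Hence $\Theta_{T_j}(x_j, r) \ge \alpha$ passes in the limit to $\Theta_T(x, r) \ge \alpha$ for a.e.\ $r \in (0,1]$. I then upgrade this to every $r \in (0,1]$ by left-continuity: pick $r_n \nearrow r$ with $\Theta_T(x, r_n) \ge \alpha$, and use $\mu_T(B_r(x)) = \lim_n \mu_T(B_{r_n}(x)) \ge \alpha \omega_k r^k$. This also forces $x \in \text{spt}(T)$, so $x \in \Sigma$.

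For the premeasure inequality, fix $\eta > 0$ and choose a cover $\{S_i\}_i$ of $\Sigma$ with $\sum_i \omega_s 2^{-s}(\text{diam}\, S_i)^s \le \mathcal{H}^s_{\infty}(\Sigma) + \eta$. Fatten each $S_i$ to the open neighborhood $U_i := \{y : \text{dist}(y, S_i) < \epsilon_i\}$, choosing $\epsilon_i$ small enough that $\sum_i \omega_s 2^{-s}(\text{diam}\, U_i)^s \le \mathcal{H}^s_{\infty}(\Sigma) + 2\eta$, which is possible because each $(\text{diam}\, S_i + 2\epsilon_i)^s - (\text{diam}\, S_i)^s$ can be made summably small. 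Set $U := \bigcup_i U_i$, an open set containing $\Sigma$. The limit inclusion forces $\Sigma_j \subset U$ for all large $j$: otherwise a subsequence produces $x_{j_k} \in \Sigma_{j_k} \setminus U$ with $x_{j_k} \to x$ in the closed set $K \setminus U$, contradicting $x \in \Sigma \subset U$. For such $j$ the family $\{U_i\}$ covers $\Sigma_j$, giving $\mathcal{H}^s_{\infty}(\Sigma_j) \le \mathcal{H}^s_{\infty}(\Sigma) + 2\eta$; taking $\limsup_j$ and then $\eta \to 0$ finishes the proof.

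The main technical obstacle is securing the density bound $\Theta_T(x, r) \ge \alpha$ at \emph{every} $r \in (0,1]$ rather than only a.e.\ $r$ — without this, the limit $x$ might escape $T^{\alpha}$ as it is defined. The left-continuity argument, which relies only on the monotone continuity of $\mu_T$ on nested open balls, is precisely what makes $T^{\alpha}$ effectively closed under the subsequential limits coming from the $\Sigma_j$'s, and thus underwrites the Kuratowski-type inclusion on which the whole proof rests.
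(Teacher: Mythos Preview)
Your proof is correct and follows essentially the same two-step structure as the paper: first establish that any subsequential limit of points $x_j\in T_j^{\alpha}\cap\text{Sing}(A_j)\cap K$ lies in $T^{\alpha}\cap\text{Sing}(A)\cap K$, then use an open cover of the latter set to bound $\mathcal{H}^s_{\infty}$ of the former for large $j$. Your version is in fact more carefully written: the paper simply assumes the covering sets are open and asserts $\Theta_T(x,r)\ge\alpha$ directly for each $r$, whereas you make the fattening explicit and supply the left-continuity argument upgrading the bound from a.e.\ $r$ to every $r\in(0,1]$.
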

\begin{proof}
    For $\varepsilon>0$, let $\{S_j\}$ be a covering of $T^{\alpha}\cap\text{Sing}(A)\cap K$ such that
    $$\mathcal{H}^s_{\infty}(T^{\alpha}\cap\text{Sing}(A)\cap K)>\omega_s2^{-s}\sum_{j}(\text{diam}S_j)^s-\varepsilon.$$
    Without loss of generality, assume $S_j$ are open. Let $W=\cup_jS_j$. We claim that:
    $$T_j^{\alpha}\cap\text{Sing}(A_j)\cap K\subseteq W$$ holds for $j$ large enough. Otherwise, there will exist $x_j\in T_j^{\alpha}\cap\text{Sing}(A_j)\cap K\setminus W$ converging to some $x\in K\setminus W\subseteq K\setminus(T^{\alpha}\cap\text{Sing}(A))$. For each $x_j$ and $r$, choose $r_j<r$ such that $B_{r_j}(x_j)\subseteq B_r(x)$, and $r_j\rightarrow r$ as $j\rightarrow\infty$. Then since $\mu_{T_j}\rightarrow\mu_T$ by Lemma \ref{cpt}, we have:
    $$\Theta_T(x,r)=\lim_{j\rightarrow\infty}\frac{\mu_{T_j}(B_r(x))}{\omega_kr_j^k}\geq\liminf_{j\rightarrow\infty}\frac{\mu_{T_j}(B_{r_j}(x_j))}{\omega_kr_j^k}\geq\alpha,$$
    where in the last inequality we use the definition of $T_j^{\alpha}$ and $x_j\in T_j^{\alpha}$. This shows that $x\in \text{spt}(T)$ and moreover, $x\in T^{\alpha}$. Thus, $x\in K\cap T^{\alpha}\cap \text{Reg}(A)$, which contradicts to $x_j\in \text{Sing}(A_j)\rightarrow x$, since $A_j$ converges to $A$ in the $C^1$ sense locally away from $\text{Sing}(A)$. This proves the claim.
    
    Hence, from the claim we know that:
    $$\mathcal{H}^s_{\infty}(T_j^{\alpha}\cap\text{Sing}(A_j)\cap K)\le\mathcal{H}^s_{\infty}(W)\le\omega_s2^{-s}\sum_{j}(\text{diam}S_j)^s.$$
    Thus,
    $$\limsup_{j\rightarrow\infty}\mathcal{H}^s_{\infty}(T_j^{\alpha}\cap\text{Sing}(A_j)\cap K)\le\mathcal{H}^s_{\infty}(T^{\alpha}\cap\text{Sing}(A)\cap K)+\varepsilon.$$
    And the conclusion follows as $\varepsilon$ is arbitrary. 
\end{proof}

The following proposition is a corollary of Lemma \ref{lowerbound} and shows that we have lower density estimates for area-minimizing currents in admissible spaces. And therefore, combining the proof above, we see that the Hausdorff dimension of singular sets does not decrease when we pass to a limit.

\begin{prop}\label{lowerdensity}
    There exists a constant $\alpha_0>0$, such that whenever $A\in \mathcal{F}$ and $T$ is a minimizing boundary in $A$, it holds that:
    $$\Theta_T(x,r)\geq\alpha_0,\quad\forall x\in\text{spt}(T),\quad r>0.$$
\end{prop}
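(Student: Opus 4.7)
The plan is to derive this as a direct corollary of Lemma \ref{lowerbound}, combined with a uniform lower bound on the volume ratio $\mu_A(B_r(x))/r^{n+1}$ on admissible ambient spaces. Since condition (7) in Definition \ref{admissible} guarantees that $\mu_A$ is doubling and that $A$ supports a $(1,1)$-Poincar\'e inequality, Lemma \ref{lowerbound} applies and yields
$$\mathbb{M}_{B_r(x)}(T) \ge \gamma_0 \cdot \frac{\mu_A(B_r(x))}{r},$$
for some $\gamma_0 > 0$ depending on the doubling and Poincar\'e constants. Dividing by $\omega_n r^n$ then gives
$$\Theta_T(x,r) \ge \frac{\gamma_0}{\omega_n} \cdot \frac{\mu_A(B_r(x))}{r^{n+1}},$$
so it suffices to produce a uniform positive lower bound for $\mu_A(B_r(x))/r^{n+1}$ when $A \in \mathcal{F}$, $x \in A$, and $r > 0$.

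For the volume bound I would argue as follows. By conditions (2) and (4), at every point $x \in A$ there exists a tangent cone $C$ which is a metric cone in a space of class $\mathcal{F}_n$. The scaling invariance $\eta_{0,\lambda,\#}C = C$ forces $\mu_C(B_r(0))/r^{n+1}$ to be constant in $r$, equal to a positive number $\Theta_C(0)$; the positivity comes from the fact that the regular part of $C$ is a nonempty connected orientable manifold of dimension $n+1$ (condition (2)). Combining this with the doubling property of $\mu_A$ and the $C^1$ convergence away from $\mathrm{Sing}(C)$, the density ratio $\mu_A(B_r(x))/(\omega_{n+1} r^{n+1})$ must stay within a uniform multiplicative factor of $\Theta_C(0)$ on every scale. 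The upper bound from condition (6) together with the doubling property propagates this control from small scales to all scales.

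The main obstacle is showing that $\Theta_C(0)$ is bounded below \emph{uniformly} across all admissible tangent cones, rather than merely being positive for each individual $C$. The natural strategy is a compactness/contradiction argument: if a sequence $A_j \in \mathcal{F}$ had degenerating densities at some points $x_j$, then after rescaling so that the relevant scale is $1$, the sequence of rescaled currents and ambient spaces would have subsequential limits by Lemma \ref{cpt}, and the limit ambient space would inherit admissibility (conditions (1)--(7) pass to such limits via the inductive definition of $\mathcal{F}_n$, especially condition (5) for tangent cones). The limit space would then have zero volume density at the origin, contradicting the fact that it still lives in $\mathcal{F}$ with connected orientable regular part of full dimension. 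This compactness/blow-up step is the substantive content; once it is in place, the proposition follows by combining Lemma \ref{lowerbound} with the resulting uniform volume lower bound, with $\alpha_0 := (\gamma_0/\omega_n)\cdot c \cdot \omega_{n+1}$ where $c$ is the uniform lower bound on $\Theta_C(0)$.
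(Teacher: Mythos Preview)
Your approach is the same as the paper's: the paper's entire ``proof'' is the single sentence ``For a proof of this proposition, see \cite{Kinnunen2013} and Lemma \ref{lowerbound},'' so you are actually supplying more detail than the paper does by explicitly writing out the reduction $\Theta_T(x,r)\ge (\gamma_0/\omega_n)\,\mu_A(B_r(x))/r^{n+1}$ and then isolating the volume lower bound as the remaining issue.

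One caveat worth flagging: the uniformity of $\alpha_0$ across \emph{all} $A\in\mathcal{F}$ requires uniform doubling and Poincar\'e constants (so that $\gamma_0$ in Lemma \ref{lowerbound} is uniform) as well as your uniform lower bound on $\mu_A(B_r)/r^{n+1}$. Condition (7) in Definition \ref{admissible} does not literally stipulate uniform constants, and your compactness sketch assumes that limits of admissible spaces remain admissible, which is not established in the paper. In practice this does not matter for the application: in Corollary \ref{exist} the proposition is only invoked for a fixed $A$ together with its rescalings $\eta_{x_0,r_j,\#}A$ and their limit cone $C$, and these share the same doubling/Poincar\'e constants by scale invariance. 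So the argument you wrote is adequate for how the proposition is actually used, even if the full uniform statement over $\mathcal{F}$ would need the extra hypotheses you identify.
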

For a proof of this proposition, see \cite{Kinnunen2013} and Lemma \ref{lowerbound}.

As a consequence, we can obtain the following existence result, which allows us to take limits when considering regularity problems.

\begin{corollary}\label{exist}
    Suppose $A\in \mathcal{F}$ as in, $T$ is a minimizing boundary in $A$, and $\mathcal{H}^k(\text{spt}(T)\cap\text{Sing}(A))>0$. Then there exists a minimizing boundary $T_{\infty}$ in the tangent cone $C$ of $A$ such that $\mathcal{H}^k(\text{spt}(T_{\infty})\cap\text{Sing}(C))>0$.
\end{corollary}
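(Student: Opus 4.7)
The plan is to pick a carefully chosen singular point $p\in\text{spt}(T)\cap\text{Sing}(A)$, blow up $A$ and $T$ at $p$, and then transfer the positive Hausdorff content of the singular intersection from the rescaled sequence to the limit by combining the compactness result (Lemma \ref{cpt}) with the Hausdorff-content semicontinuity (Lemma \ref{limsing}).

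First I would apply Proposition \ref{hkae} to the set $S:=\text{spt}(T)\cap\text{Sing}(A)$, which has positive $\mathcal{H}^k$-measure by hypothesis. This produces a base point $p\in S$ and a sequence $\lambda_j\downarrow 0$ with
$$\mathcal{H}^k_\infty\bigl(S\cap B_{\lambda_j}(p)\bigr)\;\geq\;2^{-k-1}\omega_k\lambda_j^k.$$
Set $A_j:=\eta_{p,\lambda_j,\#}A$ and $T_j:=\eta_{p,\lambda_j,\#}T$. Conditions (2), (4) and (6) in Definition \ref{admissible} guarantee, after passing to a subsequence, that $A_j$ converges as currents to a tangent cone $C$ of $A$ at $p$, the convergence is locally $C^1$ away from $\text{Sing}(C)$, and the uniform upper density bound required by Lemma \ref{cpt} is in force (it is scale invariant).

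Applying Lemma \ref{cpt} (and passing to a further subsequence) then produces a minimizing boundary $T_\infty$ in $C$ with $T_j\rightharpoonup T_\infty$ and $\mu_{T_j}\rightharpoonup\mu_{T_\infty}$. In order to invoke Lemma \ref{limsing}, I need to identify $\text{spt}(T_j)$ with the superlevel set $T_j^{\alpha}$ for a uniform $\alpha>0$. This is exactly the content of Proposition \ref{lowerdensity}: there is a constant $\alpha_0>0$, depending only on the doubling and Poincar\'e constants, such that $\Theta_{T_j}(x,r)\geq\alpha_0$ whenever $x\in\text{spt}(T_j)$ and $r>0$, so $\text{spt}(T_j)=T_j^{\alpha_0}$ and likewise $\text{spt}(T_\infty)=T_\infty^{\alpha_0}$.

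Finally, using the scaling law $\mathcal{H}^k_\infty(\eta_{p,\lambda}E)=\lambda^{-k}\mathcal{H}^k_\infty(E)$, the lower bound at scale $\lambda_j$ becomes
$$\mathcal{H}^k_\infty\bigl(T_j^{\alpha_0}\cap\text{Sing}(A_j)\cap\overline{B_1(0)}\bigr)\;\geq\;2^{-k-1}\omega_k$$
for every $j$. Invoking Lemma \ref{limsing} with $K=\overline{B_1(0)}$, $\alpha=\alpha_0$ and $s=k$ yields
$$\mathcal{H}^k_\infty\bigl(T_\infty^{\alpha_0}\cap\text{Sing}(C)\cap\overline{B_1(0)}\bigr)\;\geq\;2^{-k-1}\omega_k\;>\;0,$$
so a fortiori $\mathcal{H}^k\bigl(\text{spt}(T_\infty)\cap\text{Sing}(C)\bigr)>0$, which is the desired conclusion. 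The only real obstacle is the first step: one must choose the blow-up base point at a location of positive lower $k$-density of the singular set so that the rescaled bounds are uniform in $j$; once Proposition \ref{hkae} supplies such a $p$, the remaining ingredients --- compactness, uniform lower density, and scale-invariance of $\mathcal{H}^k_\infty$ --- assemble routinely through Lemma \ref{limsing}.
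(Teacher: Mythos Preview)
Your proposal is correct and follows essentially the same route as the paper: choose a density point via Proposition \ref{hkae}, blow up, invoke Lemma \ref{cpt} for compactness, use Proposition \ref{lowerdensity} to identify $\text{spt}$ with $T^{\alpha_0}$, and then pass the $\mathcal{H}^k_\infty$ lower bound through Lemma \ref{limsing} together with the scaling of $\mathcal{H}^k_\infty$. The paper additionally cites Lemma \ref{hm0} explicitly at the end to upgrade $\mathcal{H}^k_\infty>0$ to $\mathcal{H}^k>0$, which you invoke implicitly with your ``a fortiori''.
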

\begin{proof}
    From Lemma \ref{hm0} and Proposition \ref{hkae} it follows that there exists a point $x_0\in \text{spt}(T)\cap\text{Sing}(A)$ and a sequence $\{r_j\}\rightarrow0$ such that:
    $$\mathcal{H}^k_{\infty}(\text{spt}(T)\cap\text{Sing}(A)\cap B_{r_j}(x_0))\geq 2^{-k-1}\omega_k r_j^k.$$
    If we set $A_j:=\eta_{x_0,r_j,\#}A$ and $T_j:=\eta_{x_0,r_j,\#}T$, then from Lemma \ref{cpt} we know that there exists a minimizing boundary $T_{\infty}$ in the tangent cone $C$ of $A$ as the blow-up limit of $T_j$. Thanks to Proposition \ref{lowerdensity}, we have $\text{spt}(T)=\text{spt}(T^{\alpha_0})$ and $\text{spt}(T_{\infty})=\text{spt}(T_{\infty}^{\alpha_0})$. Hence, Lemma \ref{limsing} is applicable and we obtain:
    \begin{align*}
        \mathcal{H}^{k}_{\infty}(\text{spt}(T_{\infty})\cap\text{Sing}(C)\cap B_1)
        &\geq\limsup_{j\rightarrow\infty}\mathcal{H}^{k}_{\infty}(\text{spt}(T_j)\cap\text{Sing}(A_j)\cap B_1)\\
        &=\limsup_{j\rightarrow\infty}r_j^{-k}\mathcal{H}^{k}_{\infty}(\text{spt}(T)\cap\text{Sing}(A)\cap B_{r_j}(x_0))\\
        &\geq 2^{-k-1}\omega_k.
    \end{align*}
    The conclusion follows again from Lemma \ref{hm0}.
\end{proof}

\section{The Dimension Two Case}
Let $(C(M^n),g)$ be a conical manifold with vertex $O$, where $(M^n,g_M)$ is the cross section, and
$$g=dt^2+t^2g_M.$$
And let $\Sigma^{n-1}$ be an embedded minimal hypersurface in $(M,g_M)$. Then we study the possibility whether $C(\Sigma^{n-1})$ can be an $n$-dim area-minimizing hypercone in $C(M^n)$. And we are interested in the case $n=2$ in this section.

We first assume that the scalar curvature of the ambient cone $C(M)$ is strictly positive and prove:
\begin{theorem}\label{cone}
    If $n=2$ and $\text{Scal}_M\geq \lambda^{-2}>1$, $0<\lambda<1$, then the 2-dimensional hypercone $C(\Sigma^1)$ is not stable in the 3-dimensional $C(M^2)$ and therefore not area-minimizing. 
\end{theorem}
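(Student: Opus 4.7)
The plan is to reduce the stability inequality on the totally geodesic hypercone $C(\Sigma^1)$ to a spectral problem on the cross-section $\Sigma^1$ via a two-dimensional conformal change, and then to construct a destabilizing test function with the aid of a Hardy-Littlewood-Sobolev type inequality along $\Sigma^1$.

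First I would note that $\Sigma^1$, being a minimal hypersurface in the surface $M^2$, is a geodesic; a short Christoffel computation in the warped product metric $g=dt^2+t^2 g_M$ then shows that $C(\Sigma^1)$ is totally geodesic in $C(M^2)$, so $|A|^2 \equiv 0$ on its regular part. The warped-product Ricci formula yields
\[
\text{Ric}_{C(M^2)}(\nu,\nu) \;=\; \frac{1}{t^2}\!\left(\frac{\text{Scal}_M}{2}-1\right)
\]
for the unit normal $\nu$, so for $\phi\in C_c^\infty(\text{Reg}(C(\Sigma^1)))$ the stability inequality simplifies to
\[
\int_{C(\Sigma^1)} \frac{\text{Scal}_M/2-1}{t^2}\,\phi^2 \, dA \;\leq\; \int_{C(\Sigma^1)} |\nabla\phi|^2\,dA.
\]
The induced metric on $C(\Sigma^1)$ is the flat cone metric $dt^2+t^2\,ds^2$; setting $t=e^\tau$ makes it conformal to $d\tau^2+ds^2$ on the cylinder $\mathbb{R}_\tau\times\Sigma^1_s$. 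By the 2D conformal invariance of the Dirichlet integral, and because the factor $1/t^2$ exactly cancels the conformal area element $dA=e^{2\tau}d\tau\,ds$, the inequality becomes the clean form
\[
\int_{\mathbb{R}\times\Sigma^1}\!\bigl(\tfrac12\text{Scal}_M(s)-1\bigr)\phi^2\,d\tau\,ds \;\leq\; \int_{\mathbb{R}\times\Sigma^1}\!|\nabla\phi|^2\,d\tau\,ds.
\]

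To violate this, I would test against a product $\phi(\tau,s)=f(\tau)g(s)$ in which $f$ is a slowly spreading cutoff in $\tau$ whose Rayleigh quotient $\int f_\tau^2/\int f^2$ can be made arbitrarily small. The problem then reduces to exhibiting $g$ on the closed 1-manifold $\Sigma^1$ with $\int_{\Sigma^1}(g')^2\,ds < \int_{\Sigma^1}(\tfrac12\text{Scal}_M-1)g^2\,ds$, i.e., to showing that the first eigenvalue of the 1D Schr\"odinger operator $-d_s^2-(\text{Scal}_M/2-1)$ on $\Sigma^1$ is strictly negative.

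The main obstacle is that the naive choice $g\equiv 1$ only secures this negativity when $\overline{\text{Scal}_M}>2$, whereas the hypothesis only provides $\text{Scal}_M\geq\lambda^{-2}>1$. Bridging this gap is precisely where the Hardy-Littlewood-Sobolev type inequality enters: applied in the $s$-variable on $\Sigma^1$, HLS couples $\int(g')^2$ with $\int\text{Scal}_M\,g^2$ sharply enough that the pointwise bound $\text{Scal}_M\geq\lambda^{-2}>1$ forces the required negative eigenvalue, with the constant $1$ playing the role of the sharp 2D conformal constant. Identifying and implementing the correct HLS estimate in this 1D conformal setting is the technical heart of the proof; once the destabilizing $g$ has been produced, assembling $\phi=f(\tau)g(s)$ with $f$ as above violates the stability inequality and hence excludes area-minimality of $C(\Sigma^1)$.
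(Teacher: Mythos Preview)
Your conformal reduction to the cylinder and the separation $\phi=f(\tau)g(s)$ are fine, and in fact are equivalent to what the paper does: the paper simply takes a radial test function $\eta=\eta(r)$, which in your language is exactly the choice $g\equiv 1$, and then exhibits a logarithmically diverging $f$ (a piecewise-linear cutoff) that drives the Rayleigh quotient to zero. So the overall architecture of your argument matches the paper's.

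The gap is in your ``main obstacle'' paragraph: you claim $g\equiv 1$ only works when $\overline{\text{Scal}_M}>2$, and that an HLS argument is needed otherwise. This is a phantom obstacle created by a curvature-convention mismatch. In the paper, the identification $\text{Ric}_M=\text{Scal}_M$ is made explicitly for $n=2$ (see the line ``$\text{Ric}_{C(M)}=\frac{\text{Ric}_M-(n-1)}{r^2}=\frac{\text{Scal}_M-1}{r^2}$'' in the proof, and the remark ``$\text{Scal}_M=\text{Ric}_M$ when $n=2$'' in Corollary~3.5). Under that convention the hypothesis $\text{Scal}_M\ge\lambda^{-2}>1$ means $K_M\ge\lambda^{-2}>1$, i.e.\ in the standard convention $\text{Scal}_M^{\mathrm{std}}=2K_M>2$. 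Hence your own reduced Schr\"odinger operator $-d_s^2-(\tfrac12\text{Scal}_M^{\mathrm{std}}-1)$ has potential $K_M-1\ge\lambda^{-2}-1>0$ pointwise, and $g\equiv 1$ already gives a strictly negative Rayleigh quotient. No further inequality is required.

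Consequently, your appeal to a ``Hardy--Littlewood--Sobolev type inequality'' in the $s$-variable is both unnecessary and, as stated, not a proof: you do not specify which HLS estimate, how it couples $\int(g')^2$ with $\int\text{Scal}_M\,g^2$, or why the constant comes out strictly below $1$. The paper's use of that phrase refers to the elementary log-divergence $\int_\varepsilon^R r^{-1}\,dr\to\infty$ (equivalently, the failure of the $2$-dimensional Hardy inequality), not to anything on $\Sigma^1$. Once you correct the convention, your outline collapses to the paper's one-paragraph computation.
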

\begin{proof}[Proof]
  Direct computation shows that: $C(\Sigma)$ is minimal in $C(M)$ iff $\Sigma$ is minimal in $M$. (See Appendix.) And the stability inequality becomes:
  $$\int_{C(\Sigma)}(\text{Ric}_{C(M)}+\lvert A_{C(\Sigma)}\rvert^2)\eta^2\le\int_{C(\Sigma)}\lvert\nabla{\eta}\rvert^2,\quad \forall \eta \in C_0^{\infty}(C(\Sigma)),$$
  where $A_{C(\Sigma)}$ is the second fundamental form and $\nabla$ is the gradient on $C(\Sigma)$.\\
  Note that when $n=2$,
  $$\text{Ric}_{C(M)}=\frac{\text{Ric}_M-(n-1)}{r^2}=\frac{\text{Scal}_M-1}{r^2}\geq \frac{1-\lambda^2}{\lambda^2r^2},$$ 
  and therefore we obtain:
  $$\int_{C(\Sigma)}\frac{1-\lambda^2}{\lambda^2r^2}\eta^2\le\int_{C(\Sigma)}\lvert\nabla{\eta}\rvert^2,\quad \forall \eta \in C_0^{\infty}(C(\Sigma)).$$
  We will show that this inequality fails when $n=2$ if we choose appropriate $\eta$. \par
  Indeed, we let $\eta(x)=\eta(r)$ depend only on $r=\text{dist}(x,O).$ Define
  \begin{equation*}
      \eta(r)=\left\{
    \begin{aligned}
        &\epsilon^{-1}r,& 0&\le r\le \epsilon,\\
        &1,&\epsilon&< r \le R,\\
        &2-R^{-1}r,&  R&<r.  
    \end{aligned}
    \right
    .
  \end{equation*}
  And the stablility inequality above for the test function $\eta$ becomes:
  $$C(n,\lambda)\int_0^{+\infty}\frac{\eta^2}{r^2}Vol(\Sigma_r)dr\le\int_0^{+\infty}(\eta')^2Vol(\Sigma_r)dr.$$
  When $n=2$, $Vol(\Sigma_r)=Cr$, and therefore we obtain:
  $$C(\lambda)\int_{\epsilon}^{R}\frac{1}{r}dr\le\frac{1}{\epsilon^2}\int_0^{\epsilon}rdr+\frac{1}{R^2}\int_R^{2R}rdr.$$
  Letting $\epsilon\rightarrow0$ and $R\rightarrow+\infty$, we get a contradiction. Hence $C(\Sigma^1)$ is unstable in $C(M^2)$ in the dimension 2 case.
\end{proof}

Now we want to extend the results to $\text{Scal}\geq 0$ ambient spaces.

\begin{theorem}\label{thm3.2}
    If $n=2$, $Scal_{C(M)}\geq 0$, $Scal_{C(M)}(p)>0$ at some point $p$, $M$ is complete, connected and oriented, then the 2-dimensional hypercone $C(\Sigma^1)$ can never be area-minimizing in the 3-dimensional $C(M^2)$.
\end{theorem}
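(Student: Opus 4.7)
The plan is to reduce the stability inequality for $C(\Sigma^{1})\subset C(M^{2})$ to a Schr\"odinger-type inequality on a flat cylinder by conformal rescaling, then choose test functions that extract the positive scalar curvature information. Since $\Sigma^{1}$ is $1$-dimensional and minimal in $M^{2}$, it is a geodesic, and a direct computation in the warped product metric $g_{C(M)}=dr^{2}+r^{2}g_{M}$ gives $|A|^{2}_{C(\Sigma)}=r^{-2}|A_{\Sigma}|^{2}=0$, so $C(\Sigma^{1})$ is totally geodesic in $C(M^{2})$. Combining the stability inequality with the Gauss equation and the fact that the intrinsic curvature $K_{C(\Sigma)}$ vanishes on the regular part of the flat cone, I obtain for $\eta\in C_{c}^{\infty}(C(\Sigma)\setminus\{O\})$
$$\int_{C(\Sigma)}\tfrac{1}{2}\mathrm{Scal}_{C(M)}\,\eta^{2}\le\int_{C(\Sigma)}|\nabla\eta|^{2}.$$

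Parametrising $C(\Sigma)\setminus\{O\}$ by $(r,s)\in(0,\infty)\times S^{1}_{L}$ with $L=|\Sigma^{1}|$ and induced metric $dr^{2}+r^{2}\,ds^{2}$, then substituting $t=\log r$, the surface becomes conformally the flat cylinder $(S^{1}_{L}\times\mathbb{R},\,ds^{2}+dt^{2})$; since the Dirichlet energy is conformally invariant in two dimensions and $r^{2}\mathrm{Scal}_{C(M)}$ depends only on the angular variable $s$, the inequality reduces to
$$\int_{\mathrm{cyl}}V(s)\,\eta^{2}\,ds\,dt\le\int_{\mathrm{cyl}}|\nabla\eta|^{2}\,ds\,dt, \qquad V(s):=\tfrac{1}{2}\left.r^{2}\mathrm{Scal}_{C(M)}\right|_{r=1}\ge 0.$$
Testing with $\eta=\psi(t)$ depending only on $t$ and letting the cutoff $\psi$ spread out on $[-T,T]$ sends $\int\psi'^{2}/\int\psi^{2}\to 0$, which forces $\int_{S^{1}_{L}}V\,ds\le 0$ and hence $V\equiv 0$ on $\Sigma^{1}$. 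Equivalently, $\mathrm{Scal}_{C(M)}$ must vanish identically along $C(\Sigma)$.

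The main obstacle is then the degenerate case $\mathrm{Scal}_{C(M)}\equiv 0$ along $C(\Sigma)$ while $\mathrm{Scal}_{C(M)}(p)>0$ at some $p\in C(M)\setminus C(\Sigma)$; a bumped sphere example (perturbing the round metric on one hemisphere while keeping the equator unchanged) shows this case genuinely occurs, and stability alone yields no contradiction. My plan is to invoke the full area-minimising hypothesis. Since $K_{M}\ge 1$ forces $(M,g_{M})$ to be a topological $S^{2}$ by Bonnet--Myers and Gauss--Bonnet, with $\int_{M}K_{M}=4\pi$, each of the two disks $D_{1},D_{2}\subset M$ bounded by the closed geodesic $\Sigma^{1}$ satisfies $\int_{D_{i}}K_{M}=2\pi$ (Gauss--Bonnet with geodesic boundary), so $\mathrm{Area}(D_{i})\le 2\pi$ with strict inequality on the side containing the positive-curvature point. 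Comparing the truncated cone $C(\Sigma)\cap\overline{B_{R}(O)}$, of area $R^{2}L/2$, with the flat cap $\{R\}\times D_{i}$, of area $R^{2}\mathrm{Area}(D_{i})$ and sharing the boundary $\{R\}\times\Sigma$, immediately contradicts minimality whenever $L/2>\mathrm{Area}(D_{i})$. When this bound does not suffice I would construct a more refined competitor, either by interpolating between the cone and the cap along level sets of $r$, or by a small conformal perturbation of $g_{M}$ designed to upgrade the scalar lower bound to the uniform one handled by Theorem \ref{cone}; this last step is where I expect the real difficulty of the theorem to lie.
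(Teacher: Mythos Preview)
Your first reduction is correct and matches the paper exactly: stability forces $\mathrm{Scal}_{C(M)}\equiv 0$ along $C(\Sigma)$, $M\cong S^2$ by Gauss--Bonnet, and $\Sigma$ is a closed geodesic. The gap is entirely in the competitor construction for the degenerate case.

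Your cap competitor $\{R\}\times D_i$ versus the truncated cone requires $L/2>\mathrm{Area}(D_i)$, but this inequality \emph{never} holds in the regime that matters. On the round sphere $L=2\pi$ and $\mathrm{Area}(D_i)=2\pi$, so $L/2=\pi<2\pi$; for any metric close to round the numbers barely move and the inequality still fails by a factor of roughly $2$. What Gauss--Bonnet (or the isoperimetric inequality the paper invokes) actually gives is only $L>\mathrm{Area}(D_i)$, with strict inequality forced by the point of positive curvature---a much weaker gain than what your cap needs. Your fallback of conformally perturbing $g_M$ changes the ambient space, so it does not address the original minimising problem; and ``interpolating between cone and cap'' is too vague to close the gap.

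The paper's resolution hinges on a phenomenon you have not used: $\mathbb{R}^2$ is area-minimising in $\mathbb{R}^3$ but \emph{not strictly} minimising in the sense of Hardt--Simon. Concretely, the authors build a rotationally symmetric competitor $G_f=G_{\delta,\alpha}\cup D_{\delta,\alpha}$ consisting of a catenoid piece near the boundary glued to a small graphical disk over the hemisphere. A careful asymptotic computation (their Lemmas 3.3--3.4) shows the catenoid costs only $\frac{1}{2}L\big(1-\alpha^2+\alpha^2\delta^2+o(\delta^2)\big)$, while the disk has area at most $\frac{1}{2}L\big(\alpha^2-(L^2/F^2)\alpha^2\delta^2+o(\delta^2)\big)$ where $F=\mathrm{Area}(D_i)$. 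Summing, the $\alpha^2\delta^2$ coefficient is $1-L^2/F^2<0$ precisely because of the isoperimetric strict inequality $L>F$, and this is what beats the cone for $\alpha,\delta$ small. The mechanism is that the catenoid loses only at order $o(\alpha^2\delta^2)$ relative to the flat cone (non-strict minimising), while the disk gains at order $\alpha^2\delta^2$ from the curvature. Your proposal is missing both this Hardt--Simon input and the isoperimetric lemma $L>F$ that replaces your unattainable $L>2F$.
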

\begin{proof}
    Note that the conditions are equivalent to $\text{Scal}_M\geq 1$ and $\text{Scal}_M(q)>1$ for some $q\in M$. Therefore, using the Gauss-Bonnet theorem, we know that $M$ is just $S^2$. And under some diffeomorphism, we can assume that $M=S^2$, and $\Sigma=S^1$ is the equator, without loss of generality. Note that here $M=S^2$ is equipped with a metric $g_M$ different from the standard metric.\par
    First observe that if such a minimizing cone $C(\Sigma)$ exists, then $\Sigma$ must be a geodesic on $M$, by the minimal property. And $\text{Scal}_{C(M)}$ restricted to $C(\Sigma)$ must vanish identically. Otherwise, the same argument in the proof of Theorem \ref{cone} is applicable and we will get a contradiction to the stability inequality. Therefore, we will assume these additional conditions from now on.\par
    By the volume comparison theorem (see \cite{Petersen2018}), since $\text{Scal}_M=\text{Ric}_M$ when $n=2$, we know that in the geodesic polar coordinate system, the volume form satisfies: 
    $$dvol_{g_M}\le dvol_{g_{S^2}}.$$
    So our strategy is to find a surface with boundary $\Sigma$ and smaller area than the cone $C(\Sigma)$, and it is natural to expect that such a surface exists. 
    
    Indeed, we will construct a disk $D_{\delta,\alpha}$ with strictly smaller area than the same Euclidean disk and connect $\partial D_{\delta,\alpha}$ with $\Sigma$ by a catenoid. By an observation that $\mathbb{R}^2$ is not strictly minimizing in $\mathbb{R}^3$, which was first discovered by Hardt and Simon in \cite{Hardt1985}, as well as the theorem above in the strictly positive scalar case, we can find a surface with smaller area when $\delta$ and $\alpha$ are sufficiently close to $0$, as desired. And we carry out the details in the following paragraphs.\par
    Let $(\theta,t)$ be coordinates on $M=S^2$, where $t$ represents the distance from $\Sigma=S^1\subseteq S^2=M$ under the metric $g_M$. And let $(r,\theta,t)$ be the spherical coordinates of $C(M)=C(S^2)=\mathbb{R}^3$, where $r$ is the distance from $O$. And consider the graph of a Lipschitz rotationally symmetric function $f$ (independent of $\theta$) over the upper hemisphere:
    $$G_f=\{(r,\theta,t)\in\mathbb{R}_+^3\mid r=f(t)\}.$$
    Using the coarea formula, we can compute:
    \begin{equation*}
        \text{Area}_{g_{C(M)}}(G_f)=\int_{0}^{+\infty}f(t)L(t)\frac{1}{\lvert\nabla^{C(M)}t\rvert}dt,
    \end{equation*}
    where 
    $$L(t)=\text{Length}\left(\{q\in M\mid t(q)=\text{dist}_{g_M}(q,\Sigma)=t\}\right).$$
    And since $t$ is a distance function on $M$, by direct computation, 
    $$\frac{1}{\lvert\nabla^{C(M)}t\rvert}=\sqrt{f'(t)^2+f(t)^2\frac{1}{\lvert\nabla^{M}t\rvert^2}}=\sqrt{f'(t)^2+f(t)^2}.$$
    Therefore,
    \begin{equation}\label{eq3.1}
    \text{Area}_{g_{C(M)}}(G_f)=\int_{0}^{+\infty}f(t)L(t)\sqrt{f'(t)^2+f(t)^2}dt.
    \end{equation}
    Note that in particular, if $g_{C(M)}$ is the Euclidean metric on $\mathbb{R}^3$ and $g_M$ is the standard metric on $S^2$,
    then
    \begin{equation*}
    L(t)=\left\{
    \begin{aligned}
        &2\pi\cos t,\quad &0\le t\le\frac{\pi}{2},\\
        &0,\quad &t>\frac{\pi}{2}.
    \end{aligned}
    \right
    .
    \end{equation*}
    Now the question is clear. We need to find an $f$ with $f(0)=1$ such that:
    \begin{equation}\label{eq3.2}
        \text{Area}(G_f)=\int_{0}^{+\infty}f(t)L(t)\sqrt{f'(t)^2+f(t)^2}dt<\frac{1}{2}L(0)=\text{Area}(C(\Sigma)).
    \end{equation}

    Actually, we will construct an $f$ satisfying:
    \begin{equation}\label{cat}
        f\cos t=a\cosh\left(\frac{f\sin t-b}{a}\right),\quad 0\le t\le\delta,
    \end{equation}
    \begin{equation}\label{disk}
        (f')^2+f^2=\frac{L(0)^2}{L^2}(f')^2,\quad\quad t\geq\delta,
    \end{equation}
    \begin{equation}\label{eq3.5}
        f(0)=1,\quad\quad f(\delta)=\alpha,
    \end{equation}
    where $a$, $b$ are constants chosen to satisfy (\ref{eq3.5}), and $\delta$, $\alpha$ are sufficiently small. Thus, $G_f$ will be the union of a catenoid and a disk. And we will show that compared to the cone $C(\Sigma)$, the catenoid has $o(\alpha^2\delta^2)$ larger area, while the disk has $O(\alpha^2\delta^2)$ smaller area. And therefore we are done.
    
    We divide the remaining proof into several lemmas below.

    Denote the catenoid in (\ref{cat}) by $G_{\delta,\alpha}$, and the disk in (\ref{disk}) by $D_{\delta,\alpha}$. Then we have:

    \begin{lemma}\label{lem3.3}
        \begin{equation*}
            \text{Area}(G_{\delta,\alpha})=\frac{1}{2}L(0)\left(1-\alpha^2+\alpha^2\delta^2+\frac{1}{-\ln\alpha}\alpha^2\delta^2+o(C_1\delta^2)\right),
        \end{equation*}
        where $C_1=C_1(\alpha)$ is a constant depending only on $\alpha$. 
    \end{lemma}

    \begin{lemma}\label{lem3.4}
        \begin{equation*}
            \text{Area}(D_{\delta,\alpha})\le\frac{1}{2}L(0)\left(\alpha^2-\frac{L(0)^2}{F^2}\alpha^2\delta^2+o(C_2\delta^2)\right),
        \end{equation*}
        where $C_2=C_2(\alpha)$ is a constant depending only on $\alpha$, and $F=\text{Area}_{g_M}(S_+^2)$ is the area of the disk bounded by $\Sigma$ on $S^2$.
    \end{lemma}

    \begin{lemma}[Isoperimetric Inequality \cite{Ionin1969}, \cite{Burago1973}]\label{isoperimetric}
        If $\Sigma$ is a closed curve and $F$ is a disk bounded by $\Sigma$ (whose area is also denoted by $F$) on a closed 2-dimensional Riemannian manifold $M$. Then
        $$L^2+2F\int_F(K_M-\lambda)_+-4\pi\chi F+\lambda F^2\geq0,\quad \forall \lambda\in\mathbb{R},$$
        where $L$ is the length of $\Sigma$, $K_M$ is the Gaussian curvature of $M$, and $\chi$ is the Euler characteristic of the domain $F$. (Here, $u_+:=\max\{u,0\}$ for any function $u$.)\par 
        In particular if we set $\lambda=1$, we will get $L(0)\geq F$ in Lemma \ref{lem3.4}.
        And actually the strict inequality $L(0)>F$ holds.
    \end{lemma}

    Combining Lemma \ref{lem3.3}, \ref{lem3.4} and \ref{isoperimetric}, we see that if we first choose $\alpha$ small enough and then let $\delta$ be sufficiently close to $0$, we will obtain a graph $G_f=G_{\delta,\alpha}\cup D_{\delta,\alpha}$ satisfying (\ref{eq3.2}). Therefore, we conclude that the cone $C(\Sigma)$ is not area-minimizing in $C(M)$, completing the proof.     
\end{proof}

It remains to check these lemmas. And we prove Lemma \ref{lem3.3} at first.

\begin{proof}[Proof of Lemma \ref{lem3.3}]
    Let us first consider the special case: 
    $$L(t)=L(0)\cos t, \quad t\in[0,\delta].$$
    Then the catenoid $G_{\delta,\alpha}$ defined in (\ref{cat}) is a minimizer of the area functional (\ref{eq3.1}), since everything is similar to the standard Euclidean space. One can check this fact by direct computation (but it seems not to be an easy task), or by changing the polar coordinates into the Cartesian coordinates and checking that the catenoid satisfies the minimal surface equation for graphs over $\mathbb{R}^2$.

    Therefore, $f$ satisfies the Euler-Lagrange equation:
    \begin{equation}
        \left(\frac{f'\cos{t}}{\sqrt{(f')^2+f^2}}\right)'-\frac{2f\cos{t}}{\sqrt{(f')^2+f^2}}=0.
    \end{equation}
    Hence, by virtue of (\ref{eq3.1}) and (\ref{eq3.5}), we obtain:
    \begin{align*}
        \text{Area}(G_{\delta,\alpha})&=L(0)\int_0^{\frac{\pi}{2}}\sqrt{(f')^2+f^2}f\cos t dt\\
        &=L(0)\int_0^{\delta}\left((f')^2+f^2\right)\left(\frac{f\cos t}{\sqrt{(f')^2+f^2}} \right) dt\\
        &=L(0)\int_0^{\delta}\frac{(f')^2f\cos t}{\sqrt{(f')^2+f^2}}+f^2\left(\frac{f\cos t}{\sqrt{(f')^2+f^2}} \right) dt\\
        &=\frac{1}{2}L(0)\int_0^{\delta}\frac{(2f'f)f'\cos t}{\sqrt{(f')^2+f^2}}+f^2\left(\frac{f'\cos t}{\sqrt{(f')^2+f^2}}\right)'dt\\
        &=\frac{1}{2}L(0)\left(\frac{f'f^2\cos t}{\sqrt{(f')^2+f^2}}\right)\Bigg|_{0}^{\delta}\\
        &=\frac{1}{2}L(0)\left(\frac{-f'(0)}{\sqrt{1+f'(0)^2}}-\frac{-f'(\delta)\alpha^2\cos\delta}{\sqrt{\alpha^2+f'(\delta)^2}}\right),
    \end{align*}
    Now by (\ref{cat}) and (\ref{eq3.5}), $a$, $b$ is uniquely determined by the equations:
    \begin{equation}\label{eq3.7}
        1=a\cosh\frac{-b}{a},
    \end{equation}
    \begin{equation}\label{eq3.8}
        \alpha\cos\delta=a\cosh\frac{\alpha\sin\delta-b}{a}.
    \end{equation}
    And by differentiating (\ref{cat}), we obtain the equations:
    \begin{equation}\label{eq3.9}
        f'(0)=\sinh\frac{-b}{a},
    \end{equation}
    \begin{equation}\label{eq3.10}
        f'(\delta)\cos\delta-\alpha\sin\delta=(f'(\delta)\sin\delta+\alpha\cos\delta)\sinh\frac{\alpha\sin\delta-b}{a}.
    \end{equation}
    It is convenient to introduce:
    $$-\cos H_{\delta}=\frac{f'(\delta)}{\sqrt{\alpha^2+f'(\delta)^2}},\quad \sin H_{\delta}=\frac{f(\delta)}{\sqrt{\alpha^2+f'(\delta)^2}}.$$
    Then (\ref{eq3.10}) becomes:
    \begin{equation}\label{eq3.11}
        -\cot (H_{\delta}-\delta)=\sinh\frac{\alpha\sin\delta-b}{a}.
    \end{equation}
    By virtue of (\ref{eq3.8}), this gives:
    \begin{equation}\label{eq3.12}
        H_{\delta}=\delta+\arcsin\frac{a}{\alpha\cos\delta}.
    \end{equation}
    Substituting (\ref{eq3.7}), (\ref{eq3.9}) and (\ref{eq3.12}) into the formula for $\text{Area}(G_{\delta,\alpha})$, we obtain:
    \begin{equation}\label{eq3.13}
        \text{Area}(G_{\delta,\alpha})=\frac{1}{2}L(0)\left(\sqrt{1-a^2}-\alpha^2\cos\delta\cos\left(\delta+\arcsin\frac{a}{\alpha\cos\delta}\right)\right).
    \end{equation}
    
    Now we try to represent $a$ in terms of $\delta$ and $\alpha$. Again, by (\ref{eq3.7}),
    $$e^\frac{b}{a}=\frac{1}{a}+\sqrt{\frac{1}{a^2}-1}.$$
    After substituting this into (\ref{eq3.8}), we obtain:
    \begin{align*}
        \alpha\cos\delta&=\frac{1}{2}a\left(e^{\frac{\alpha\sin\delta}{a}}e^{-\frac{b}{a}}+e^{-\frac{\alpha\sin\delta}{a}}e^{\frac{b}{a}}\right)\\
        &=\frac{1}{2}\left(\frac{a^2e^{\frac{\alpha\sin\delta}{a}}}{1+\sqrt{1-a^2}}+e^{-\frac{\alpha\sin\delta}{a}}(1+\sqrt{1-a^2})\right).
    \end{align*}
    Now if we fix $\alpha\in(0,1)$, then standard arguments in analysis show that: 
    $$\lim_{\delta\rightarrow0}a=0,
    \quad\lim_{\delta\rightarrow0}\frac{a}{\delta}=\frac{\alpha}{-\ln\alpha}.$$
    Equivalently,
    $$a=\frac{\alpha}{-\ln\alpha}\delta+o(\delta).$$
    Consequently, expand (\ref{eq3.13}) and we have:
    \begin{align*}
        \text{Area}(G_{\delta,\alpha})=&~\frac{1}{2}L(0)\Bigg[1-\frac{\alpha^2}{2\ln^2\alpha}\delta^2\\&~\phantom{\frac{1}{2}L(0)\Bigg[}-\alpha^2\left(1-\frac{1}{2}\delta^2\right)\left(1-\frac{1}{2}\delta^2\left(1+\frac{\alpha}{-\ln\alpha}\right)^2\right)+o(C_1\delta^2)\Bigg]\\
        =&~\frac{1}{2}L(0)\left(1-\alpha^2+\alpha^2\delta^2+\frac{1}{-\ln\alpha}\alpha^2\delta^2+o(C_1\delta^2)\right),
    \end{align*}
    where $C_1=C_1(\alpha)$ is a constant depending only on $\alpha$. This completes the proof in case $L(t)=L(0)\cos t.$

    Finally, for general $L(t)$, we have by the first and second variation formula (note that $L(t)$ is smooth in a neighborhood of $0$),
    $$L'(0)=0,\quad L''(0)=-L(0),$$
    since $\Sigma$ is a geodesic in $M$ and $\text{Scal}_M=1$ on $\Sigma$. Hence, we obtain:
    $$L(t)-L(0)\cos t=o(t^2).$$
    And consequently,
    \begin{align*}
        \text{Area}(G_{\delta,\alpha})&=\int_{0}^{\delta}f(t)\left(L(t)-L(0)\cos t+L(0)\cos t\right)\sqrt{f'(t)^2+f(t)^2}dt\\
        &=\frac{1}{2}L(0)\left(1-\alpha^2+\alpha^2\delta^2+\frac{1}{-\ln\alpha}\alpha^2\delta^2+o(C_1\delta^2)\right),
    \end{align*}
    where $C_1=C_1(\alpha)$ is a constant depending only on $\alpha$, completing the proof.    
\end{proof}

\begin{remark}
    In \cite{Hardt1985}, Hardt and Simon introduced the concept of \textit{strictly minimizing cones}. A cone $C^n$ is called \textit{strictly minimizing} in $\mathbb{R}^{n+1}$, if there exists $\Theta>0$ such that:
    $$\text{Area}(C\cap B_1)\le\text{Area}(S)-\Theta\epsilon^n,$$
    whenever $\epsilon>0$ and $S\subseteq\mathbb{R}^{n+1}\setminus B_{\epsilon}$ with $\partial S=\partial(C\cap B_1)$. And they pointed out that $\mathbb{R}^n$ is strictly minimizing in $\mathbb{R}^{n+1}$ if and only if $n\geq3$. Actually, we have confirmed this fact by direct computation above, when $n=2$. (For $n\geq 3$, see the discussion in Section 5.) And we find it interesting that this relates closely to the behavior of minimizing cones with isolated singularity.
\end{remark}

Now we turn to the proof of Lemma \ref{lem3.4}.
\begin{proof}[Proof of Lemma \ref{lem3.4}]
    By (\ref{disk}) and (\ref{eq3.5}), $f$ has an explicit form:
    \begin{equation}\label{eq3.14}
        f(t)=\alpha e^{-g_{\delta}(t)},\quad t\geq \delta,
    \end{equation}
    \begin{equation}\label{eq3.15}
        g_{\delta}(t)=\int_{\delta}^{t}\frac{L(s)ds}{\sqrt{L(0)^2-L(s)^2}}.
    \end{equation}
    And by virtue of (\ref{disk}), we have:
    \begin{align*}
        \text{Area}(D_{\delta,\alpha})&=\int_{\delta}^{+\infty}f(s)L(s)\sqrt{f'(s)^2+f(s)^2}ds\\
        &=\int_{\delta}^{+\infty}-L(0)f(s)f'(s)ds\\
        &=\frac{1}{2}L(0)\left(f(\delta)^2-\lim_{t\rightarrow+\infty}f(t)^2\right)\\
        &=\frac{1}{2}L(0)\left(\alpha^2-\lim_{t\rightarrow+\infty}f(t)^2\right).
    \end{align*}
    Now we define $F(t)=\text{Area}(\{q\in M\mid t(q)=\text{dist}_{g_M}(q,\Sigma)\le t\})$, and we have the following properties for $F(t)$ and $L(t)$, which are mentioned in \cite{Burago1973},\\
    (1) $F(t)$ is absolutely continuous, has one-sided derivatives everywhere, and
    $$F'(t)=L(t).$$
    (2) By the formula (12) in \cite{Burago1973} ($A=0$, $K=1$),
    $$F'(t)^2+F(t)^2\le F'(0)^2=L(0)^2.$$
    Therefore, we obtain in (\ref{eq3.15}),
    $$g_{\delta}(t)=\int_{\delta}^{t}\frac{L(s)ds}{\sqrt{L(0)^2-L(s)^2}}\le\int_{\delta}^{t}\frac{F'(s)ds}{F(s)}=\ln\frac{F(t)}{F(\delta)}.$$
    Note that $$F(\delta)=F(0)+F'(0)\delta+o(\delta)=L(0)\delta+o(\delta).$$
    Combining all these formulae, we obtain:
    \begin{align*}
        \text{Area}(D_{\delta,\alpha})
        &=\frac{1}{2}L(0)\left(\alpha^2-\lim_{t\rightarrow+\infty}\alpha^2e^{-2g_{\delta}(t)}\right)\\
        &\le\frac{1}{2}L(0)\left(\alpha^2-\lim_{t\rightarrow+\infty}\alpha^2\frac{F(\delta)^2}{F(t)^2}\right)\\
        &=\frac{1}{2}L(0)\left(\alpha^2-\frac{L(0)^2}{F^2}\alpha^2\delta^2+o(C_2\delta^2)\right),
    \end{align*}
    where $C_2=C_2(\alpha)$ is a constant depending only on $\alpha$, and $F=\text{Area}_{g_M}(S_+^2)$ is the area of the disk bounded by $\Sigma$ on $S^2$. This completes the proof of the lemma.
\end{proof}

Finally, we prove the isoperimetric inequality on $M$.
\begin{proof}[Proof of Lemma \ref{isoperimetric}]
    We refer the reader to \cite{Ionin1969} and \cite{Burago1973} for a proof of the inequality. And we only check that the strict inequality $L(0)>F$ holds in our settings.\par
    Indeed, we choose $F$ to be the disk $S_+^2$ bounded by $\Sigma=S^1$ on $M=S^2$. And if we let $\lambda=1$, combining the facts that $K_M\geq 1$, $\chi=1$, $\Sigma$ is a geodesic of $M$ (since $C(\Sigma)$ is minimal), as well as the Gauss-Bonnet theorem, we will get:
    $$0\le L(0)^2+2F\int_F(K_M-1)_+-4\pi\chi F+F^2=L(0)^2-F^2.$$
    If the equality holds, then equality holds in (5) and (7) in \cite{Burago1973}. And using the Gauss-Bonnet theorem again, this gives:
    $$\int_{F\setminus F(t)}(K_M-1)_+=\int_{F}(K_M-1)_+\text{ holds for all }t\geq0,$$
    which implies $K_M\equiv 1$ in $F=S_+^2$, a contradiction.
\end{proof}
In conclusion, we have finished the proof of Theorem \ref{thm3.2}.

Once we establish the non-existence of minimizing cone, we can apply the standard blow-up argument to manifolds with isolated singular points to obtain:

\begin{corollary}\label{cor}
    Let $M^3$ be a 3-dim manifold with isolated singularity set $S$. Suppose the tangent cone $C_p$ of $M^3$ at each singular point $p$ has the conical metric described at the beginning of this section. And suppose $\eta_{p,\lambda_i,\#}M^3$ converges to its tangent cone $C_p$ in the $C^1$ sense locally away from $p$. If $C_p$ has connected and orientable cross section, and $\text{Scal}_{C_p}\geq0$ holds for all these tangent cones $C_p$, then all the area-minimizing 2-dim hypersurfaces $\Sigma^2$ in $M^3$ are smooth.
\end{corollary}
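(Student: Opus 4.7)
The plan is to confine singularities of $\Sigma^2$ to the isolated set $S$, and then rule out singularities at each $p\in S$ by a two-step blow-up that reduces to Theorem \ref{thm3.2}. Off of $S$ the ambient $M^3$ is a smooth Riemannian 3-manifold, so the classical regularity theorem for area-minimizing hypersurfaces in smooth 3-manifolds (Fleming) yields that $\Sigma$ is smooth on $\text{spt}(\Sigma)\setminus S$; hence $\text{Sing}(\Sigma)\subseteq S\cap\text{spt}(\Sigma)$.

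Assume for contradiction that $p\in S$ is a singular point of $\Sigma$. Choose $\lambda_i\to 0$ and set $M_i:=\eta_{p,\lambda_i,\#}M$, $\Sigma_i:=\eta_{p,\lambda_i,\#}\Sigma$. By hypothesis, $M_i\to C_p$ locally in $C^1$ away from the vertex $O$, and each $\Sigma_i$ is a minimizing boundary in $M_i$. By the compactness Lemma \ref{cpt}, after passing to a subsequence, $\Sigma_i\rightharpoonup T_\infty$ and $\mu_{\Sigma_i}\rightharpoonup\mu_{T_\infty}$ as Radon measures, with $T_\infty$ a minimizing boundary in $C_p$; the lower density bound of Proposition \ref{lowerdensity} guarantees $O\in\text{spt}(T_\infty)$ and $T_\infty\neq 0$. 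Since $C_p$ is itself a metric cone, the monotonicity formula for minimizing boundaries in metric cones is available at the vertex, so a further blow-up $T_\infty':=\lim_j\eta_{O,\rho_j,\#}T_\infty$ yields a dilation-invariant minimizing boundary in $C_p$, namely a hypercone $C(\Sigma_\infty)$ whose link $\Sigma_\infty$ is a closed geodesic in the cross-section $(M_p,g_{M_p})$; the ambient is unchanged by this second blow-up since $\eta_{O,\rho_j,\#}C_p=C_p$.

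Now apply Theorem \ref{thm3.2} to $C(\Sigma_\infty)\subset C_p$. By assumption $\text{Scal}_{C_p}\ge 0$ and the cross-section is connected and orientable. If $\text{Scal}_{C_p}$ is strictly positive at some point, Theorem \ref{thm3.2} directly forbids $C(\Sigma_\infty)$ from being area-minimizing, a contradiction. If instead $\text{Scal}_{C_p}\equiv 0$, the scalar-curvature computation of the proof of Theorem \ref{thm3.2} forces the cross-section $(M_p,g_{M_p})$ to have constant Gaussian curvature $1$; combined with closedness, connectedness, and orientability, this identifies it with the round unit $2$-sphere and hence $C_p$ with $\mathbb{R}^3$. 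The $C^1$ convergence $M_i\to\mathbb{R}^3$ then places $\Sigma$ locally in the classical smooth setting near $p$, where $2$-dimensional minimizers are smooth by Fleming. Either way we reach a contradiction, and so $\Sigma$ has no singularities.

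The main obstacle is that no monotonicity formula is available directly in $M^3$, so the first blow-up $T_\infty$ need not itself be a cone; this forces the iterated blow-up inside $C_p$ described above, exploiting that $\eta_{O,\rho,\#}C_p=C_p$ for every $\rho>0$. A secondary subtlety is the degenerate case $\text{Scal}_{C_p}\equiv 0$, which cannot be handled by Theorem \ref{thm3.2} itself and instead requires identifying $C_p$ with $\mathbb{R}^3$ via Gauss--Bonnet in order to invoke the classical Euclidean regularity theory. This iterated blow-up strategy mirrors that of \cite{Francesco2023}.
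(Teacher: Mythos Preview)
Your proof is correct and follows essentially the same route as the paper: reduce to singularities in $S$ by classical regularity on the smooth part, blow up at a putative singular point $p$ using Lemma \ref{cpt} and the conical monotonicity formula to obtain a minimizing hypercone in $C_p$, then invoke Theorem \ref{thm3.2} unless $\text{Scal}_{C_p}\equiv 0$, in which case the cross section is the round $S^2$ and $C_p=\mathbb{R}^3$. The only cosmetic difference is that the paper disposes of the degenerate case by observing that $C_p=\mathbb{R}^3$ forces $p\notin S$ outright (so $p$ was never singular in $M^3$), rather than by appealing to Fleming's theorem after the $C^1$ convergence; and the paper packages the two-step blow-up into a forward reference to Corollary \ref{cor2} rather than spelling it out as you do.
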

\begin{proof}
    Suppose on the contrary that $\Sigma^2$ is area-minimizing in $M^3$, but not smooth. Then by the classical regularity theory of minimal surfaces (see \cite{Giusti1984} or \cite{Simon1983} for a review), we know that $\Sigma^2 \cap S \neq \emptyset$. Otherwise $\Sigma^2$ will be a singular area-minimizing hypersurface in the smooth manifold $\overline{M^3}\setminus S$, which is impossible. Now choose $p\in \Sigma^2 \cap S$ and blow up $M^3$ at $p$. By Lemma \ref{cpt}, as well as the monotonicity formula in the conical case which will be established in the next section, we will get an area-minimizing cone in the tangent cone $C_p$ of $M^3$ at $p$. Since $\text{Scal}_{C_p}\geq0$, the cross section $N$ of $C_p$ (closed manifold) has $\text{Scal}\geq1$. If the curvature of the cross section $N$ is identically 1, then by the classification of constant curvature spaces (see \cite{DoCarmo1992} Chapter 8, Proposition 4.4), we know that $N$ is the standard $S^2$. Thus $C_p$ is just the Euclidean $\mathbb{R}^3$. In this case, obviously $p$ cannot be a singular point. Hence $\text{Scal}_N(q)>1$ holds for some $q\in N$. And then Theorem \ref{thm3.2} applies and we get a contradiction.
\end{proof}

\section{Regularity Results}
In this section, we extend the regularity results obtained in Section 3 to general dimensions and carry out the details of blow-up arguments in the proof of Theorem \ref{main1}. 

We first extend the usual monotonicity formula for minimal hypersurfaces to the case when the ambient space is a cone. This is essentially the same as Theorem 3.7 in \cite{Yau2017}.

\begin{lemma}\label{mono}
   If $\Sigma^n$ is an area-minimizing hypersurface in a cone $C^{n+1}$ centered at $0$, then the usual monotonicity formula for $\Sigma^n$ at $0$ still holds. That is, for all $0<s<t$, 
   $$\frac{Vol(B_t\cap \Sigma)}{t^n}-\frac{Vol(B_s\cap \Sigma)}{s^n}=\int_{(B_t\setminus B_s)\cap \Sigma}\lvert x \rvert^{-n-2}\lvert x^{\bot}\rvert^2 dVol(x),$$
   where $x^{\bot}$ is the component of the position vector $x$ perpendicular to $\Sigma^n$ in $C^{n+1}$. 
\end{lemma}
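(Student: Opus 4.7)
The plan is to reduce the proof to the classical Euclidean monotonicity argument by exploiting the conical structure of the ambient space. The essential observation is that, since $C$ is isometrically embedded in some $\mathbb{R}^{n+l}$ and satisfies $\eta_{0,\lambda,\#}C=C$ for every $\lambda>0$, the Euclidean dilation $y\mapsto \lambda y$ maps $C$ to itself. Differentiating in $\lambda$ at $\lambda=1$ shows that the Euclidean position vector field $X(y):=y$ is tangent to $C$ at every smooth point and vanishes at the vertex $0$. Consequently, for any $\phi\in C^\infty_c([0,\infty))$, the vector field $Y:=\phi(|y|)X$ is smooth on the ambient $\mathbb{R}^{n+l}$ and tangent to $C$, so its Euclidean flow preserves $C$ and defines an admissible compactly supported variation of $\Sigma$ inside $C$.

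Since $\Sigma$ is area-minimizing in $C$, the first variation formula for integral currents yields
\[
  \int_\Sigma \mathrm{div}_\Sigma Y\, d\mu_\Sigma \;=\; 0.
\]
At any smooth point of $\Sigma$, using $\nabla^{\mathbb{R}^{n+l}}X=\mathrm{id}$ and the orthogonal decomposition $X=X^T+X^\perp$ with respect to $T_x\Sigma\subset\mathbb{R}^{n+l}$, a direct computation gives
\[
  \mathrm{div}_\Sigma Y \;=\; n\,\phi(|x|) \;+\; \phi'(|x|)\,\frac{|x|^2-|x^\perp|^2}{|x|}.
\]
Since $X$ is already tangent to $C$, the vector $x^\perp$ coincides with the normal component of $X$ to $\Sigma$ inside $C$, so this is the same quantity appearing in the statement.

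Next I would choose $\phi(r)=\gamma(r/\rho)$ with $\gamma$ a smooth cutoff, substitute into the identity above, and then let $\gamma$ decrease to $\chi_{[0,1]}$. By the slicing theory and the co-area formula, the resulting identity is equivalent to the differential form
\[
  \frac{d}{d\rho}\!\left(\frac{\mu_\Sigma(B_\rho)}{\rho^n}\right) \;=\; \frac{d}{d\rho}\int_{\Sigma\cap B_\rho} \frac{|x^\perp|^2}{|x|^{n+2}}\,d\mu_\Sigma
\]
on $(0,\infty)$, exactly as in the standard Euclidean derivation (cf.\ \cite{Simon1983}). Integrating from $s$ to $t$ then gives the stated equality.

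The only place where the singular ambient structure could create a problem is the justification of the first variation at the vertex and at $\mathrm{Sing}(\Sigma)$. However, $Y$ vanishes at $0$ so the flow is well defined on all of $\mathbb{R}^{n+l}$, and $\mathrm{Sing}(\Sigma)$ is $\mu_\Sigma$-negligible, so the integral current first variation identity applies verbatim. Thus the whole argument is an unchanged copy of the classical Euclidean one, with the tangency of $X$ to $C$ (a feature unique to cones) playing the role of the trivial tangency of the radial field to $\mathbb{R}^{n+1}$.
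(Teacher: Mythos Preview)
Your proposal is correct and follows essentially the same route as the paper: both exploit that the Euclidean position field $X$ is tangent to the cone, test with $Y=\phi(|x|)X$, compute $\mathrm{div}_\Sigma Y = n\phi + r\phi'(1-|x^\perp|^2/|x|^2)$, and run the standard differentiation-in-$\rho$ argument. The only stylistic difference is that the paper inserts an explicit inner cutoff $\varphi$ supported on $[\delta,1+\epsilon]$ (to keep $Y$ compactly supported in the regular part of $\Sigma$) and passes $\delta\to 0$ at the end, whereas you invoke directly that the radial flow of $Y$ is globally defined on $\mathbb{R}^{n+l}$, preserves the cone, and that the first variation identity for integral currents is insensitive to the $\mu_\Sigma$-null singular set; both justifications are valid.
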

\begin{proof}
    We only need to verify that $\text{div}_{\Sigma}X=n$ still holds in this case, where $X$ is the position vector field. And then the remaining part is the same as the classical proof of the monotonicity formula. (See \cite{Simon1983}. And note that $X$ belongs to the tangent space of the cone $C$.)\par 
    To see this, let $\{e_i\}_{i=1}^{n}$ be an orthonormal  of $\Sigma$, then
    $$\text{div}_{\Sigma}X=\langle \nabla_{e_i}^{\Sigma}X, e_i \rangle=\langle \nabla_{e_i}^{C}X, e_i \rangle=\langle \nabla_{e_i}X, e_i \rangle=n,$$
    where the last gradient is taken in the ambient Euclidean space $\mathbb{R}^{n+l}$.\par
    
    In the following discussion, we will assume $0$ is an isolated singular point of the cone $C$, for simplicity. And the general case follows by multiplying an appropriate cut-off function near singularities in a similar way.
    
    Define $Y=\gamma(r)X$, where $r(x)=\text{dist}(x,0)$, $\gamma:\mathbb{R}\rightarrow[0,1]$ is a cut-off function to be determined. Then,
    \begin{equation}\label{eq4.1}
    \begin{aligned}
        \text{div}_{\Sigma}Y&=\gamma(r)\text{div}_{\Sigma}X+\gamma'(r)\langle \nabla^{\Sigma}r, X \rangle\\
        &=n\gamma(r)+r\gamma'(r)\langle \nabla^{\Sigma}r, \nabla r \rangle\\
        &=n\gamma(r)+r\gamma'(r)\langle \nabla^{\Sigma}r, \nabla^{\Sigma} r \rangle\\
        &=n\gamma(r)+r\gamma'(r)\left(1-\frac{\lvert X^{\bot}\rvert^2}{\lvert X\rvert^2}\right).
    \end{aligned}
    \end{equation}
    Now take $\delta,\sigma,\epsilon\in(0,s/2)$ and a $C^1$ function $\varphi:\mathbb{R}\rightarrow[0,1]$ supported on $[\delta,1+\epsilon]$, with $\varphi\equiv1$ on $[\delta+\sigma,1]$, $0\le\varphi'\le2/\sigma$ on $[\delta,\delta+\sigma]$, $\varphi'\le0$ on $[1,1+\epsilon]$. And then let $\gamma(r)=\varphi(r/t)$. Since
    $$-t\frac{\partial}{\partial t}\left(\varphi\left(\frac{r}{t}\right)\right)=\frac{r}{t}\varphi'\left(\frac{r}{t}\right)=r\gamma'(r),$$
    integrating (\ref{eq4.1}), we obtain:
    $$n\int_{\Sigma}\varphi(r/t)d\mu-t\frac{d}{dt}\left(\int_{\Sigma}\varphi(r/t)d\mu\right)+t\int_{\Sigma}\frac{\partial}{\partial t}\varphi(r/t)\frac{\lvert X^{\bot}\rvert^2}{\lvert X\rvert^2}d\mu=0,$$
    where we use $\int_{\Sigma}\text{div}_{\Sigma}Y=0$, because $\Sigma$ is minimal. (Note that $Y$ is a tangent vector field supported on a compact set of the regular part of $\Sigma$.) Thus, multiplying by $t^{-n-1}$, we have:
    $$\frac{d}{dt}\left(t^{-n}\int_{\Sigma}\varphi(r/t)d\mu\right)=t^{-n}\int_{\Sigma}\frac{\partial}{\partial t}\varphi(r/t)\frac{\lvert X^{\bot}\rvert^2}{\lvert X\rvert^2}d\mu.$$
    Since 
    $$(1+\epsilon)^{-n}r^{-n}\le t^{-n}\le r^{-n},\quad\text{when }\varphi'(r/t)\le0;$$
    $$\delta^nr^{-n}\le t^{-n}\le(\delta+\sigma)^n r^{-n},\quad\text{when }\varphi'(r/t)\geq0,$$
    we have:
    \begin{align*}
    (1+\epsilon)^{-n}\int_{\Sigma}r^{-n}\frac{\partial}{\partial t}\varphi(r/t)\frac{\lvert X^{\bot}\rvert^2}{\lvert X\rvert^2}d\mu&\le\frac{d}{dt}\left(t^{-n}\int_{\Sigma}\varphi(r/t)d\mu\right)\\
    &\le \int_{\Sigma}r^{-n}\frac{\partial}{\partial t}\varphi(r/t)\frac{\lvert X^{\bot}\rvert^2}{\lvert X\rvert^2}d\mu+C(t)(\delta+\sigma)^n,
    \end{align*}
    where $C(t)$ is a constant depending only on $t$.
    By integration with respect to $t$ from $s$ to $t_0$, we obtain:
    \begin{align*}
        &(1+\epsilon)^{-n}\int_{\Sigma}r^{-n}(\varphi(r/t_0)-\varphi(r/s))\frac{\lvert X^{\bot}\rvert^2}{\lvert X\rvert^2}d\mu\\
        \le &~t_0^{-n}\int_{\Sigma}\varphi(r/t_0)d\mu-s^{-n}\int_{\Sigma}\varphi(r/s)d\mu\\
        \le &\int_{\Sigma}r^{-n}(\varphi(r/t_0)-\varphi(r/s))\frac{\lvert X^{\bot}\rvert^2}{\lvert X\rvert^2}d\mu+C(\delta+\sigma)^n.
    \end{align*}
    Letting $\sigma,\epsilon\rightarrow0$, we obtain:
    \begin{align*}
        \int_{(B_{t_0}\setminus B_s)\cap \Sigma}\lvert x \rvert^{-n-2}\lvert x^{\bot}\rvert^2 dVol(x)&\le\frac{Vol((B_{t_0}\setminus B_{\delta})\cap \Sigma)}{t_0^n}-\frac{Vol(B_s\setminus B_{\delta})\cap \Sigma)}{s^n}\\
        &\le\int_{(B_{t_0}\setminus B_s)\cap \Sigma}\lvert x \rvert^{-n-2}\lvert x^{\bot}\rvert^2 dVol(x)+C\delta^2,
    \end{align*}
    Finally, letting $\delta\rightarrow0$, we get the desired monotonicity formula:
    $$\frac{Vol(B_{t_0}\cap \Sigma)}{t_0^n}-\frac{Vol(B_s\cap \Sigma)}{s^n}=\int_{(B_{t_0}\setminus B_s)\cap \Sigma}\lvert x \rvert^{-n-2}\lvert x^{\bot}\rvert^2 dVol(x),\quad 0<s<t_0.$$
\end{proof}

\begin{corollary}\label{cor2}
    Let $\Sigma^n$ be an area-minimizing hypersurface in a manifold $M^{n+1}$ with singularities. Suppose the tangent cone of $M^{n+1}$ at a given point $p$ is a metric cone $C_p$ (i.e., $C_p$ is scaling invariant), and $\eta_{p,\lambda_i,\#}M$ converges to its tangent cone $C_p$ in the $C^1$ sense locally away from $\text{Sing}(C_p)$. Then the blow-up limit $\Sigma_{\infty}$ of $\Sigma$ is minimizing in $C_p$. Moreover, if we continue to blow up $\Sigma_{\infty}$ in $C_p$ at $0$, then we will obtain a minimizing cone $\Sigma_{\infty}'$ in $C_p$.
\end{corollary}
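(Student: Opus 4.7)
The plan is to invoke Lemma \ref{cpt} twice in succession and then apply the monotonicity formula of Lemma \ref{mono} to force the second iterated blow-up to be scaling invariant.

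\emph{First blow-up.} Set $A_j := \eta_{p,\lambda_j,\#}M$ and $T_j := \eta_{p,\lambda_j,\#}\Sigma$. Scaling preserves the minimizing property, so each $T_j$ is minimizing in $A_j$. The hypothesis supplies $A_j \to C_p$ in the current sense with locally $C^1$ convergence away from $\text{Sing}(C_p)$, and the uniform density bound $\Theta_{A_j}(x,r) \le \Theta$ required by Lemma \ref{cpt} is scale-invariant and inherited from condition (6) of Definition \ref{admissible}. Lemma \ref{cpt} then yields, along a subsequence, a weak limit $\Sigma_\infty = \partial \llbracket E_\infty \rrbracket$ that is minimizing in $C_p$, together with $\mu_{T_j} \rightharpoonup \mu_{\Sigma_\infty}$.

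\emph{Second blow-up and cone property.} Pick $\rho_j \to 0$. Because $C_p$ is a metric cone, $\eta_{0,\rho_j,\#}C_p = C_p$, so taking $A_j' := C_p$ and $T_j' := \eta_{0,\rho_j,\#}\Sigma_\infty$, each $T_j'$ is minimizing in $C_p$ and the hypotheses of Lemma \ref{cpt} hold trivially. Lemma \ref{cpt} gives a minimizing limit $\Sigma_\infty'$ in $C_p$ with $\mu_{T_j'} \rightharpoonup \mu_{\Sigma_\infty'}$. To identify $\Sigma_\infty'$ as a cone, Lemma \ref{mono} applied to $\Sigma_\infty$ (minimizing in the cone $C_p$) shows that $r \mapsto Vol(B_r \cap \Sigma_\infty)/r^n$ is nondecreasing, so $\Theta_0 := \lim_{r\to 0^+} Vol(B_r \cap \Sigma_\infty)/r^n$ exists. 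For every fixed $r>0$,
$$\frac{Vol(B_r \cap T_j')}{r^n} = \frac{Vol(B_{\rho_j r} \cap \Sigma_\infty)}{(\rho_j r)^n} \;\longrightarrow\; \Theta_0 \quad \text{as } j \to \infty.$$
The Radon-measure convergence $\mu_{T_j'} \rightharpoonup \mu_{\Sigma_\infty'}$ yields $Vol(B_r \cap T_j') \to Vol(B_r \cap \Sigma_\infty')$ for every $r$ with $\mu_{\Sigma_\infty'}(\partial B_r)=0$, so $Vol(B_r \cap \Sigma_\infty')/r^n = \Theta_0$ on a dense set of radii; since Lemma \ref{mono} makes this function nondecreasing in $r$, the equality extends to every $r>0$. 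Applying Lemma \ref{mono} to $\Sigma_\infty'$ then forces
$$\int_{(B_t \setminus B_s) \cap \Sigma_\infty'} |x|^{-n-2} |x^\perp|^2 \, dVol(x) = \frac{Vol(B_t \cap \Sigma_\infty')}{t^n} - \frac{Vol(B_s \cap \Sigma_\infty')}{s^n} = 0$$
for all $0<s<t$, hence $x^\perp \equiv 0$ on $\text{Reg}(\Sigma_\infty')$, meaning the position vector is tangential everywhere and $\Sigma_\infty'$ is scaling invariant.

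The delicate step is the transfer of densities from the sequence to the limit: weak measure convergence only gives equality of masses at radii where $\mu_{\Sigma_\infty'}(\partial B_r)=0$, so the monotonicity of both sides in $r$ is essential for extending the conclusion to every $r$. A minor point to verify in the admissible framework: the corollary implicitly assumes enough structure on $M$ (density bound from Definition \ref{admissible}(6) and $0 \in \text{spt}(\Sigma_\infty)$, the latter coming from $p \in \text{spt}(\Sigma)$ together with the lower density estimate of Proposition \ref{lowerdensity}) so that both Lemma \ref{cpt} and Lemma \ref{mono} apply at the relevant stages.
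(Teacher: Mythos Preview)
Your argument is correct and follows essentially the same route as the paper: apply Lemma \ref{cpt} to each blow-up to retain the minimizing property, then use the monotonicity formula of Lemma \ref{mono} to show the density ratio of $\Sigma_\infty'$ is constant and hence $x^\perp\equiv 0$. You are in fact more careful than the paper about the passage from $\mu_{T_j'}\rightharpoonup\mu_{\Sigma_\infty'}$ to equality of the density ratios (restricting first to continuity radii and then using monotonicity), a point the paper compresses into the phrase ``by the weak convergence.''
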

\begin{proof}
    The proof is standard using the monotonicity formula (Lemma \ref{mono}) and the compactness lemma $\ref{cpt}$. (See \cite{Simon1983} or \cite{Giusti1984}.)\par
    By assumption, the (first) blow-up limit of $\Sigma$ is $\Sigma_{\infty}$. That is, there exists a subsequence $\{\lambda_i\}$ (not labeled) such that $\Sigma_i:=\eta_{p,\lambda_i,\#}\Sigma\rightharpoonup \Sigma_{\infty}$. By the monotonicity formula for $\Sigma_{\infty}$ in $C_p$, 
    $$\Theta_{\Sigma_{\infty}}(0,r)=\frac{Vol(B_r\cap \Sigma_{\infty})}{\omega_nr^n}\quad\text{ is increasing in }r,$$
    and therefore,
    $$\Theta_{\Sigma_{\infty}}(0)=\lim_{r\rightarrow0}\Theta_{\Sigma}(0,r)\quad\text{ exists.}$$
    
    Now since $\Sigma_{\infty}'$ is the blow-up limit of $\Sigma_{\infty}$ in $C_p$ at $0$, by the weak convergence, for any $0<s<t$:
    $$\frac{Vol(B_s(p)\cap \Sigma_{\infty})}{\omega_ns^n}=\frac{Vol(B_t(p)\cap \Sigma_{\infty})}{\omega_nt^n}=\Theta_{\Sigma_{\infty}}(0).$$
    By the monotonicity formula, this implies that $\lvert x^{\bot}\rvert=0$ for any $x\in \Sigma_{\infty}'$, which means that $\Sigma_{\infty}'$ is a cone. And the minimizing property of $\Sigma_{\infty}$ and  $\Sigma_{\infty}'$ comes from Lemma \ref{cpt}. (Note that in the second blow-up step, the ambient space $C_p$ is invariant under scaling.)
\end{proof}

Next, we show that the construction of tangent cylinders and the classical dimension reduction process are still applicable, provided the tangent cones of the ambient space have good properties.

\begin{lemma}\label{cylinder}
    Let $C$ be a metric cone, i.e., $\eta_{0,\lambda,\#}C=C$ for any $\lambda>0$. And suppose that the tangent cone $C'$ of $C$ at a point $0\neq p\in C$ is always a metric cone. (The tangent cones may not be unique, but they are all cones.) Then $C'$ is a tangent cylinder of $C$ at $p$. That is, $\eta_{kp,1,\#}C'=C'$, for all $k\in\mathbb{R}$. 
\end{lemma}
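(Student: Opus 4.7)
The statement asks for translation invariance of $C'$ along the ray through $p$, namely $C' - kp = C'$ (as currents) for every $k \in \mathbb{R}$, where $\eta_{kp,1}(y) = y - kp$. My approach is purely algebraic in the scalings, using the cone property of $C$ at the origin to convert a post-composed translation into a rescaling of the blow-up parameter.

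Pick a sequence $\lambda_i \downarrow 0$ along which $\eta_{p,\lambda_i,\#} C \to C'$. Since $C$ is scaling-invariant about $0$, as currents one has
$$\eta_{p,\lambda_i,\#} C \;=\; \tfrac{1}{\lambda_i}(C - p) \;=\; \tfrac{1}{\lambda_i} C - \tfrac{p}{\lambda_i} \;=\; C - \tfrac{p}{\lambda_i}.$$
Therefore, for $i$ large enough that $1 + k\lambda_i > 0$,
$$\eta_{kp,1,\#}\bigl(\eta_{p,\lambda_i,\#} C\bigr) \;=\; C - \tfrac{p}{\lambda_i} - kp \;=\; C - \tfrac{p}{\mu_i} \;=\; \eta_{p,\mu_i,\#} C, \qquad \mu_i \,:=\, \tfrac{\lambda_i}{1+k\lambda_i}.$$
Observe that $\mu_i \downarrow 0$ and, crucially, $\mu_i/\lambda_i = 1/(1+k\lambda_i) \to 1$.

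Next, composing scalings gives $\eta_{p,\mu_i}(y) = \eta_{0,\mu_i/\lambda_i}\circ\eta_{p,\lambda_i}(y)$, hence at the level of currents
$$\eta_{p,\mu_i,\#} C \;=\; \eta_{0,\,\mu_i/\lambda_i,\#}\bigl(\eta_{p,\lambda_i,\#} C\bigr).$$
Since $\mu_i/\lambda_i \to 1$, the linear maps $\eta_{0,\mu_i/\lambda_i}$ tend to the identity in $C^{\infty}_{\mathrm{loc}}$, so for every smooth compactly supported test form $\omega$ the pullback $\eta_{0,\mu_i/\lambda_i}^{\ast}\omega$ converges to $\omega$ in $C^0$ on compacts. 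Combined with $\eta_{p,\lambda_i,\#} C \rightharpoonup C'$ and the local mass bounds available for the blow-up sequence (a standard consequence of the cone structure of $C$), this yields $\eta_{p,\mu_i,\#} C \rightharpoonup C'$. Passing to the limit in the identity $\eta_{kp,1,\#}(\eta_{p,\lambda_i,\#} C) = \eta_{p,\mu_i,\#} C$ then gives $\eta_{kp,1,\#} C' = C'$, which is the desired cylinder property; since $k$ was arbitrary, we are done.

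The only subtle point is step three: tangent cones at $p$ need not be unique, so one cannot simply say ``the limit of a sequence of blow-ups at $p$ is $C'$''. The whole argument rests on the identity $\eta_{p,\mu_i,\#} C = \eta_{0,\mu_i/\lambda_i,\#}(\eta_{p,\lambda_i,\#} C)$ together with $\mu_i/\lambda_i \to 1$, which forces the new sequence to share the same limit as the old one rather than producing a potentially different tangent cone. Everything else is a routine continuity check for pushforward under nearly-isometric maps.
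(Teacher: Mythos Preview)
Your proof is correct and takes a genuinely different route from the paper's. The paper first shows that $C'-p$ is itself a tangent cone $C''$ of $C$ at $p$ (by rewriting $\frac{C-p}{\lambda_i}-p=\frac{(\lambda_i+1)(C-p)}{\lambda_i}$ and passing to a subsequence), and then invokes the hypothesis that \emph{every} tangent cone at $p$ is a metric cone: since both $C'$ and $C''$ are scaling-invariant about $0$, the relation $C'-p=C''$ upgrades to $C'-kp=C''$ for all $k>0$, whence the translation invariance. Your argument bypasses the auxiliary $C''$ entirely: by factoring $\eta_{p,\mu_i,\#}C=\eta_{0,\mu_i/\lambda_i,\#}(\eta_{p,\lambda_i,\#}C)$ with $\mu_i/\lambda_i\to 1$, you force the perturbed blow-up sequence to share the \emph{same} limit $C'$, so no second tangent cone appears and the hypothesis ``all tangent cones at $p$ are metric cones'' is never used. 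In that sense your proof is both more direct and strictly more general---it shows that any blow-up limit of a metric cone at a nonzero point splits off the radial line, with no assumption on the structure of other blow-up limits. One small quibble: the local mass bound you invoke is not really ``a consequence of the cone structure of $C$'' but of the finite upper density of $C$ at $p$; in the paper's setting this follows from the uniform density bound in condition~(6) of Definition~\ref{admissible}.
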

\begin{proof}
    We have by the assumptions that for some $\lambda_i\rightarrow0$,
    $$\eta_{p,\lambda_i,\#}C=\frac{C-p}{\lambda_i}\rightharpoonup C'.$$
    Since $C$ is a cone,
    $$C'-p=\lim_{i\rightarrow\infty}\frac{C-p}{\lambda_i}-p=\lim_{i\rightarrow\infty}\frac{C-(\lambda_i+1)p}{\lambda_i}=\lim_{j\rightarrow\infty}\frac{(\lambda_{i_j}+1)(C-p)}{\lambda_{i_j}}=C'',$$
    for some other tangent cone $C''$. Note that by assumption, $C'$ and $C''$ are both cones. Therefore, for any $\lambda>0$,
    $$\eta_{\lambda^{-1}p,1,\#}C'=\eta_{p,\lambda,\#}C'=\eta_{0,\lambda,\#}C''=C''=\eta_{p,1,\#}C'.$$
    This implies that for any $k\in\mathbb{R}$, $\eta_{kp,1,\#}C'=C'$. Thus, we obtain $C'$ is a tangent cylinder, as required.
\end{proof}

We remark here that using this lemma, the splitting property in (5) of the definition of the family $\mathcal{F}$ can be replaced by the assumption that the tangent cones of any tangent cone are all metric cones.

The following proposition for minimizing cylinders also extend to singular ambient space cases.

\begin{prop}\label{split}
    $\Sigma^n\times\mathbb{R}$ is area-minimizing in $M^{n+1}\times\mathbb{R}$ if and only if $\Sigma^n$ is area-minimizing in $M^{n+1}$.
\end{prop}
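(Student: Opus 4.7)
The plan is to prove both implications by adapting standard product-structure arguments to the singular ambient setting, using the projection $\pi\colon M^{n+1}\times\mathbb{R}\to\mathbb{R}$. The forward direction (\emph{$\Sigma$ minimizing implies $\Sigma\times\mathbb{R}$ minimizing}) is handled by slicing and integrating. The reverse (\emph{$\Sigma\times\mathbb{R}$ minimizing implies $\Sigma$ minimizing}) is handled by an explicit long-cylinder competitor whose mass savings grow linearly in its length while the corrective ``caps'' cost only $O(1)$.

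For the forward direction, write $\Sigma=\partial\llbracket E\rrbracket$, so that $T=\Sigma\times\mathbb{R}=\partial\llbracket E\times\mathbb{R}\rrbracket$. Given any competitor $S=\partial\llbracket F\rrbracket$ for $T$ with $\mathrm{spt}(S-T)\subset A\cap W$ for $W=W'\times[-L,L]$ and $A=M\times\mathbb{R}$, I slice by $\pi$. By the standard current-slicing formula, for a.e.\ $t\in[-L,L]$ one has $\langle S,\pi,t\rangle=\pm\partial\llbracket F\cap\pi^{-1}(t)\rrbracket$, an integral boundary in $M\times\{t\}\cong M$ that coincides with $\Sigma$ outside $W'$ and whose support lies in $M$. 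Hence each slice is an admissible competitor for $\Sigma$, so minimality of $\Sigma$ gives $\mathbb{M}_{W'}(\Sigma)\le\mathbb{M}_{W'}(\langle S,\pi,t\rangle)$ for a.e.\ $t$. Integrating this from $-L$ to $L$, combined with the slicing mass bound $\int_{-L}^L\mathbb{M}_{W'}(\langle S,\pi,t\rangle)\,dt\le\mathbb{M}_W(S)$ (a consequence of $\pi$ being $1$-Lipschitz) and the identity $\mathbb{M}_W(T)=2L\,\mathbb{M}_{W'}(\Sigma)$, yields $\mathbb{M}_W(T)\le\mathbb{M}_W(S)$.

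For the reverse direction, argue by contradiction: suppose $\Sigma'=\partial\llbracket E'\rrbracket$ satisfies $E\triangle E'\subset\subset W'$ and $\varepsilon:=\mathbb{M}_{W'}(\Sigma)-\mathbb{M}_{W'}(\Sigma')>0$. For each $L>0$, set
\[
F_L:=\bigl(E\times(\mathbb{R}\setminus[-L,L])\bigr)\cup\bigl(E'\times[-L,L]\bigr),\qquad S_L:=\partial\llbracket F_L\rrbracket.
\]
Then $\mathrm{spt}(S_L-T)\subset A\cap W_L$ with $W_L:=W'\times[-L,L]$, and inside $W_L$ the current $S_L$ consists of the vertical sides $\Sigma'\times[-L,L]$ plus two horizontal ``caps'' supported in $(E\triangle E')\times\{\pm L\}$. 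The mass comparison
\[
\mathbb{M}_{W_L}(S_L)\le 2L\,\mathbb{M}_{W'}(\Sigma')+2\mathcal{H}^{n+1}(E\triangle E'),\qquad \mathbb{M}_{W_L}(T)=2L\,\mathbb{M}_{W'}(\Sigma)
\]
then gives $\mathbb{M}_{W_L}(T)-\mathbb{M}_{W_L}(S_L)\ge 2L\varepsilon-2\mathcal{H}^{n+1}(E\triangle E')$, which is strictly positive once $L>\mathcal{H}^{n+1}(E\triangle E')/\varepsilon$, contradicting minimality of $T$.

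The main bookkeeping concern is compatibility with the singular set $\mathrm{Sing}(A)=\mathrm{Sing}(M)\times\mathbb{R}$. In the forward direction, slicing takes place at the level of integral currents in $\mathbb{R}^{n+l+1}$, and the support constraint $\mathrm{spt}(\langle S,\pi,t\rangle)\subset M\times\{t\}$ is automatic from $\mathrm{spt}(S)\subseteq A$; no special treatment of singularities is required since the product structure means that the ambient constraint simply descends to each slice. In the reverse direction the horizontal caps of $S_L$ live in $M\times\{\pm L\}\subset A$ by construction, and $F_L$ is a Caccioppoli set in $M\times\mathbb{R}$ because $E$ and $E'$ are Caccioppoli in $M$. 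Thus, in contrast to Lemma \ref{cpt}, no refined singular-set analysis (covering arguments, volume estimates near $\mathrm{Sing}(A)$, etc.) is needed here.
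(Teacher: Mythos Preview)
Your proof is correct and follows essentially the same strategy as the paper: the forward direction via slicing is exactly the paper's coarea-formula argument written in current-theoretic language, and the reverse direction via a long cylinder with linearly growing savings versus $O(1)$ cap cost is the same idea. Your reverse-direction construction is in fact slightly cleaner than the paper's: where the paper invokes an unspecified interpolating hypersurface $I(N,\Sigma)$ over a strip of width $\epsilon$, you exploit the Caccioppoli-set structure to take flat horizontal caps $(E\triangle E')\times\{\pm L\}$ with an explicit mass bound $\mathcal{H}^{n+1}(E\triangle E')$.
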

\begin{proof}
    On the one hand, suppose $\Sigma^n$ is area-minimizing in $M^{n+1}$. Then for any $N$ in $M^{n+1}\times\mathbb{R}$ which has the same boundary as $\Sigma^n\times\mathbb{R}$ and differs from $\Sigma^n\times\mathbb{R}$ only on a compact set $K=K'\times [a,b]$, we have by the coarea formula:
    \begin{align*}
        &~\text{Area}(N\cap K)-\text{Area}((\Sigma\times\mathbb{R})\cap K)\\
        =&\int_a^b\left(\text{Area}(N\cap K')\frac{1}{\lvert\nabla^N h\rvert}-\text{Area}(\Sigma\cap K)\right)dh\\
        \geq& \int_a^b\left(\text{Area}(N\cap K')-\text{Area}(\Sigma\cap K')\right)dh\\
        \geq&~0,
    \end{align*}
    where $h$ is the height function and we have used that $\text{Area}(N\cap K')\geq\text{Area}(\Sigma\cap K')$, since $\Sigma$ is minimizing. And this completes the proof of the \textit{if} part of the proposition.
    
    On the other hand, suppose $\Sigma^n\times\mathbb{R}$ is area-minimizing in $M^{n+1}\times\mathbb{R}$. If $\Sigma$ is not area-minimizing in $M$, then there exists $N$ in $M$ with the same boundary of $\Sigma$ such that, $N$ differs from $\Sigma$ only on a compact set $K$, and $\text{Area}(N\cap K)<\text{Area}(\Sigma\cap K)$. Now it is possible to construct an $\tilde{N}$ such that:
    \begin{equation*}
    \tilde{N}=\left\{
    \begin{aligned}
        &N,\quad &\lvert h\rvert\le R,\\
        &I(N,\Sigma),\quad &R<\lvert h\rvert<R+\epsilon,\\
        &\Sigma,\quad &\lvert h\rvert>R+\epsilon.
    \end{aligned}
    \right
    .
    \end{equation*}
    where $I(N,\Sigma)$ is some hypersurface connecting $\Sigma\times\{R+\epsilon\}$ and $N\times\{R\}$. Fix $\epsilon$ small and let $R$ be large enough, we will find that $\tilde{N}$ has less area than $\Sigma\times\mathbb{R}$. However, $\tilde{N}$ agrees with $\Sigma\times\mathbb{R}$ on the boundary and outside the compact set $K\times[R-\epsilon,R+\epsilon]$. A contradiction. This finishes the proof of the \textit{only if} part.
\end{proof}

Now we are ready to prove our main regularity theorem \ref{main1}. The proof is based on the classical blow-up argument and the basic case discussed in Section 3.

\begin{proof}[Proof of Theorem \ref{main1}]
    Assume (6a) holds in the definition of $M\in\mathcal{F}$. First observe that, by the classical regularity results for area-minimizing hypersurfaces, we have: 
    $$\text{dim}\left(\text{Sing}(\Sigma)\cap (M\setminus \text{Sing}(M))\right)\le n-7\le n-3,$$
    since $M\setminus \text{Sing}(M)$ is a smooth manifold. Therefore, we only need to control $\text{dim}(\text{Sing}(\Sigma)\cap\text{Sing}(M)).$ 

    Suppose there exists $d>0$ such that 
    $$\mathcal{H}^{n-3+d}(\text{Sing}(\Sigma)\cap\text{Sing}(M))>0.$$
    
    By Proposition \ref{hkae}, we can choose $p\in \text{Sing}(\Sigma)\cap \text{Sing}(M)$ and a sequence $\{r_j\}\rightarrow0$ such that:
    $$\lim_{j\rightarrow\infty}\frac{\mathcal{H}_{n-3+d}^{\infty}\left(\text{Sing}(\Sigma)\cap\text{Sing}(M)\cap B_{r_j}(p)\right)}{\omega_{n-3+d}r^{n-3+d}}\geq2^{-(n-3+d)}.$$
    Then blow up $\Sigma$ in $M$ at $p$ as in Corollary \ref{exist} to obtain a minimizing hypersurface $\Sigma_1$ in the tangent cone $C$ of $M$ at $p$ with
    $$\mathcal{H}^{n-3+d}(\text{Sing}(\Sigma_1)\cap\text{Sing}(C))>0.$$
    Again by Proposition \ref{hkae} and Corollary \ref{exist}, we can blow up $\Sigma_1$ at some point $p_1\in\text{Sing}(\Sigma_1)\cap\text{Sing}(C)$ $(p_1\neq p)$ and obtain a minimizing $\Sigma_2$ in the tangent cone $C_1$ of $C$ at $p_1$ with
    $$\mathcal{H}^{n-3+d}(\text{Sing}(\Sigma_2)\cap\text{Sing}(C_1))>0,$$
    and so on. Note that compared to $C$, $C_1$ is not only a cone but also a cylinder (see Lemma \ref{cylinder}), which splits out a line passing through $p_1$. Therefore, after blowing up $\Sigma$ finitely many times, we will finally get a minimizng hypersurface $\Sigma_m$ in a tangent cylinder $C_{m-1}$, where $C_{m-1}=\RR^{m-1}\times C_0$ for some cone $C_0$. Moreover, we have:
    $$\mathcal{H}^{n-3+d}(\text{Sing}(\Sigma_m)\cap\text{Sing}(C_{m-1}))>0,$$
    and each singular point belongs to the spine of $C_{m-1}$. (A point $q$ belongs to the spine of a cylinder $C$, if $\eta_{q,1,\#}C=C$.) 

    Now that the ambient space $C_{m-1}$ is sufficiently regular, we can further blow up $\Sigma_m$ at some point $p_m$ (note that $\Sigma_m$ need not be a cone) to obtain a minimizing cone $\Sigma_{m+1}$ in $C_{m-1}$, by Corollary \ref{cor2}.
    Since $\Sigma_{m+1}$ and $C_{m-1}$ are both cones, $\text{Sing}(\Sigma_{m+1})\cap\text{Sing}(C_{m-1})$ splits out a line $l$ passing through $p_m$, provided $n-3+d>0$.
    
    Next, choose $p_{m+1}\in l$, $p_{m+1}\neq p_m$, and blow up $\Sigma_{m+1}$ in $C_{m-1}$ at $p_{m+1}$. Then by Lemma \ref{cylinder} and Corollary \ref{cor2}, we obtain a minimizing cylinder $\Sigma_{m+2}$ in the tangent cylinder $C_{m-1}$. Here we use that $C_{m-1}$ is a cone and that the monotonicity formula for minimal surfaces holds in this case.
    
    If we write $\Sigma_{m+2}=\Sigma_{m+3}\times\mathbb{R}$ and $C_{m-1}=C_m\times\mathbb{R}$, then by Proposition \ref{split}, we obtain a minimizing cone $\Sigma_{m+3}$ in $C_m$ with
    $$\mathcal{H}^{n-4+d}(\text{Sing}(\Sigma_{m+3})\cap\text{Sing}(C_m))>0.$$
    Recall our definition of the family $\mathcal{F}$, and we see that $C_m$ inherits all the properties from $C_{m-1}$. Therefore, arguing inductively as above, we will finally obtain a singular minimizing $2$-dimensional $\Sigma_0$ in a $3$-dimensional oriented scalar nonnegative cone $C_0$ with an isolated singular vertex $p$, a contradiction to Corollary \ref{cor}. Consequently,
    $\text{dim}(\text{Sing}(\Sigma^n))\le n-3.$\par
    On the other hand, if (6b) holds in the definition of $M\in\mathcal{F}$ and suppose $\text{dim}(\text{Sing}(\Sigma^n))>n-2$, then a similar dimension reduction argument will lead to an existence of a minimizing geodesic on a 2-dimensional cone $C_0$ passing through the vertex $p$ of $C_0$, where $\Theta_{C_0}(p)\le 1-\epsilon<1$. Since any $2$-dimensional cone is developable, it is not hard to see that this is impossible. Hence we have $\text{dim}(\text{Sing}(\Sigma^n))\le n-2$ in this case.    
\end{proof}

\section{An Example For Higher Dimensions}
In this section we compute a concrete example (Example \ref{main2}) to show that the dimension bound in Theorem \ref{main1} cannot be improved. This example shows the existence of 3-dimensional area-minimizing hypercones in 4-d scalar positive ambient spaces with singularities. \par

As is pointed out in the introduction, this example has already been discussed in \cite{Morgan2002}. Here, we provide an alternative approach which is similar to the discussion in Section 3.

Let $C(S^n(\lambda))$ be the standard cone with vertex $O$ and cross section $S^n(\lambda)$, where $0<\lambda\le 1$ is the radius of the sphere $S^n$. And let $S^{n-1}(\lambda)$ be the equator of $S^n(\lambda)$. Consider the following Plateau problem:
\begin{p}\label{q}
    What does the surface of least area look like among all the surfaces in the ambient space $C(S^n(\lambda))$ with boundary $S^{n-1}(\lambda)$? 
\end{p}
We solve part of this problem first. Denote the solution surface to this problem by $\Sigma^n$. And we will focus on the case $2\le n\le6$ and $0<\lambda<1$.

We start by discussing whether $\Sigma$ intersects the singular point $O$.

\begin{prop}\label{prop1}
    If $\Sigma$ is not smooth, then $O\in \Sigma$ and the totally geodesic hypercone $C(S^{n-1}(\lambda))$ is a solution to Problem \ref{q}.
\end{prop}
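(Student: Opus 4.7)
For the first claim $O\in\Sigma$: on the smooth Riemannian manifold $C(S^n(\lambda))\setminus\{O\}$ the classical interior regularity theory gives $\dim_{\mathcal H}\mathrm{Sing}(\Sigma)\le n-7$, which is negative for $2\le n\le 6$. Hence $\Sigma$ is smooth on the smooth part of the ambient, so any singularity of the non-smooth $\Sigma$ must sit at $O$.

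For the second claim, since the cone $C(S^{n-1}(\lambda))$ is a valid competitor with the same boundary $S^{n-1}(\lambda)$, the minimality of $\Sigma$ already gives $\mathrm{Vol}(\Sigma)\le\mathrm{Vol}(C(S^{n-1}(\lambda)))=\omega_n\lambda^{n-1}$; the content of the statement is the reverse inequality. I would combine a maximum principle, the cone monotonicity formula of Lemma \ref{mono}, and an identification of the tangent cone. The identity $\mathrm{div}_\Sigma X=n$ recorded in the proof of Lemma \ref{mono} gives $\Delta_\Sigma r^2=2n$, so $r^2$ is strictly subharmonic on $\Sigma$ and attains its maximum on $\partial\Sigma\subset\{r=1\}$, forcing $\Sigma\subset\overline{B_1(O)}$. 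Then Lemma \ref{mono} applied at $O$ for $t\in(0,1)$ (admissible since $\partial\Sigma$ lies at distance $1$ from $O$) makes $t^{-n}\mathrm{Vol}(\Sigma\cap B_t)$ non-decreasing, with limits $\omega_n\Theta_\Sigma(O)$ as $t\to 0^+$ and $\mathrm{Vol}(\Sigma)$ as $t\to 1^-$, yielding $\mathrm{Vol}(\Sigma)\ge\omega_n\Theta_\Sigma(O)$.

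It remains to show $\Theta_\Sigma(O)\ge\lambda^{n-1}$. By Corollary \ref{cor2}, a tangent cone of $\Sigma$ at $O$ is a minimizing cone $C(N_0)$ in $C(S^n(\lambda))$ for some closed minimal hypersurface $N_0\subset S^n(\lambda)$, with $\Theta_\Sigma(O)=\mathrm{Vol}(N_0)/(n\omega_n)$. Using the Alexandrov--Schoen moving plane method across each hyperplane containing the rotation axis of the ambient (every such hyperplane preserves both $C(S^n(\lambda))$ and $S^{n-1}(\lambda)$), the minimizer $\Sigma$ may be taken rotationally symmetric, so $N_0$ is rotationally symmetric as well; the only rotationally symmetric closed minimal hypersurface in $S^n(\lambda)$ is the equator $S^{n-1}(\lambda)$ itself, giving $\mathrm{Vol}(N_0)=n\omega_n\lambda^{n-1}$. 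Combining with the previous paragraph yields $\mathrm{Vol}(\Sigma)\ge\omega_n\lambda^{n-1}=\mathrm{Vol}(C(S^{n-1}(\lambda)))$, which forces equality with the reverse bound above and exhibits $C(S^{n-1}(\lambda))$ as a solution.

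The main obstacle is this final step: justifying the moving plane argument for a minimizer possibly singular at the vertex of the singular ambient cone, together with the classification of rotationally symmetric closed minimal hypersurfaces of $S^n(\lambda)$. A clean alternative avoiding explicit symmetrization would be to invoke Almgren's spherical inequality, which bounds the mass of any stationary integral $(n-1)$-varifold in $S^n(\lambda)$ below by $n\omega_n\lambda^{n-1}$ (the area of the equator); this yields $\Theta_\Sigma(O)\ge\lambda^{n-1}$ directly, without needing to determine the tangent cone explicitly.
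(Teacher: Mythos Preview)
Your treatment of the first claim ($O\in\Sigma$) matches the paper exactly. For the second claim your route is genuinely different from the paper's, and---provided you commit to the Almgren alternative rather than the moving-plane step---it is correct.

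The paper argues as follows: blow up $\Sigma$ at $O$ (Corollary~\ref{cor2}) to obtain an area-minimizing cone $\Sigma_1$ in $C(S^n(\lambda))$; then prove a claim (established in the Appendix via a Simons-type comparison with the Euclidean situation) that \emph{every} minimal cone in $C(S^n(\lambda))$ other than the totally geodesic $C(S^{n-1}(\lambda))$ is unstable when $2\le n\le 6$. This forces $\Sigma_1=C(S^{n-1}(\lambda))$, and since blow-up limits of minimizers are minimizing, $C(S^{n-1}(\lambda))$ solves Problem~\ref{q}. So the paper's key lemma is an instability/rigidity statement for minimal cones, not a density comparison.

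Your argument instead combines the containment $\Sigma\subset\overline{B_1}$ (which the paper proves later, as a separate proposition, in the smooth case) with monotonicity at the vertex to get $\mathrm{Vol}(\Sigma)\ge\omega_n\Theta_\Sigma(O)$, and then lower-bounds $\Theta_\Sigma(O)$ by the equatorial value. Your moving-plane suggestion for this last step is indeed problematic: in the paper the moving-plane argument (Lemma~\ref{rot}) is run only \emph{after} one knows $\Sigma$ is smooth, precisely because the strong maximum principle needs smoothness at the contact point, and all reflection hyperplanes pass through the singular vertex $O$. Your Almgren alternative, however, cleanly bypasses this: the link of the tangent cone is a stationary integral $(n-1)$-varifold in $S^n(\lambda)$, so Almgren's spherical isoperimetric inequality gives $\Theta_\Sigma(O)\ge\lambda^{n-1}$ without any identification of $N_0$. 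One small point to tidy: the subharmonicity $\Delta_\Sigma r^2=2n$ is only available on the regular part of $\Sigma$, but since $r^2(O)=0$ is the global minimum, the isolated singularity at $O$ does not obstruct the maximum principle.

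What each approach buys: the paper's instability argument is self-contained (no appeal to Almgren) and simultaneously pins down the tangent cone, at the cost of relying on the dimension bound $n\le 6$ a second time through Simons' inequality. Your Almgren route avoids classifying the tangent cone and in principle does not need $n\le 6$ for the density lower bound, though you still use the dimension restriction to localize the singularity at $O$ in the first place.
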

\begin{proof}
    When $2\le n\le6$, classical regularity theory shows that $\Sigma \cap C(S^n(\lambda)) \setminus \{O\}$ is smooth. Therefore, if $\Sigma$ is not smooth, $\Sigma$ must contain $O$.\par
    To prove the second statement, we blow up $\Sigma$ in $C(S^n(\lambda))$ and obtain an area-minimizing cone $\Sigma_1$ in $C(S^n(\lambda))$ by Corollary \ref{cor2}. Now we claim that any minimal cone different from $C(S^{n-1}(\lambda))$ in $C(S^n(\lambda))$ is unstable. And this will force $\Sigma_1$ to be identical to $C(S^{n-1}(\lambda))$. Thus, $C(S^{n-1}(\lambda))$ is area-minimizing and can be chosen to solve the initial Plateau problem.\\
    Proof of the claim: The proof is the same as the classical proof for minimal cones using Simons' inequality. See Appendix for details or refer to \cite{Simons1968},\cite{Simon1983},\cite{Yau1975}.
\end{proof}

Therefore, we only need to consider $\Sigma\subseteq C(S^n(\lambda))\setminus \{O\}$ being smooth.

\begin{lemma}\label{rot}
    If $\Sigma$ is smooth, then $\Sigma$ is rotationally symmetric in the sense that $\Sigma$ is invariant under any rotation of $C(S^n)$ which fixes the boundary $S^{n-1}(\lambda)$. 
\end{lemma}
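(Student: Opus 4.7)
The plan is to apply the Alexandrov moving plane method (see \cite{Alexandrov1962}, \cite{Schoen1983}), exploiting the rotational symmetry of the ambient cone $C(S^n(\lambda))$ and of its boundary equator $S^{n-1}(\lambda)$. Since $\Sigma$ is assumed smooth and the vertex $O$ is the only singular point of the ambient space, classical regularity theory forces $O \notin \Sigma$, so $\Sigma$ lies entirely in the smooth Riemannian manifold $C(S^n(\lambda)) \setminus \{O\}$, where the Hopf strong maximum principle for minimal hypersurfaces applies in the usual way.

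First I would identify the axis of symmetry $\ell$ as the line through $O$ orthogonal to the equatorial hyperplane of $S^n(\lambda)$, and observe that the subgroup of isometries of $C(S^n(\lambda))$ preserving $S^{n-1}(\lambda)$ as a set is generated by reflections across totally geodesic hyperplanes of the cone containing $\ell$. It therefore suffices to show that $R_H\Sigma = \Sigma$ for every such hyperplane $H$. For a fixed target $H$, I would then deploy a one-parameter family $\{H_t\}$ with $H_0 = H$, sweeping transversally through the cone so that $H_t$ is disjoint from $\Sigma$ for $|t|$ large, and let $\Sigma^+(t)$ denote the portion of $\Sigma$ lying on the positive side of $H_t$.

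Starting from $t \gg 0$ and decreasing $t$, the reflected piece $R_t(\Sigma^+(t))$ grows on the opposite side of $H_t$; since $R_t$ is a local isometry away from $O$, the reflected piece remains minimal. At the first critical value $t^*$ where $R_t(\Sigma^+(t))$ fails to lie strictly on one side of $\Sigma \setminus \Sigma^+(t)$, the strong maximum principle---applied either at an interior tangency or at a boundary tangency (the latter using transversality of $\partial \Sigma = S^{n-1}(\lambda)$ to the moving hyperplane)---forces the reflected and unreflected pieces to coincide on a connected component, giving $R_{t^*}\Sigma = \Sigma$. Because the equator $S^{n-1}(\lambda)$ is invariant only under reflections across hyperplanes through its center $\ell$, one must have $t^* = 0$, hence $R_H\Sigma = \Sigma$. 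By the arbitrariness of $H$, $\Sigma$ is invariant under the full rotation subgroup fixing the equator, which proves the lemma.

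The main obstacle is adapting the Alexandrov method to the conical ambient space: the classical argument uses parallel translates of hyperplanes, which are isometries of Euclidean space but not of the singular cone $C(S^n(\lambda))$ when $\lambda < 1$. To handle this, one works in the smooth Riemannian manifold $C(S^n(\lambda)) \setminus \{O\}$, choosing the moving family $\{H_t\}$ either via a local Euclidean model or via a suitable rotating family of totally geodesic hypersurfaces intrinsic to the cone, so that each $R_t$ is a local isometry of the smooth region through which $\Sigma$ passes. The key ingredient---the strong maximum principle for minimal hypersurfaces---is unaffected by the conical structure away from the vertex, so the argument goes through as in the Euclidean case.
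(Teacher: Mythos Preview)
Your overall strategy matches the paper's: apply the moving plane method with reflections that are isometries of the cone and invoke the strong maximum principle. However, the initialization step you describe contains a genuine gap that the paper resolves by a trick you have not identified.

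The issue is this: any reflection that is an isometry of the cone metric $g_\lambda = dr^2 + \lambda^2 r^2 g_{S^n}$ must fix the vertex $O$, so the reflecting hyperplane must pass through $O$. But every hyperplane through $O$ meets the boundary $\partial\Sigma = S^{n-1}(\lambda)$, hence meets $\Sigma$. Thus your condition ``$H_t$ is disjoint from $\Sigma$ for $|t|$ large'' is incompatible with the requirement that each $R_t$ be an isometry of the cone; there is no way to start the process from a position where the reflected cap is empty. Your fallback of using only \emph{local} isometries does not help either: a reflection that is not a global isometry of the ambient metric need not carry minimal hypersurfaces to minimal hypersurfaces, and the one-sided comparison required by the strong maximum principle then breaks down.

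The paper's resolution is to work in the model $(\mathbb{R}^{n+1}, g_\lambda)$ with the \emph{rotating} family
\[
P_\gamma(t)=\{(x,y)\in\mathbb{R}^n\times\mathbb{R}: (x,y)\cdot(\gamma\sin t,\cos t)=0\},\qquad 0\le t\le \pi,
\]
of hyperplanes through $O$; reflection across each is a global isometry of $g_\lambda$. One starts at $t=0$, the equatorial hyperplane $\{y=0\}$ containing $\partial\Sigma$. If $\Sigma$ has a nonempty piece $\Sigma_0^-$ below this plane, one \emph{replaces} $\Sigma$ by $\overline{\Sigma_0^+\cup\Sigma_0^{-*}}$; this surface has the same area (reflection is an isometry), hence is also a minimizer of the Plateau problem, hence is smooth by regularity, so without loss of generality $\Sigma_0^-=\emptyset$. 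From this configuration one rotates toward $t=\pi/2$ and applies the strong maximum principle at the first touching time, forcing $t_0=\pi/2$ and $\Sigma_{\pi/2}^{-*}=\Sigma_{\pi/2}^+$. The essential point you are missing is that the \emph{area-minimizing} property of $\Sigma$---not merely its smoothness---is what furnishes the starting position for the moving planes. The paper also first establishes $\Sigma\subset B_1$ via subharmonicity of $r^2$ on $\Sigma$, confining the surface to a compact region before running the argument.
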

We will use an analogue of the moving plane method.

Let $\mathbb{R}^{n+1}$ be given the metric:
$$g_{\lambda}=dr^2+\lambda^2r^2g_{S^n},$$
where $(r,x)$ is the polar coordinate system. Thus, $(\mathbb{R}^{n+1},g_{\lambda})$ is isometric to the cone $C(S^n(\lambda))$. And then the given boundary $S^{n-1}(\lambda)$ in Problem \ref{q} will be $\partial B_1\cap \mathbb{R}^n \times \{0\}$. And we will use this model from now on. 

Recall that $\Sigma$ is the solution to Problem \ref{q} and $\partial\Sigma=\partial B_1\cap \mathbb{R}^n \times \{0\}$.

\begin{prop}
    $\Sigma$ lies inside the ball $B_1\subseteq\mathbb{R}^{n+1}$.
\end{prop}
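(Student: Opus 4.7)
The plan is a convex-hull-type argument: construct a $1$-Lipschitz retraction $\pi$ of $(\mathbb{R}^{n+1}, g_\lambda)$ onto the closed unit ball $\overline{B_1}$ which fixes the prescribed boundary and strictly contracts area on $\{r > 1\}$, so that $\pi_\#\Sigma$ will be an admissible competitor of strictly smaller mass unless $\Sigma \subseteq \overline{B_1}$.

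First I would define the ``radial cutoff'' $\pi(r,\theta) = (\min(r,1), \theta)$ in polar coordinates. This map is Lipschitz, is the identity on $\overline{B_1}$ (in particular on $\partial\Sigma = \partial B_1 \cap \mathbb{R}^n \times \{0\}$), and sends $\{r > 1\}$ onto $\{r=1\}$, so its image lies in $\overline{B_1}$.

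Next I would verify a Jacobian estimate using the cone metric $g_\lambda = dr^2 + \lambda^2 r^2 g_{S^n}$. On $\{r > 1\}$ one has $d\pi(\partial_r) = 0$, and a $g_\lambda$-unit angular vector at $(r,\theta)$, necessarily of the form $\tfrac{1}{\lambda r}\xi$ for some $g_{S^n}$-unit $\xi$, is sent to $\tfrac{1}{\lambda r}\xi$ at $(1,\theta)$, whose $g_\lambda$-length is $\tfrac{1}{r}$. Consequently, at any point with $r > 1$, for an $n$-plane $P$ with $g_\lambda$-unit normal $\nu$, a Cauchy--Binet calculation gives
\begin{equation*}
J(d\pi|_P) \;=\; \frac{|\langle \nu, \partial_r\rangle_{g_\lambda}|}{r^n} \;\le\; \frac{1}{r^n} \;<\; 1,
\end{equation*}
so $\pi$ is globally $1$-Lipschitz and strictly area-decreasing on $\{r > 1\}$.

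Finally, since $\pi|_{\partial B_1}$ is the identity and $\partial\Sigma \subseteq \partial B_1$, one has $\partial(\pi_\#\Sigma) = \partial\Sigma$, so $\pi_\#\Sigma$ is admissible in the Plateau problem. If $\Sigma$ left $\overline{B_1}$, then by classical interior regularity of minimizing hypersurfaces in the smooth part of the ambient space ($O$ being the only singularity and $O \in B_1$), $\Sigma \cap \{r > 1\}$ would be a nonempty smooth open piece of positive $\mathcal{H}^n$-measure; the area formula combined with the Jacobian estimate above would then yield $\mathbb{M}(\pi_\#\Sigma) < \mathbb{M}(\Sigma)$, contradicting the minimality of $\Sigma$. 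The one technical point is the Jacobian computation in the conical metric, but the $r$-dependence of the angular part of $g_\lambda$ makes the radial cutoff $1$-Lipschitz almost by inspection; everything else is a standard maximum-principle/competitor argument.
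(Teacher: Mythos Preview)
Your proof is correct but takes a genuinely different route from the paper. The paper argues via the maximum principle: it shows that $r^2 = |x|^2$ is subharmonic on $\Sigma$ by computing $\Delta_\Sigma r^2 = 2\,\mathrm{div}_\Sigma X = 2n$, where $X$ is the position vector field and the identity $\mathrm{div}_\Sigma X = n$ for minimal hypersurfaces in cones is the one already verified in Lemma~\ref{mono}. The maximum of $r^2$ is then attained on $\partial\Sigma \subset \{r = 1\}$.

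Your retraction argument is instead a direct mass comparison at the level of currents: the radial cutoff strictly contracts $n$-area on $\{r > 1\}$, so any mass there would produce a cheaper competitor. This is the convex-hull property adapted to the cone metric. The paper's approach is shorter and recycles an identity already in hand; yours has the advantage of not requiring smoothness or compactness of $\Sigma$ in advance (in fact your appeal to interior regularity is unnecessary---if $\mathrm{spt}(\Sigma)$ meets $\{r>1\}$ then $\mu_\Sigma(\{r>1\})>0$ by the definition of support, and the pushforward mass estimate $\mathbb{M}(\pi_\#\Sigma)\le\int J_\Sigma\pi\,d\mu_\Sigma$ already gives the strict inequality). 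Both proofs ultimately exploit the same feature of the cone metric, namely that the angular part scales like $r^2$, but package it differently: as subharmonicity of $r^2$ along $\Sigma$ versus as area contraction under radial projection.
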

\begin{proof}
    We show that the square of the distance function $r^2(x)=\lvert x\rvert^2$ on $\Sigma$ is subharmonic. And the proposition follows as a consequence. Indeed,
    $$\Delta_{\Sigma}(r^2)=\text{div}_{\Sigma}(2r\nabla_{\Sigma}r)=2\text{div}_{\Sigma}(r\nabla_{C(S^n(\lambda))} r)=2\text{div}_{\Sigma}(X)=2n,$$
    where $X$ is the position vector field. Here in the second equality we use $\text{div}_{\Sigma}(N)=0$, for all normal vector $N$, since $\Sigma$ is minimal. And the reason for the last equality is the same as that in the monotonicity formula. See Lemma \ref{mono}. (To check these equalities, it is actually better to embed $C(S^n(\lambda))$ into the Euclidean space $\mathbb{R}^{n+2}$.)
\end{proof}

\begin{proof}[Proof of Lemma \ref{rot}]
    Fix any unit vector $\gamma\in\mathbb{R}^n$, and consider a family of rotating hyperplanes $$\mathcal{P_{\gamma}}=\{P_{\gamma}(t)\mid 0\le t\le\pi\},$$ 
    where 
    $$P_{\gamma}(t)=\{(x,y)\in\mathbb{R}^n\times\mathbb{R}=\mathbb{R}^{n+1}\mid (x,y)\bot (\gamma\sin t,\cos t)\}.$$
    For each hyperplane $P_{\gamma}(t)$, let
    $$\Sigma_t^+=\{(x,y)\in\Sigma\mid (x,y)\cdot (\gamma\sin t,\cos t)>0\},$$
    $$\Sigma_t^-=\{(x,y)\in\Sigma\mid (x,y)\cdot (\gamma\sin t,\cos t)>0\}.$$
    Then reflect the surface $\Sigma_t^-$ across the plane $P_{\gamma}(t)$ to get the reflection $\Sigma_t^{-*}$. 
    
    We start at $t=0$. If $\Sigma_0^-\neq\emptyset$, we define a new surface $\Sigma '=\overline{\Sigma_0^{+}\cup \Sigma_0^{-*}}$. Then $\Sigma'$ will also be a solution to Problem \ref{q}, since Area($\Sigma'$)=Area($\Sigma$) and $\Sigma$ is area-minimizing. Moreover, since $\Sigma$ is away from $O$, $\Sigma'$ must be smooth according to the regularity of area-minimizing hypersurfaces. Consequently, we can replace $\Sigma$ by $\Sigma'$ if necessary and assume $\Sigma_0^-=\emptyset$ without loss of generality. Similarly, $\Sigma_{\pi}^-=\Sigma$. 
    
    Now as $t$ increases from $0$ to $\frac{\pi}{2}$, we will find a $t_0$ which is the first time that $\Sigma_t^{-*}$ touches $\Sigma_t^+$. At this time, since the ambient metric is invariant under the reflection, $\Sigma_{t_0}^{-*}$ and $\Sigma_{t_0}^+$ satisfy the same minimal surface equation. In addition, $\Sigma_{t_0}^{-*}$ lies on one side of $\Sigma_{t_0}^+$ and touches $\Sigma_{t_0}^+$ at some point. Therefore, by the strong maximum principle, we conclude that the touching point cannot be an interior point unless $\Sigma_{t_0}^{-*}$ is identical to $\Sigma_{t_0}^+$. Hence, the only possiblility is that $t_0=\frac{\pi}{2}$ and $\Sigma_{\pi/2}^{-*}$ is identical to $\Sigma_{\pi/2}^+$.

    Till now we have showed that $\Sigma$ is invariant under the reflection across the plane $P_{\gamma}(\frac{\pi}{2})$. Since $\gamma$ is chosen arbitrarily, we conclude that $\Sigma$ is rotationally symmetric.
\end{proof}

According to this lemma, we only need to find the surface of least area among rotationally symmetric ones. And this reduce the initial problem to a concrete ODE which we now establish.

We come back to our model $(\mathbb{R}^{n+1},g_{\lambda})$. Since the area-minimizing solution $\Sigma$ is rotationally symmetric, it can be represented by an one-variable function $r=f(\theta)$ using the polar coordinate system. Here $0\le\theta\le\frac{\pi}{2}$ is the polar angle, and $f$ is the distance from $O$.

Then the area functional for $\Sigma$ is:
$$\text{Area}(\Sigma)=n\omega_n\lambda^{n-1}\int_0^{\frac{\pi}{2}}\sqrt{(f')^2+\lambda^2f^2}f^{n-1}\cos^{n-1}\theta d\theta.$$
And the first variation gives a second order ODE for $f=f(\theta)$:
\begin{equation}\label{eq1}
    \left(\frac{f'\cos^{n-1}{\theta}}{\sqrt{(f')^2+\lambda^2f^2}}\right)'-\frac{n\lambda^2f\cos^{n-1}{\theta}}{\sqrt{(f')^2+\lambda^2f^2}}=0.
\end{equation}
The boundary condition for $\Sigma$ becomes the initial condition for $f$: 
\begin{equation}\label{eq2}
    f(0)=1,\quad f'(0)=A.
\end{equation}

Recall that by Proposition \ref{prop1}, if $\Sigma$ is not smooth, then $\Sigma=C(S^{n-1}(\lambda))$, and
$$\text{Area}(C(S^{n-1}(\lambda)))=\omega_n\lambda^{n-1}.$$
Therefore, whether the solution $\Sigma$ to Problem \ref{q} is smooth or not depends on whether there exists a solution  to (\ref{eq1}) and (\ref{eq2}) such that:
\begin{equation}\label{eq3}
    S_{n,\lambda}(f) = \int_0^{\frac{\pi}{2}}\sqrt{(f')^2+\lambda^2f^2}f^{n-1}\cos^{n-1}\theta d\theta<\frac{1}{n}.
\end{equation}

\begin{lemma}\label{lem2}
    If the solution $f$ to the equation (\ref{eq1}) with some initial data $A$ in (\ref{eq2}) can be extended to $\frac{\pi}{2}$, and is a minimizer for the functional $S_{n,\lambda}$ in (\ref{eq3}), then the inequality (\ref{eq3}) holds. And therefore, $C(S^{n-1}(\lambda))$ is not area-minimizing in $C(S^n(\lambda))$.
\end{lemma}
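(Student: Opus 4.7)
The strict inequality $S_{n,\lambda}(f)<\frac{1}{n}$ has a direct geometric meaning: a straightforward polar-coordinate calculation in the metric $g_\lambda = dr^2+\lambda^2 r^2 g_{S^n}$ shows that the totally geodesic hypercone $C(S^{n-1}(\lambda))$ has induced area
$$\int_0^1\int_{S^{n-1}} r^{n-1}\lambda^{n-1}\,d\omega\,dr=\omega_n\lambda^{n-1},$$
so $S_{n,\lambda}(f)<\frac{1}{n}$ is exactly the statement that the smooth graph $\Sigma_f$, whose area equals $n\omega_n\lambda^{n-1}S_{n,\lambda}(f)$, has strictly less area than the cone. Since $\Sigma_f$ and the cone share the boundary $S^{n-1}(\lambda)$, this immediately gives the concluding statement that $C(S^{n-1}(\lambda))$ fails to be area-minimizing, so the whole lemma reduces to establishing the inequality.

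To prove the inequality, I would exploit the minimizing hypothesis on $f$ to reduce it to exhibiting a single admissible competitor. Any smooth $g\colon[0,\pi/2]\to(0,\infty)$ with $g(0)=1$, $g'(\pi/2)=0$, and $g(\pi/2)>0$ defines a smooth rotationally symmetric surface with the same boundary as the cone, and hence $S_{n,\lambda}(f)\le S_{n,\lambda}(g)$ by the minimizing assumption. It therefore suffices to produce one such $g$ satisfying $S_{n,\lambda}(g)<\frac{1}{n}$.

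For the construction of $g$ I would mimic the catenoid-plus-disk strategy used for $n=2$ in the proof of Theorem \ref{thm3.2}: take a surface that closely follows the cone away from a small ball $B_\epsilon(O)$ and caps off the removed tip by a higher-dimensional catenoidal bridge glued to a small spherical disk. Expanding the area in $\epsilon$ gives a contribution of order $\epsilon^n$ from the bridge-plus-disk against an opposite contribution $\omega_n\lambda^{n-1}\epsilon^n$ from the excised cone tip, and a careful comparison analogous to Lemmas \ref{lem3.3}--\ref{lem3.4} shows that the former is smaller in the relevant parameter regime, yielding $S_{n,\lambda}(g)<\frac{1}{n}$.

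The main obstacle is this quantitative comparison. In $n\ge 3$ the sign of the $\epsilon^n$ coefficient is not automatic: by Example \ref{main2} it is favourable exactly when $\lambda<\frac{2\sqrt{n-1}}{n}$, so one must verify that the hypothesis (existence of a smooth $f$ minimizing $S_{n,\lambda}$ and extending all the way to $\pi/2$) already forces $\lambda$ to lie in this regime. A cleaner alternative that avoids the explicit competitor is to multiply the Euler--Lagrange equation (\ref{eq1}) by $f\cos^{n-1}\theta$ and integrate by parts on $[0,\pi/2]$: the boundary terms at $\theta=0$ involve the initial slope $A$ through $A/\sqrt{A^2+\lambda^2}$, while those at $\theta=\pi/2$ vanish since $\cos^{n-1}\theta\to 0$, producing a closed-form identity for $S_{n,\lambda}(f)$ in terms of $(A,\lambda,n)$ from which the bound $S_{n,\lambda}(f)<\frac{1}{n}$ can be read off directly.
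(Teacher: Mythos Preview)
Your ``cleaner alternative'' at the end is exactly the paper's proof, and it works without any appeal to a competitor construction. The only correction is the multiplier: you should multiply \eqref{eq1} by $f^{n}$, not by $f\cos^{n-1}\theta$. Integrating by parts then gives
\[
\left[\frac{f'f^{n}\cos^{n-1}\theta}{\sqrt{(f')^{2}+\lambda^{2}f^{2}}}\right]_{0}^{\pi/2}
=\int_{0}^{\pi/2}\frac{n(f')^{2}f^{n-1}\cos^{n-1}\theta+n\lambda^{2}f^{n+1}\cos^{n-1}\theta}{\sqrt{(f')^{2}+\lambda^{2}f^{2}}}\,d\theta
=n\,S_{n,\lambda}(f),
\]
and since the boundary term at $\theta=\pi/2$ vanishes (because $\cos^{n-1}\theta\to 0$) while at $\theta=0$ one reads off $A/\sqrt{A^{2}+\lambda^{2}}$, this yields the closed-form identity
\[
S_{n,\lambda}(f)=\frac{-A}{n\sqrt{A^{2}+\lambda^{2}}}<\frac{1}{n}.
\]
The paper precedes this computation with the observation that a minimizer $f$ must be nonincreasing, so $A=f'(0)\le 0$; this makes the right-hand side nonnegative, and strict inequality follows from $\lambda>0$.

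Your competitor approach, by contrast, is a genuine detour: as you yourself note, the catenoid-plus-disk comparison only succeeds when $\lambda<\tfrac{2\sqrt{n-1}}{n}$, and verifying that the hypothesis of the lemma forces this range is precisely the content of Lemmas \ref{lem2}--\ref{lem3} taken together. So that route is circular unless one has already done the ODE analysis, and the paper dispenses with it entirely.
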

\begin{proof}
    First observe that, if $f$ is a minimizer, then $f$ must be a decreasing function. Thus, we only need to consider the initial data $f'(0)=A\le 0$.

    Suppose the solution $f$ to (\ref{eq1}) and (\ref{eq2}) is defined on the whole interval $[0,\frac{\pi}{2}]$. Then
    \begin{align*}
        S_{n,\lambda}(f)&=\int_0^{\frac{\pi}{2}}\sqrt{(f')^2+\lambda^2f^2}f^{n-1}\cos^{n-1}\theta d\theta\\
        &=\int_0^{\frac{\pi}{2}}\left((f')^2+\lambda^2f^2\right)f^{n-2}\left(\frac{f\cos^{n-1}\theta}{\sqrt{(f')^2+\lambda^2f^2}} \right) d\theta\\
        &=\int_0^{\frac{\pi}{2}}\frac{(f')^2f^{n-1}\cos^{n-1}\theta}{\sqrt{(f')^2+\lambda^2f^2}}+\lambda^2f^n\left(\frac{f\cos^{n-1}\theta}{\sqrt{(f')^2+\lambda^2f^2}} \right) d\theta\\
        &=\frac{1}{n}\int_0^{\frac{\pi}{2}}\frac{(nf'f^{n-1})f'\cos^{n-1}\theta}{\sqrt{(f')^2+\lambda^2f^2}}+f^n\left(\frac{f'\cos^{n-1}{\theta}}{\sqrt{(f')^2+\lambda^2f^2}}\right)'d\theta\\
        &=\frac{1}{n}\left(\frac{f'f^n\cos^{n-1}{\theta}}{\sqrt{(f')^2+\lambda^2f^2}}\right)\Bigg|_{0}^{\frac{\pi}{2}}\\
        &=\frac{-A}{n\sqrt{A^2+\lambda^2}}<\frac{1}{n},
    \end{align*}
    where in the forth equality we use the equation (\ref{eq1}).
\end{proof}

\begin{lemma}\label{lem3}
    If the solution $f$ to the equation (\ref{eq1}) with any initial data $A$ chosen in (\ref{eq2}) cannot be extended to $\frac{\pi}{2}$, then $S_{n,\lambda}\geq\frac{1}{n}$ holds for all $f$ with $f(0)=1$. And consequently, $C(S^{n-1}(\lambda))$ is area-minimizing in $C(S^n(\lambda))$.
\end{lemma}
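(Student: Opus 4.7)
The plan is to argue by contradiction. Suppose there exists some function $g$ with $g(0) = 1$ satisfying $S_{n,\lambda}(g) < 1/n$. Then the rotationally symmetric surface it describes in the model $(\mathbb{R}^{n+1}, g_\lambda)$ has boundary $S^{n-1}(\lambda)$ and total area $n\omega_n\lambda^{n-1}S_{n,\lambda}(g) < \omega_n\lambda^{n-1}$, strictly less than the cone area. I would then invoke the standard existence theory for area-minimizing integral currents with prescribed boundary inside the ambient cone $C(S^n(\lambda))$ to produce an area-minimizer $\Sigma$ with $\partial\Sigma = S^{n-1}(\lambda)$ whose mass is strictly smaller than $\omega_n\lambda^{n-1}$.

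Next, apply the dichotomy of Proposition \ref{prop1}: in the range $2 \leq n \leq 6$, either $\Sigma$ is smooth, or $O \in \Sigma$ and the cone $C(S^{n-1}(\lambda))$ itself is a solution of the Plateau problem. The latter alternative forces the minimum mass to equal the cone mass $\omega_n\lambda^{n-1}$, contradicting the strict inequality, so $\Sigma$ must be smooth and disjoint from $O$. By Lemma \ref{rot}, $\Sigma$ is then rotationally symmetric, so $\Sigma=\{r = f(\theta)\}$ with $f$ smooth on $[0,\pi/2]$ and $f(0) = 1$; since $O\notin\Sigma$ we have $f > 0$ throughout, and smoothness of $\Sigma$ across the rotation axis $\{\theta = \pi/2\}$ forces $f'(\pi/2) = 0$. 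Being a critical point of the area functional, $f$ satisfies the Euler-Lagrange equation (\ref{eq1}) on all of $[0,\pi/2]$ with initial datum $f(0) = 1$, directly contradicting the standing hypothesis that no solution of (\ref{eq1}) with $f(0) = 1$ extends to $\pi/2$. This closes the contradiction and yields $S_{n,\lambda}(f) \geq 1/n$ for every admissible $f$.

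For the final clause that $C(S^{n-1}(\lambda))$ is area-minimizing in $C(S^n(\lambda))$, I would combine the previous step, which says the cone attains the infimum in the Plateau problem with boundary $S^{n-1}(\lambda)$ inside $B_1$, with the dilation invariance of the ambient cone: every compactly supported competitor with the same boundary as $C(S^{n-1}(\lambda))\cap B_R$ rescales to a competitor in $B_1$, so the minimum already established applies. The most delicate step is ensuring the smoothness of the area-minimizer $\Sigma$ away from $O$ and its consequent representability as a single-valued rotationally symmetric graph. This rests on the classical interior and boundary regularity theorems (available exactly for $2 \leq n \leq 6$) together with the fact that the only ambient singularity of $C(S^n(\lambda))$ is $O$, so once $O\notin\Sigma$ is established the ambient space is smooth near $\Sigma$ and the classical theory applies; the remainder of the argument is variational bookkeeping once the regularity and symmetry of $\Sigma$ have been secured.
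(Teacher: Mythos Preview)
Your argument follows essentially the same route as the paper: assume the cone is beaten, invoke existence of a minimizer, use Proposition~\ref{prop1} to force smoothness, Lemma~\ref{rot} to force rotational symmetry, and then read off a solution of (\ref{eq1}) on all of $[0,\pi/2]$, contradicting the hypothesis. The paper organizes the contradiction around the second clause (``not area-minimizing'') rather than the first (``$S_{n,\lambda}(g)<1/n$ for some $g$''), but the chain of implications is identical.

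One point worth flagging: you assert that the smooth rotationally symmetric minimizer is a \emph{single-valued graph} $r=f(\theta)$ with finite $f'$ on $[0,\pi/2]$, and hence a genuine solution of (\ref{eq1}) there. The paper is explicit that this is not automatic --- the minimizer could a priori have $f'=-\infty$ at some interior angle (i.e.\ be tangent to a ray), in which case it would fail to solve (\ref{eq1}) in the classical sense and the contradiction would not close. The paper records this gap in Remark~\ref{rem} and resolves it separately via the change of variables (\ref{eq4})--(\ref{eq5}) and a calibration-style inequality. Your proposal acknowledges graphicality as ``the most delicate step'' but does not supply an argument for it, so at the level of a complete proof you share the same deferred obligation as the paper.
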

\begin{proof}
    Suppose $C(S^{n-1}(\lambda))$ is not area-minimizing in $C(S^n(\lambda))$, then by Proposition \ref{prop1}, the solution $\Sigma$ to Problem \ref{q} is smooth (away from $O$). Furthermore, $\Sigma$ is rotationally symmetric by Lemma \ref{rot}. Hence, the function $f$ representing $\Sigma$ is a minimizer for the functional $S_{n,\lambda}$. And this $f$ is a solution of (\ref{eq1}) defined on the whole interval $[0,\frac{\pi}{2}]$, a contradiction to the assumption. 
\end{proof}
\begin{remark}\label{rem}
    There is only one thing in the proof that we should worry about. That is, $\Sigma$ may be a surface such that $f'=-\infty$ somewhere. This problem is handled under a change of variables in the following part.
\end{remark}

In order to verify Example \ref{main2}, we need further study on the behavior of the ODE (\ref{eq1}).

Let us make the following change of variables:
\begin{equation}\label{eq4}
    \left(\frac{f'}{\sqrt{(f')^2+\lambda^2f^2}}\right)=-\cos H, \quad
    \left(\frac{\lambda f}{\sqrt{(f')^2+\lambda^2f^2}}\right)=\sin H.
\end{equation}
Then the initial second order equation (\ref{eq1}) and (\ref{eq2}) for $f$ becomes a first order equation for $H$:
\begin{equation}\label{eq5}
    H'=n\lambda-(n-1)\tan\theta\cot H,\quad H(0)=\arccos\left(\frac{-A}{\sqrt{A^2+\lambda^2}}\right).
\end{equation}
And by (\ref{eq4}), we have the relation between $f$ and $H$:
\begin{equation}\label{eq6}
    \frac{f'}{f}=-\lambda\cot H.
\end{equation}
We are most interested in the domain:
$$D=\left\{(\theta,H)\in\mathbb{R}^2\mid0<\theta<\frac{\pi}{2},\:0<H<\frac{\pi}{2}\right\},$$
since $f'\le0$ implies $H\le\frac{\pi}{2}$.

First note that, if $\theta\rightarrow0$, $H(\theta)\rightarrow0$, then by Taylor expansion we have:
$$\tan{\theta}\cot{H(\theta)}\sim \frac{1}{H'(0)},$$
and therefore we obtain:
$$H'(0)=n\lambda-\frac{n-1}{H'(0)}.$$
This gives the condition on $n$ and $\lambda$ which appears in Theorem \ref{main2}:
\begin{equation}\label{eq7}
    n\lambda\geq 2\sqrt{n-1}.
\end{equation}
And the analysis above motivates us to consider such a bound in Example \ref{main2}.

Next, assume (\ref{eq7}) holds (hence $n\geq 3$) and consider the line
$$l=\{(\theta,H)\mid H=c\theta\},$$
where $c=\frac{n\lambda+\sqrt{n^2\lambda^2-4(n-1)}}{2}>1$ is the larger root of the equation 
$$c=n\lambda-\frac{n-1}{c}.$$
Using the basic inequality $(c>1)$:
$$\tan c\theta>c\tan\theta,\quad \theta\in\left(0,\frac{\pi}{2}\right),$$
we obtain by the equation (\ref{eq5}) that, on the line $l$ in $D$,
$$H'=n\lambda-(n-1)\tan\theta\cot c\theta>n\lambda-\frac{n-1}{c}=c.$$
This shows that any solution $H$ to (\ref{eq5}) cannot cross $l$ from left to right in the region $D$. As a consequence, any solution $H$ to (\ref{eq5}) with initial data $0<H(0)\le\frac{\pi}{2}$ cannot be extended to $\frac{\pi}{2}$ in $D$. Therefore, we can apply Lemma \ref{lem3} to conclude that $C(S^{n-1}(\lambda))$ is area-minimizing in $C(S^n(\lambda))$, as long as $n\lambda\geq 2\sqrt{n-1}$. 

However, as mentioned in Remark \ref{rem}, we should consider the case $f'=-\infty$, which is equivalent to $H=0$. It may happen that the $H$ corresponding to the minimizing $\Sigma$ turns to $0$ somewhere. If $H(\theta_0)=0$ and $\theta_0>0$, then by (\ref{eq5}) $H'(\theta_0)=-\infty$. Therefore, $\{0<\theta_0<\frac{\pi}{2}\mid H(\theta_0)=0\}$ are isolated points. Since $H$ is induced by the smooth surface $\Sigma$, $H$ is defined on $[0,\frac{\pi}{2}]$. And then we must have $H(0)=0$ according to the argument above. In this case, for any $g$ with $g(0)=1$,
\begin{align*}
    S_{n,\lambda}(g)=&\int_0^{\frac{\pi}{2}}\sqrt{(g')^2+\lambda^2g^2}g^{n-1}\cos^{n-1}\theta d\theta\\
    \geq&\int_0^{\frac{\pi}{2}}(-g'\cos H+\lambda g\sin H)g^{n-1}\cos^{n-1}\theta d\theta\\
    =&-\frac{1}{n}g^n\cos^{n-1}\theta\cos H\bigg|_{0}^{\frac{\pi}{2}}+\frac{1}{n}\int_0^{\frac{\pi}{2}}g^nd(\cos^{n-1}\theta\cos H)\\
    &+\int_0^{\frac{\pi}{2}}\lambda g^n\cos^{n-1}\theta\sin Hd\theta\\
    =&~\frac{1}{n}+\frac{1}{n}\int_0^{\frac{\pi}{2}}g^n\cos^{n-1}\theta\sin H\left(n\lambda-(n-1)\tan\theta\cot H-H' \right)d\theta\\
    =&~\frac{1}{n}.
\end{align*}
Again, we get the conclusion of Lemma \ref{lem3}. And these arguments together complete the proof of the \textit{if} part of Example \ref{main2}.

Finally, we assume that
\begin{equation}\label{eq8}
    n\lambda< 2\sqrt{n-1},
\end{equation}
and we will show that there exists some piecewise smooth $f$ satisfying (\ref{eq2}), and $S_{n,\lambda}(f)<\frac{1}{n}$.

Define \begin{equation*}
    f(\theta)=\left\{
    \begin{aligned}
        &e^{-\mu\theta},\quad &0\le\theta\le\delta,\\
        &\alpha e^{-\lambda g(\theta)},\quad &\delta<\theta\le\frac{\pi}{2}.
    \end{aligned}
    \right
    .
\end{equation*}
where
\begin{equation*}
    \mu=-\frac{\ln\alpha}{\delta},\quad g(\theta)=\int_{\delta}^{\theta}\frac{dt}{\sqrt{\frac{1}{\cos^{2n-2}t}-1}}.
\end{equation*}
And $\delta$, $\alpha\in (0,1)$ are constants to be determined. Note that $f$ is continuous and piecewise smooth, $f(0)=1$, $f(\delta)=\alpha$.
By definition, when $\delta<\theta\le\frac{\pi}{2}$,
$$f'(\theta)=-\lambda g'(\theta)f(\theta)=\frac{-\lambda f(\theta)}{\sqrt{\frac{1}{\cos^{2n-2}\theta}-1}}.$$
That is, 
$$(f')^2+\lambda^2f^2=\frac{(f')^2}{\cos^{2n-2}\theta}.$$
Recall that we are only interested in the case $f'\le0$. So we obtain from (\ref{eq3}) and the difinition of $f$ that
\begin{align*}
    S_{n,\lambda}(f)&=\int_0^{\delta}\sqrt{(f')^2+\lambda^2f^2}f^{n-1}\cos^{n-1}\theta d\theta+\int_{\delta}^{\frac{\pi}{2}}\left(\frac{-f'}{\cos^{n-1}\theta}\right)f^{n-1}\cos^{n-1}\theta d\theta\\
    &=\int_0^{\delta}\sqrt{\mu^2+\lambda^2}e^{-n\mu\theta}\cos^{n-1}\theta d\theta-\int_{\delta}^{\frac{\pi}{2}}f'f^{n-1}d\theta\\
    &\le\sqrt{\mu^2+\lambda^2}\int_0^{\delta}e^{-n\mu\theta}d\theta+\frac{1}{n}\left(f(\delta)^n-f\left(\frac{\pi}{2}\right)^n\right)\\
    &=\frac{\sqrt{\mu^2+\lambda^2}}{n\mu}\left(1-e^{-n\mu\delta}\right)+\frac{1}{n}\alpha^n-\frac{1}{n}f\left(\frac{\pi}{2}\right)^n\\
    &=\frac{1}{n}(1-\alpha^n)\sqrt{1+\frac{\lambda^2\delta^2}{\ln^2\alpha}}+\frac{1}{n}\alpha^n-\frac{1}{n}\alpha^ne^{-n\lambda g(\pi/2)}\\
\end{align*}
Since
\begin{align*}
    g\left(\frac{\pi}{2}\right)&=\int_{\delta}^{\frac{\pi}{2}}\frac{dt}{\sqrt{\frac{1}{\cos^{2n-2}t}-1}}\\
    &=\int_{\delta}^{\frac{\pi}{2}}\frac{dt}{\sqrt{\frac{1}{\cos^2t}-1} \sqrt{\frac{1}{\cos^{2n-4}t}+\dots+1}}\\
    &\le \int_{\delta}^{\frac{\pi}{2}}\frac{\cos t\:dt}{\sqrt{n-1}\sin t}\\
    &=\frac{-\ln\sin\delta}{\sqrt{n-1}},
\end{align*}
we obtain
$$S_{n,\lambda}(f)\le\frac{1}{n}(1-\alpha^n)\sqrt{1+\frac{\lambda^2\delta^2}{\ln^2\alpha}}+\frac{1}{n}\alpha^n-\frac{1}{n}\alpha^n(\sin\delta)^{\frac{n\lambda}{\sqrt{n-1}}}.$$
Now fix $\alpha<1$, then
\begin{align*}
    S_{n,\lambda}(f)&\le\frac{1}{n}(1-\alpha^n)\left(1+\frac{\lambda^2\delta^2}{2\ln^2\alpha}+o(\delta^2)\right)+\frac{1}{n}\alpha^n-\frac{1}{n}\alpha^n(\sin\delta)^{\frac{n\lambda}{\sqrt{n-1}}}\\
    &=\frac{1}{n}+\frac{\lambda^2(1-\alpha^n)}{2n\ln^2\alpha}\delta^2-\frac{1}{n}\alpha^n(\sin\delta)^{\frac{n\lambda}{\sqrt{n-1}}}+o(\delta^2).
\end{align*}
We see that if $\frac{n\lambda}{\sqrt{n-1}}<2$, then we can choose sufficiently small $\delta$ such that $S_{n,\lambda}(f)<\frac{1}{n}$. Thus, there exists a rotationally symmetric piecewise smooth surface $\Sigma$ corresponding to $f$ such that $\partial\Sigma=\partial C(S^{n-1}(\lambda))$ but $\text{Area}(\Sigma)<\text{Area}(C(S^{n-1}(\lambda)))$. Consequently, $C(S^{n-1}(\lambda))$ is not area-minimizing in $C(S^n(\lambda))$. This completes the proof of the \textit{only if} part of Example \ref{main2}.

\section*{Appendix}
\setcounter{equation}{0}
\renewcommand\theequation{\arabic{equation}}

In the first part, we compute the curvatures of a cone and its cross section.

Let $(C(M^n),g)$ be a cone with vertex $O$ and cross section $M^n$, where
$$g=dt^2+t^2g_M.$$
And let $x^i$ be coordinates on $M$ such that
$$g_M=h_{ij}dx^idx^j.$$
Denote
$$e_i=\frac{\partial}{\partial x^i},\quad e_t=\frac{\partial}{\partial t},\quad \nabla=\nabla^{M},\quad \overline{\nabla}=\nabla^{C(M)}.$$
Then by direct computation, the only nonzero Christoffel symbols are:
\begin{align*}
    &\overline{\Gamma}_{ij}^k=\frac{1}{2}g^{kl}(\partial_ig_{jl}+\partial_jg_{il}-\partial_lg_{ij})=\Gamma_{ij}^k,\\
    &\overline{\Gamma}_{it}^j=\frac{1}{2}g^{jl}(\partial_ig_{tl}+\partial_tg_{il}-\partial_lg_{it})=\frac{1}{2t^2}h^{jl}(2t\,h_{il})=\frac{1}{t}\delta_{ij},\\
    &\overline{\Gamma}_{ij}^t=-\frac{1}{2}\partial_tg_{ij}=-t\,h_{ij}.
\end{align*}
Therefore we have the rules for computing the connection:
\begin{align*}
    &\overline{\nabla}_{e_i}e_j=\Gamma_{ij}^ke_k-t\,h_{ij}e_t,\\ &\overline{\nabla}_{e_i}e_t=\overline{\nabla}_{e_t}e_i=\frac{1}{t}e_i,\\
    &\overline{\nabla}_{e_t}e_t=0.
\end{align*}
And the curvatures are:
\begin{align*}
    \overline{R}_{ijji}&=\langle \overline{\nabla}_{e_i}\overline{\nabla}_{e_j}e_j,e_i \rangle-\langle \overline{\nabla}_{e_j}\overline{\nabla}_{e_i}e_j,e_i \rangle\\
    &=\langle \overline{\nabla}_{e_i}(\Gamma_{jj}^ke_k-t\,h_{jj}e_t),e_i \rangle-\langle \overline{\nabla}_{e_j}(\Gamma_{ij}^ke_k-t\,h_{ij}e_t),e_i \rangle\\
    &=(\partial_i\Gamma_{jj}^k)g_{ik}+\Gamma_{jj}^k\Gamma_{ik}^lg_{il}-t\,h_{jj}\frac{1}{t}g_{ii}-(\partial_j\Gamma_{ij}^k)g_{jk}-\Gamma_{ij}^k\Gamma_{jk}^lg_{il}+t\,h_{ij}\frac{1}{t}g_{ij}\\
    &=t^2R_{ijji}-t^2(h_{ii}h_{jj}-h_{ij}^2),
\end{align*}
\begin{align*}
    \overline{R}_{itti}=\langle \overline{\nabla}_{e_i}\overline{\nabla}_{e_t}e_t,e_i \rangle-\langle \overline{\nabla}_{e_t}\overline{\nabla}_{e_i}e_t,e_i \rangle=-\langle \overline{\nabla}_{e_t}(\frac{1}{t}e_i),e_i \rangle=\frac{1}{t^2}g_{ii}-\frac{1}{t^2}g_{ii}=0.
\end{align*}
\begin{equation}\label{sec}
    \overline{K}(e_i,e_j)=\frac{K(e_i,e_j)-1}{t^2},\quad \overline{K}(e_i,e_t)=0.
\end{equation}
\begin{equation}\label{Ric}
    \overline{\text{Ric}}(e_i,e_i)=\frac{\text{Ric}(e_i,e_i)-(n-1)}{t^2},\quad \overline{\text{Ric}}(e_t,e_t)=0.
\end{equation}

Next, we find out the relation between a cone and its cross section.

Let $\Sigma^{n-1}$ be an oriented embedded minimal hypersurface in $(M,g_M)$. And $C(\Sigma)$ is the cone whose cross section is $\Sigma$. Then we have: (see also \cite{Simons1968})
\begin{pp}\label{lem1}
    $C(\Sigma)$ is minimal in $C(M)$ iff $\Sigma$ is minimal in $M$.
\end{pp}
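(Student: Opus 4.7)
The plan is to compute the mean curvature vector of $C(\Sigma)$ in $C(M)$ directly in terms of the mean curvature of $\Sigma$ in $M$, using the connection formulas already derived in the appendix. Since the appendix provides $\overline{\nabla}_{e_i}e_j = \Gamma_{ij}^k e_k - t\,h_{ij}e_t$, $\overline{\nabla}_{e_i}e_t = \overline{\nabla}_{e_t}e_i = \tfrac{1}{t}e_i$, and $\overline{\nabla}_{e_t}e_t = 0$, all the machinery is in place.

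First, I would fix $x \in \Sigma$ and choose an orthonormal frame $\{E_1,\dots,E_{n-1}\}$ for $T_x\Sigma$ in $(M, g_M)$, with $\nu$ the unit normal to $\Sigma$ in $M$. Extend $E_i$ and $\nu$ to be independent of $t$ on $C(M)$. Since the metric $g = dt^2 + t^2 g_M$ rescales the $M$-direction by $t^2$, the vectors $f_i := \tfrac{1}{t}E_i$ and $\bar{\nu} := \tfrac{1}{t}\nu$ are unit with respect to $g$. Moreover $\{e_t, f_1,\dots,f_{n-1}\}$ is an orthonormal frame for $T C(\Sigma)$ in $C(M)$, and $\bar{\nu}$ is a unit normal to $C(\Sigma)$ in $C(M)$ (since $\nu \perp T\Sigma$ and $\nu \perp e_t$).

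Next, I would compute $\overline{\nabla}_{f_i}f_i$. Using the product rule and the fact that $E_i$ is $t$-independent and horizontal, one finds $\overline{\nabla}_{f_i}f_i = \tfrac{1}{t^2}\overline{\nabla}_{E_i}E_i$. Applying the appendix formulas (extended from coordinate frames to orthonormal frames by tensoriality in the lower indices) gives $\overline{\nabla}_{E_i}E_i = \nabla^M_{E_i}E_i - t\,e_t$. Decomposing $\nabla^M_{E_i}E_i = \nabla^{\Sigma}_{E_i}E_i + A_{\Sigma}(E_i,E_i)\nu$, where $A_\Sigma$ is the scalar second fundamental form of $\Sigma$ in $M$, the piece $\nabla^\Sigma_{E_i}E_i$ is tangent to $\Sigma \subset C(\Sigma)$ and $-t\,e_t$ is tangent to $C(\Sigma)$. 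Thus
\begin{equation*}
(\overline{\nabla}_{f_i}f_i)^{\perp} \;=\; \frac{A_{\Sigma}(E_i,E_i)}{t^2}\,\nu \;=\; \frac{A_{\Sigma}(E_i,E_i)}{t}\,\bar{\nu}.
\end{equation*}
Combined with $(\overline{\nabla}_{e_t}e_t)^{\perp} = 0$, summing yields the mean curvature vector
\begin{equation*}
\vec{H}_{C(\Sigma)} \;=\; \sum_{i=1}^{n-1}(\overline{\nabla}_{f_i}f_i)^{\perp} + (\overline{\nabla}_{e_t}e_t)^{\perp} \;=\; \frac{H_{\Sigma}}{t}\,\bar{\nu},
\end{equation*}
where $H_\Sigma = \sum_i A_\Sigma(E_i,E_i)$ is the mean curvature of $\Sigma$ in $M$.

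Since $t > 0$ on the regular part of $C(\Sigma)$, this shows $\vec H_{C(\Sigma)} \equiv 0$ if and only if $H_\Sigma \equiv 0$, proving the equivalence. The only step that requires a bit of care is the frame-change computation $\overline{\nabla}_{f_i}f_i = \tfrac{1}{t^2}\overline{\nabla}_{E_i}E_i$ and the correct identification of which terms are tangential to $C(\Sigma)$; everything else is bookkeeping with the connection formulas already recorded in the appendix.
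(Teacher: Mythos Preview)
Your proof is correct and follows essentially the same approach as the paper's: both use the appendix connection formulas to show that for horizontal fields $X,Y$ one has $\overline{\nabla}_{\overline X}\overline Y=\nabla_X Y - t\langle X,Y\rangle_M\,e_t$, so the radial piece is tangent to $C(\Sigma)$ and the normal components agree. Your version is more explicit than the paper's in that you normalize the frame to $\{e_t,\tfrac{1}{t}E_i\}$ and actually compute $\vec H_{C(\Sigma)}=\tfrac{H_\Sigma}{t}\bar\nu$, whereas the paper simply notes $(\overline\nabla_{\overline E_i}\overline E_i)^N=(\nabla_{E_i}E_i)^N$ and appeals to the definition of mean curvature; the extra bookkeeping you do is harmless and arguably clearer.
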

\begin{proof}
    Using the rules for connections, we can check the following holds:
    $$\overline{\nabla}_{\overline{X}}\overline{Y}=\nabla_XY-t\langle X,Y\rangle_{M}\frac{\partial}{\partial t},$$
    for all vector fields $X$ and $Y$ on $M$. Therefore, if $\{E_i\}$ is a basis of $\Sigma$, then the normal components are the same:
    $(\overline{\nabla}_{\overline{E_i}}\overline{E_i})^N=(\nabla_{E_i}E_i)^N$. And the conclusion follows by the definition of the mean curvature.
\end{proof}

In the end, we prove the claim in Proposition \ref{prop1}.
\begin{claim}
    Let $2\le n\le 6$, $0<\lambda<1$, and let $C(\Sigma^{n-1})$ be a minimal cone in $C(S^n(\lambda))$. If $C(\Sigma)$ is not totally geodesic, $i.e.$, $C(\Sigma)\neq C(S^{n-1}(\lambda))$, then $C(\Sigma)$ is unstable in $C(S^n(\lambda))$.
\end{claim}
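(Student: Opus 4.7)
The plan is to follow the classical Simons-type instability argument for minimal hypercones, adapted to account for the positive Ricci contribution coming from the ambient cone $C(S^n(\lambda))$ with $\lambda<1$.

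First, I would compute the two geometric quantities appearing in the stability inequality for $C(\Sigma)^n\subset C(S^n(\lambda))^{n+1}$. Writing $\nu$ for a unit normal of $C(\Sigma)$, the fact that $C(\Sigma)$ is a cone (so it contains the radial direction $\partial_t$) forces $\nu\perp\partial_t$, hence $\nu$ lies entirely in the cross-section direction. Using the Christoffel symbols and curvature formulas from the first part of the Appendix, together with $\mathrm{Ric}_{S^n(\lambda)}=\frac{n-1}{\lambda^2}g_{S^n(\lambda)}$, direct computation gives
$$|A_{C(\Sigma)}|^2=\frac{|A_\Sigma|^2}{t^2},\qquad \overline{\mathrm{Ric}}(\nu,\nu)=\frac{(n-1)(1-\lambda^2)}{\lambda^2 t^2},$$
where $A_\Sigma$ is the second fundamental form of $\Sigma^{n-1}$ in $S^n(\lambda)$.

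Next I would test the stability inequality with separated functions $\eta(t,x)=F(t)\phi(x)$, with $F\in C_c^\infty(0,\infty)$ and $\phi\in C^\infty(\Sigma)$. Using the volume form $dV_{C(\Sigma)}=t^{n-1}dt\,dV_\Sigma$ and $|\nabla\eta|^2=F'(t)^2\phi^2+F(t)^2|\nabla_\Sigma\phi|^2/t^2$, stability separates into
$$\Big(\!\int_0^\infty\! F^2 t^{n-3}dt\Big)\!\int_\Sigma\!(|A_\Sigma|^2+C_\lambda)\phi^2\le \Big(\!\int_0^\infty\! F'^2 t^{n-1}dt\Big)\!\int_\Sigma\!\phi^2+\Big(\!\int_0^\infty\! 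F^2 t^{n-3}dt\Big)\!\int_\Sigma\!|\nabla_\Sigma\phi|^2,$$
where $C_\lambda:=(n-1)(1-\lambda^2)/\lambda^2\ge 0$. The sharp weighted Hardy inequality gives $\inf_F \int F'^2 t^{n-1}/\int F^2 t^{n-3}=(n-2)^2/4$, approached by cutoff modifications of $t^{-(n-2)/2}$. Dividing by the common $t$-integral and taking the infimum produces the effective Rayleigh-type inequality on $\Sigma$:
$$\int_\Sigma(|A_\Sigma|^2+C_\lambda)\phi^2\le\frac{(n-2)^2}{4}\int_\Sigma\phi^2+\int_\Sigma|\nabla_\Sigma\phi|^2,\qquad \forall\,\phi\in C^\infty(\Sigma).$$

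Finally I would plug $\phi=|A_\Sigma|$ into this inequality, integrate by parts, and invoke Simons' identity for a minimal hypersurface $\Sigma^{n-1}$ in a space form of curvature $1/\lambda^2$,
$$\tfrac12\Delta|A_\Sigma|^2=|\nabla A_\Sigma|^2+\tfrac{n-1}{\lambda^2}|A_\Sigma|^2-|A_\Sigma|^4,$$
together with Kato's inequality $|\nabla A_\Sigma|^2\ge|\nabla|A_\Sigma||^2$, which yields $\int_\Sigma|\nabla|A_\Sigma||^2\le\int_\Sigma|A_\Sigma|^4-\frac{n-1}{\lambda^2}\int_\Sigma|A_\Sigma|^2$. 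After substitution the quartic terms cancel and we are left with
$$\Big(C_\lambda+\tfrac{n-1}{\lambda^2}\Big)\int_\Sigma|A_\Sigma|^2\le\tfrac{(n-2)^2}{4}\int_\Sigma|A_\Sigma|^2.$$
A short arithmetic check shows $C_\lambda+(n-1)/\lambda^2=(n-1)(2-\lambda^2)/\lambda^2\ge n-1$ for $\lambda\le 1$, while $(n-2)^2/4<n-1$ precisely when $2\le n\le 6$ (the boundary $n=7$ already gives $25/4>6$). Hence $|A_\Sigma|\equiv 0$, which forces $\Sigma$ to be a totally geodesic $(n-1)$-sphere in $S^n(\lambda)$, so $C(\Sigma)=C(S^{n-1}(\lambda))$ up to an ambient isometry, contradicting the hypothesis. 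The point I expect to require the most care is the reduction from the separated stability inequality on the cone to the Rayleigh-type inequality on $\Sigma$: one must fix $\phi$, choose a Hardy-minimizing sequence $F_k$ of compactly supported cutoffs, and verify that the $t$-integral factors legitimately cancel. Once that is in hand, the remaining step is the arithmetic comparison, and it is exactly the positivity of $C_\lambda$ (coming from $\lambda<1$) which prevents any shrinkage of the classical range $n\le 6$.
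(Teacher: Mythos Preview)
Your argument is correct. It differs from the paper's proof in that you essentially re-derive Simons' rigidity from scratch on $\Sigma\subset S^n(\lambda)$: separate variables, saturate the weighted Hardy inequality in $t$ to obtain the Rayleigh-type inequality on the link, and then feed in $\phi=|A_\Sigma|$ together with Simons' identity and Kato to force $|A_\Sigma|\equiv 0$ when $2\le n\le 6$. The paper instead performs a rescaling $\Sigma'=\lambda^{-1}\Sigma\subset S^n(1)$, observes that $|A_\Sigma|^2=\lambda^{-2}|A_{\Sigma'}|^2$ and that the gradient term scales by at most $\lambda^{-2}$, and simply drops the strictly positive Ricci term $(n-1)(1-\lambda^2)/(\lambda^2 t^2)$; this shows that stability of $C(\Sigma)$ in $C(S^n(\lambda))$ implies the \emph{Euclidean} stability inequality for $C(\Sigma')\subset\mathbb{R}^{n+1}$, after which the classical Simons theorem is quoted as a black box. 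Your approach is more self-contained and makes the role of the extra term $C_\lambda>0$ explicit (it gives slack but does not enlarge the range beyond $n\le 6$); the paper's approach is shorter because it outsources the hard part to the known result. One minor point: the test function $|A_\Sigma|$ is only Lipschitz where $A_\Sigma$ vanishes, so in a fully rigorous write-up you should regularize via $\phi_\epsilon=(|A_\Sigma|^2+\epsilon)^{1/2}$ and pass to the limit, which is entirely standard.
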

\begin{proof}
    We can view $\Sigma^{n-1}$ as a closed minimal hypersurface in $S^n(\lambda)$ by the proposition above.
    
    Suppose on the contrary that $C(\Sigma)$ is stable. Then combining with (\ref{Ric}), the stability inequality yields:
    $$\int_0^{\infty}dt\int_{\Sigma_t}\left(\frac{\text{Ric}_{S^{n}(\lambda)}(\nu,\nu)-(n-1)}{t^2}+\lvert\overline{A}\rvert^2\right)\eta^2\le\int_{C(\Sigma)}\lvert\nabla^{C(\Sigma)}\eta\rvert^2,\quad \forall \eta \in C_0^{\infty}(C(\Sigma)). $$
    Here $\nu$ is the unit normal vector, $\overline{A}$ is the second fundamental form of $C(\Sigma)$ in $C(S^n(\lambda))$. 
    
    Besides, direct computation shows that:
    $$\text{Ric}_{S^{n}(\lambda)}(\nu,\nu)=\frac{n-1}{\lambda^2},\quad \lvert\overline{A}\rvert^2=\frac{\lvert A\rvert^2}{t^2},$$
    where $A$ is the second fundamental form of $\Sigma$ in $S^n(\lambda)$.
    
    Now we construct a new cone $C(\Sigma')$ embedded in the Euclidean space $C(S^n(1))=\mathbb{R}^{n+1}$, where $\Sigma'=\lambda^{-1}\Sigma$ is the scaling of $\Sigma$. Then obviously, $\Sigma'$ is minimal in $S^n(1)$, and the second fundamental form of $\Sigma'$ in $S^n(1)$ satisfies:
    $$\lvert A\rvert^2=\frac{\lvert A'\rvert^2}{\lambda^2}.$$
    Putting all these formulas together, we obtain:
    $$\int_0^{\infty}dt\int_{\Sigma_t}\frac{(n-1)(1-\lambda^2)+\lvert A'\rvert^2}{\lambda^2t^2}\eta^2\le\int_0^{\infty}dt\int_{\Sigma_t}\lvert\nabla^{C(\Sigma)}\eta\rvert^2,\quad \forall \eta \in C_0^{\infty}(C(\Sigma)). $$
    By the relationship between $\Sigma'$ and $\Sigma$,
    we have:
    $$\int_0^{\infty}dt\int_{\Sigma'_t}\frac{(n-1)(1-\lambda^2)+\lvert A'\rvert^2}{\lambda^2t^2}\lambda^{n-1}\phi^2\le\int_0^{\infty}dt\int_{\Sigma'_t}\frac{\lvert\nabla^{C(\Sigma')}\phi\rvert^2}{\lambda^2}\lambda^{n-1},$$
    where $\phi$ is the corresponding function on $C(\Sigma')$. And we use 
    $$\lvert\nabla^{C(\Sigma)}\eta\rvert^2\le\frac{\lvert\nabla^{C(\Sigma')}\phi\rvert^2}{\lambda^2},$$
    which can be checked directly by choosing an orthonormal basis on $\Sigma$.

    As a consequence, we have
    $$\int_0^{\infty}dt\int_{\Sigma'_t}\frac{\lvert A'\rvert^2}{t^2}\phi^2\le\int_0^{\infty}dt\int_{\Sigma'_t}\lvert\nabla^{C(\Sigma')}\phi\rvert^2,\quad \forall \in C_0^{\infty}(C(\Sigma)).$$
    Note that this is exactly the stablility inequality for $C(\Sigma')$ in $\mathbb{R}^{n+1}$. Therefore, by classical theory for minimal cones in Euclidean spaces (\cite{Giusti1984},\cite{Simon1983},\cite{Yau1975}), we know that $\Sigma'$ must be the totally geodesic $S^{n-1}(1)$ in $S^n(1)$. Thus, $C(\Sigma)$ must be the totally geodesic $C(S^{n-1}(\lambda))$, a contradiction.
\end{proof}

\section*{Acknowledgement}
The author is grateful to Prof. Gang Tian for his suggestions and encouragement. The author also thanks Prof. Sergey Kryzhevich, Shengxuan Zhou and Yaoting Gui for discussions on related problems. The author is supported by National Key R\&D Program of China 2020YFA0712800.

\bibliographystyle{plain}
\bibliography{Reference}

\end{document}